\newcommand{\cM}{\mathcal{M}}
\newcommand{\cO}{\mathcal{O}}
\newtheorem{prop}{Proposition}[section]
\newtheorem{lem}[prop]{Lemma}
\newtheorem{thm}[prop]{Theorem}
\newtheorem{theorem}[prop]{Theorem}
\newtheorem{cor}[prop]{Corollary}
\theoremstyle{definition}
\newtheorem{defi}[prop]{Definition}
\newtheorem{df}[prop]{Definition}
\newtheorem{examp}[prop]{Example}
\newtheorem{remar}[prop]{Remark}
\newtheorem{remark}[prop]{Remark}
\DeclareMathOperator{\That}{\widehat{T}}
\def\reg{\mathrm{reg}}
\def\rig{\mathrm{rig}}
\def\tri{\mathrm{tri}}
\DeclareMathOperator{\Spf}{Spf}
\DeclareMathAlphabet{\mathpzc}{OT1}{pzc}{m}{it}
\DeclareMathOperator{\Aut}{Aut}
\DeclareMathOperator{\End}{End}
\DeclareMathOperator{\Hom}{Hom}
\DeclareMathOperator{\Ind}{Ind}
\DeclareMathOperator{\cInd}{c-Ind}
\DeclareMathOperator{\Res}{Res}
\DeclareMathOperator{\GL}{GL}
\DeclareMathOperator{\Ker}{Ker}
\DeclareMathOperator{\Coker}{Coker}
\DeclareMathOperator{\WD}{WD}
\DeclareMathOperator{\Gal}{Gal}
\def\rank{\mathop{\mathrm{ rank}}\nolimits}
\DeclareMathOperator{\tr}{tr}
\DeclareMathOperator{\gr}{gr}
\DeclareMathOperator{\Spec}{Spec}
\DeclareMathOperator{\mSpec}{m-Spec}
\DeclareMathOperator{\codim}{codim}
\DeclareMathOperator{\supp}{Supp}
\DeclareMathOperator{\Mod}{Mod}
\DeclareMathOperator{\Alg}{Alg}
\DeclareMathOperator{\Ext}{Ext}
\newcommand{\cIndu}[3]{\cInd_{#1}^{#2}{#3}}
\newcommand{\Indu}[3]{\Ind_{#1}^{#2}{#3}}
\newcommand{\pF}{\mathfrak{p}_F}
\newcommand{\Q}{\mathbb{Q}}
\newcommand{\Qp}{\mathbb {Q}_p}
\newcommand{\Zp}{\mathbb{Z}_p}
\newcommand{\Qpbar}{\overline{\mathbb{Q}}_p}
\newcommand{\Qbar}{\overline{\mathbb{Q}}_p}
\newcommand{\ZZ}{\mathbb Z}
\newcommand{\QQ}{\mathbb Q}
\newcommand{\Aa}{\mathfrak A}
\newcommand{\Pp}{\mathfrak P}
\newcommand{\mm}{\mathfrak m}
\newcommand{\OO}{\mathcal O}
\newcommand{\TT}{\mathbb T}
\DeclareMathOperator{\wtimes}{\widehat{\otimes}}
\newcommand{\nn}{\mathfrak n}
\newcommand{\pp}{\mathfrak p}
\newcommand{\br}[1]{\llbracket #1\rrbracket}
\newcommand{\qq}{\mathfrak{q}}
\newcommand{\ana}{\mathrm{an}}
\newcommand{\sm}{\mathrm{sm}}
\newcommand{\pro}{\mathrm{pro}}
\newcommand{\alg}{\mathrm{alg}}
\newcommand{\cont}{\mathrm{cont}}
\newcommand{\UU}{\mathrm U}
\newcommand{\Lbar}{\overline{L}}
\newcommand{\rhobar}{\bar{\rho}}
\newcommand{\wt}{\mathrm{wt}}
\newcommand{\univ}{\mathrm{univ}}
\newcommand{\loc}{\mathrm{loc}}
\newcommand{\pst}{\mathrm{pst}}
\title[On the density of supercuspidal points]{On the density of supercuspidal points of fixed regular weight in  local deformation rings 
and global Hecke algebras}
\author{Matthew Emerton and Vytautas Pa\v{s}k\={u}nas}
\thanks{The first author is supported in part by the
		  NSF grants DMS-1303450 and DMS-1601871. The second author is supported in part by the DFG, SFB/TR45.}
\date{\today.}
\begin{document} 
\maketitle

\begin{abstract} We study the Zariski closure in the deformation space
of a local Galois representation of the closed points corresponding to potentially semi-stable representations 
 with prescribed $p$-adic Hodge-theoretic properties. We show in favourable cases that the closure  contains a union of irreducible components of the deformation space. We also study an analogous question for global Hecke algebras. 
\end{abstract}

\section{Introduction} In this paper we study the Zariski closure in the deformation space
of a local Galois representation of the closed points corresponding to potentially semi-stable representations with prescribed properties. We show in a number of cases that the closure contains  a union of irreducible components of the deformation space. Our results are most novel in the case when the family consists of potentially semi-stable representations with fixed  Hodge--Tate weights and their Weil--Deligne representations correspond to  supercuspidal representations via the classical local Langlands correspondence; we call such points supercuspidal. We also obtain similar results in the setting of global Hecke algebras, and thus,
in certain circumstances, namely those to which the results of \cite{HMS} apply, in global
Galois deformation rings as well.

Let us explain our results in more detail. Let $F$ be a finite extension of $\Qp$ and let $G_F$ be its absolute Galois group. Let $L$ be a 
further finite extension of $\Qp$ with the ring of integers $\OO$, a uniformizer $\varpi$ and residue field $k$. Let $\rhobar: G_F \rightarrow \GL_n(k)$ be a continuous representation
 and let $\rho^\square: G_F\rightarrow \GL_n(R^\square)$ be  the universal framed 
 deformation of $\rhobar$. In particular, $R^\square$ is a complete local noetherian
  $\OO$-algebra with residue field $k$. If $x$ is a maximal ideal of $R^\square [1/p]$ then 
  its residue field $\kappa(x)$ is a finite extension of $L$; we denote its ring of integers by $\OO_{\kappa(x)}$. By specializing $\rho^{\square}$ at 
  $x$ we obtain a representation $\rho_x: G_F\rightarrow \GL_n(\kappa(x))$. Moreover, 
  its image is contained in $\GL_n(\OO_{\kappa(x)})$ and by reducing the matrix entries modulo
  the maximal ideal of $\OO_{\kappa(x)}$ we obtain $\rhobar$. If $\rho_x$ is potentially semi-stable then we can associate to it $p$-adic Hodge theoretic data: a multiset of integers   
  $\wt(\rho_x)$ called the Hodge--Tate weights and Weil--Deligne representation 
  $\WD(\rho_x)$, which is a representation of $W_F$ (the Weil group of $F$)
  together with a nilpotent operator satisfying certain compatibilities. The following theorem is a representative  example of our results. 
  
  \begin{thm}\label{A} Assume that $p$ does not divide $2n$ and $\rhobar$ admits a potentially crystalline lift of regular weight, which is 
  potentially diagonalisable. Let $E$ be an extension of $F$ of degree $n$, which is either unramified or totally ramified,  and fix regular Hodge--Tate weights $\underline{k}$.
  Let $\Sigma$ be the subset of maximal ideals of $R^\square [1/p]$ consisting of $x\in \mSpec R^\square [1/p]$, such that $\rho_x$ is potentially semi-stable, $\wt(\rho_x)= \underline{k}$, 
  $\WD(\rho_x)$ is irreducible as a representation of $W_F$ and is isomorphic to an induction from $W_E$ of a $1$-dimensional representation. Then the closure of $\Sigma$ in $\Spec R^{\square}$  contains a non-empty 
 union of irreducible components of $\Spec R^{\square}$. 
  \end{thm}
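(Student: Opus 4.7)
The strategy combines a local density argument within potentially crystalline deformation rings of appropriate inertial type, together with a globalization-and-patching argument leveraging the potentially diagonalisable lift hypothesis, to exhibit a non-empty union of irreducible components of $\Spec R^\square$ inside the closure of $\Sigma$.

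For each smooth character $\chi$ of $W_E$ with suitable inertial restriction, consider Kisin's potentially crystalline framed deformation ring $R^{\square,\underline{k},\tau,\mathrm{cr}}$, where $\tau = (\Ind_{W_E}^{W_F}\chi)|_{I_F}$; it is $\OO$-flat, reduced, and its rigid generic fiber is smooth over $L$. Because $E/F$ is unramified or totally ramified of degree $n$, for generic $\chi$ the $\Gal(E/F)$-orbit of $\chi|_{I_E}$ has size $n$, so $\rho_x|_{W_E}$ decomposes generically as a direct sum of distinct characters permuted transitively by $\Gal(E/F)$, forcing $\WD(\rho_x)\cong \Ind_{W_E}^{W_F}\chi'$ irreducibly for some character $\chi'$ of $W_E$. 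The locus of closed points where this fails (i.e., where some nontrivial element of $\Gal(E/F)$ fixes the character, or the lift of $\chi|_{I_E}$ to $W_E$ pairs up) is cut out by a proper closed subset. Hence $\Sigma \cap \mSpec R^{\square,\underline{k},\tau,\mathrm{cr}}[1/p]$ is Zariski dense in $\Spec R^{\square,\underline{k},\tau,\mathrm{cr}}$ whenever the latter is non-empty.

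These rings, however, are quotients of $R^\square$ of strictly smaller Krull dimension, so density on each one alone is not enough: we must vary $\chi$ over an infinite family and prove the union is Zariski dense in a union of full irreducible components of $\Spec R^\square$. To this end, globalize $\rhobar$ using the potentially diagonalisable lift hypothesis in the style of Barnet-Lamb-Gee-Geraghty-Thorne, and apply the Taylor-Wiles-Kisin patching method (in the form developed by Caraiani-Emerton-Gee-Geraghty-Shin-Thorne) to produce a patched module $M_\infty$ over a patched deformation ring $R_\infty$ whose support descends to a non-empty union of irreducible components of $\Spec R^\square$ (the ``automorphic'' components). For each admissible $\tau$, the localized module $M_\infty(\sigma(\tau)\otimes W_{\underline{k}})$, with $\sigma(\tau)$ the Bushnell-Kutzko type for $\tau$ and $W_{\underline{k}}$ the algebraic representation of weight $\underline{k}$, is supported exactly on $R^{\square,\underline{k},\tau,\mathrm{cr}}$ within the automorphic components. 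Provided this localized module is nonzero on every automorphic component, classical-point density on the associated eigenvariety then forces the union $\bigcup_\chi R^{\square,\underline{k},\tau,\mathrm{cr}}$ to be Zariski dense in the automorphic components, and combining this with the first step yields the theorem.

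The main obstacle is the automorphic input: showing that $M_\infty(\sigma(\tau)\otimes W_{\underline{k}})$ is nonzero on every automorphic component for enough $\tau$ of the prescribed induced form. This is where the arithmetic restrictions enter—the assumption $p\nmid 2n$ makes the standard Taylor-Wiles-Kisin patching applicable, while the restriction that $E/F$ is unramified or totally ramified of degree exactly $n$ guarantees a clean theory of Bushnell-Kutzko types for the inertial types produced by inductions from $W_E$, which in turn allows the existence of automorphic representations on the globalizing unitary group with supercuspidal local components at $v\mid p$ of the required shape.
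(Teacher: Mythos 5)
Your outline has the right scaffolding (globalize, patch, study $M_\infty$ and the potentially crystalline deformation rings $R^{\square,\underline{k},\tau,\mathrm{cr}}$, descend to $\Spec R^\square$), but two of its load-bearing steps are, respectively, assumed without proof and substituted by an argument that does not apply.

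First, you assert that the support of $M_\infty$ ``descends to a non-empty union of irreducible components of $\Spec R^\square$.'' This is exactly the content of the paper's Theorem~\ref{B}/Theorem~\ref{thm:support} (the infinite fern argument of Section~\ref{support}, built on the patched eigenvariety of Breuil--Hellmann--Schraen). It is a genuinely hard theorem, not a formal consequence of the patching construction: {\em a priori} one only knows that $\supp M_\infty$ is a closed subset of maximal dimension, not that it is a union of components. Treating this as known conflates the input of the theorem with its conclusion.

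Second, and more fundamentally, you invoke ``classical-point density on the associated eigenvariety'' to conclude that $\bigcup_\chi R^{\square,\underline{k},\tau,\mathrm{cr}}$ is Zariski dense in the automorphic components. This does not work: the classical points of the patched eigenvariety (equivalently, the finite-slope classical points furnished by the locally analytic Jacquet module) are by design crystabelline/principal-series points, because they arise from $B$-finite-slope vectors. Supercuspidal points contribute nothing to $J_B$ and thus are invisible to this argument. The eigenvariety density is what the paper uses to show the support of $M_\infty$ is a union of irreducible components -- not to produce supercuspidal density. For supercuspidal density the paper develops a separate, representation-theoretic tool: the ``capture'' theory of Section~\ref{sec_capture}, which says that a well-chosen family of Bushnell--Kutzko supercuspidal types (Proposition~\ref{capture_super}) captures $M_\infty$ as an $\OO\br{K}$-module, and Proposition~\ref{dense_cap} then converts this into density of the supercuspidal locus inside $\supp M_\infty$. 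The key point you are missing is that this step is purely local at $K$ and has nothing to do with eigenvarieties or finite slope; it is driven by a dimension/coinvariant-growth count (Proposition~\ref{criterion}, Proposition~\ref{growth}) or, alternatively, by Scholze's argument in Section~\ref{sec_scholze}.

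Your first step (density of the supercuspidal locus inside a fixed $R^{\square,\underline{k},\tau,\mathrm{cr}}$) and your final descent step are fine in spirit, and the descent from $R_\infty$ to $R^\square$ does indeed go through by a flatness/going-down argument as in Lemma~\ref{going_down}. But without the capture mechanism the ``union over $\chi$'' step remains a genuine gap: each $R^{\square,\underline{k},\tau,\mathrm{cr}}$ has strictly smaller Krull dimension than $R^\square$, so some new input is needed to promote a union of thin loci to something dense in a full component, and the eigenvariety density you cite does not supply that input for supercuspidal $\tau$.
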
 
  
  \begin{remar} If $\Hom_{G_F}(\rhobar, \rhobar(1))=0$ then the deformation problem of $\rhobar$ is unobstructed and thus $R^{\square}$ is formally smooth over $\OO$, so the theorem
  implies the density of $\Sigma$ in the whole of $\Spec R^{\square}$. 
  \end{remar}
  
  \begin{remar} The proof of the theorem uses a global input: the patched module $M_\infty$ constructed in 
  \cite{six-authors}. In the process we identify $G_F$ with a decomposition group at a prime $\pp$ of an absolute Galois group of a totally real field $K$, and $\rhobar$ as a restriction to $G_F$ of a global automorphic representation of $G_K$. In the general case, we obtain precisely those components of $\Spec R^\square$ which contain a closed point corresponding to the restriction to $G_F$ 
  of a global automorphic representation of the kind considered in \cite[\S 2.4]{six-authors} which is crystalline at $\pp$.
  \end{remar} 

\begin{remar} The assumption $p  \nmid 2n$ comes from our use of the patched 
module $M_{\infty}$ constructed in \cite{six-authors} and the assumptions made there. We also have a version of Theorem \ref{A}, when the field extension $E/F$ is wildly ramified. In that case, the condition on $\WD(\rho_x)$ is more complicated to describe; it can be phrased in terms of Bushnell--Kutzko classification of supercuspidal representations of 
$\GL_n(F)$ in terms of types. If we identify $\GL_n(F)=\Aut_F(E)$, then $\WD(\rho_x)$ is required to correspond to a supercuspidal representation $\pi_x$ under the classical local 
Langlands correspondence, such that $\pi_x$ contains a simple stratum $[\mathfrak A, n', 0, \alpha]$ with $E=F[\alpha]$. Moreover, it is enough to consider  those supercuspidals where  $\alpha$ is minimal over $F$. 
\end{remar} 

\begin{remar}\label{vac} The assumption that $\rhobar$ admits a potentially crystalline lift of regular weight, which is 
  potentially diagonalisable, also comes from our use of the patched module $M_{\infty}$. It has been conjectured in \cite{BMcycles}  and it will be shown in the forthcoming joint work of Toby Gee and ME that such lift always exists. We refer the reader to \cite[\S 1.4]{BLGGT} for the notion of potential diagonalisability. 
  \end{remar} 

\begin{remar}\label{rem_HS} The first result of this flavour for arbitrary $F$ and $n$ was proved by Hellmann
 and Schraen in \cite{HS}, for the family of crystabelline representations with fixed Hodge--Tate 
 weights (i.e. those representations, which become crystalline after a restriction to the Galois group of a finite abelian extension of $F$.) We also handle this case; in fact our arguments 
 allow us to mix both cases.
\end{remar}

\subsection{An idea of the proof}  The proof of Theorem \ref{A} uses in a crucial way the 
patched module $M_{\infty}$ constructed in \cite{six-authors}. We will recall some properties of $M_{\infty}$:  it is an $R_{\infty}[G]$-module, 
where $R_{\infty}$ is a complete local noetherian $R^{\square}$-algebra with residue field $k$, obtained 
as part of the patching construction, and $G:=\GL_n(F)$. We let $K=\GL_n(\OO_F)$. As part of the construction of $M_\infty$ we know that the action of the group ring $R_{\infty}[K]\subset
R_{\infty}[G]$ on $M_{\infty}$ extends to the action of the completed group ring $R_{\infty}\br{K}$, 
which makes $M_{\infty}$ into a finitely generated $R_{\infty}\br{K}$-module. 
This allows us to equip $M_{\infty}$ with a canonical topology with respect to which the action 
of $R_{\infty}\br{K}$ is continuous. Moreover, $M_{\infty}$ is 
a projective $\OO\br{K}$-module in the category of linearly compact $\OO\br{K}$-modules. 

The proof then breaks up into two parts. In the first part we work in an abstract setting, 
by axiomatising the properties of $M_\infty$ mentioned above, and develop a theory which we call \textit{capture}. This abstract setting applies both to $M_{\infty}$ and to the zero-th completed homology of a symmetric space, when the group is compact at $\infty$, see section \ref{global}. In fact the setting  applies to the completed homology 
of more general symmetric spaces associated to a reductive group $\mathbb G$ defined over~$\QQ$, whenever $\rhobar$ occurs only in one homological degree. Conjecturally this holds whenever 
$\rhobar$ is irreducible and $l_0=0$, see \cite[Conj.\,3.3]{matt_icm}; the invariant $l_0$ vanishes  if for example $\mathbb G$ is semi-simple and has discrete series.

The output of the theory says that for certain well chosen families $\{ V_i\}_{i\in I}$ of finite dimensional representations of $K$ on $L$-vector spaces the $R_{\infty}$-annihilator
of the family of $R_{\infty}$-modules $M_{\infty}\otimes_{\OO\br{K}} V_i$, for $i\in I$, 
annihilates $M_{\infty}$. This abstract theory is then applied with the family $\{V_i\}_{i\in I}$ consisting of 
Bushnell--Kutzko types for supercuspidal representations, tensored with a fixed irreducible 
algebraic representation $W$ of $\Res^F_{\Qp} \GL_n/F$ evaluated at $L$ and restricted to $K$. Properties of $M_{\infty}$ proved in \cite{six-authors} allow us to convert the automorphic 
information given by the Bushnell--Kutzko types and the algebraic representation $W$ into the Galois 
theoretic information defining the set $\Sigma$. 

 If $f, g:\mathbb N\rightarrow \mathbb N$ are functions, which tend to infinity as $n$ goes to infinity, then we write $f\sim g$, if $\log f(n)=\log g(n) + O(1)$ as $n$ tends to infinity. 
The following proposition is a key tool in the abstract setting considered above; see section \ref{sec_growth} for the definition of the $\delta$-dimension.

\begin{prop}\label{growth_intro} Let $K$ be a uniformly powerful pro-$p$ group. Let $M$ be a 
finitely generated $k\br{K}$-module,  let $\dim M$ be the $\delta$-dimension of $M$ as $k\br{K}$-module, and let $\dim K$ be the dimension of $K$ as a $p$-adic manifold. For each $n\ge1$ let $K_n$ be the closed subgroup of $K$ generated by the $p^n$-th powers of the elements of $K$. Then 
$$\dim_k M_{K_n}\sim (K:K_n)^{\frac{\dim M}{\dim K}},$$
 where $M_{K_n}$ denotes the space of $K_n$-coinvariants. 
 \end{prop}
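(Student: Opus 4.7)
The plan is to sandwich the coinvariant space $M_{K_n}$ between two Hilbert--Samuel-type quotients of $M$ for the maximal ideal $\mathfrak m$ of $k\br{K}$, and to invoke the standard polynomial growth of those quotients. Write $d = \dim K$ and $e = \dim M$. Fix an ordered minimal topological generating set $x_1, \dots, x_d$ of $K$; by the theory of uniform pro-$p$ groups, $K_n$ is normal in $K$, is topologically generated by $x_1^{p^n}, \dots, x_d^{p^n}$, and $[K:K_n] = p^{nd}$. Set $b_i := x_i - 1 \in \mathfrak m$. By Lazard, the monomials $b^\alpha := b_1^{\alpha_1} \cdots b_d^{\alpha_d}$ form a topological $k$-basis of $k\br{K}$ and the $\mathfrak m$-adic associated graded ring is the polynomial ring $k[B_1, \dots, B_d]$, where $B_i$ is the symbol of $b_i$. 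Equipping $M$ with the $\mathfrak m$-adic filtration, $\gr M$ is a finitely generated graded $k[B_1, \dots, B_d]$-module whose Krull dimension equals the $\delta$-dimension $e$, so standard Hilbert--Samuel theory yields $\log \dim_k(M/\mathfrak m^N M) = e \log N + O(1)$ as $N \to \infty$.

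Let $I_n \subset k\br{K}$ denote the two-sided ideal generated by $\{g - 1 : g \in K_n\}$; normality of $K_n$ in $K$ ensures that this left ideal is in fact two-sided, and one has $M_{K_n} = M / I_n M$. The key claim is the sandwich
\[
\mathfrak m^{d p^n} \ \subset \ I_n \ \subset \ \mathfrak m^{p^n}.
\]
Granting it, the Hilbert--Samuel growth gives $\log \dim_k M_{K_n} = e n \log p + O(1) = (e/d) \log [K:K_n] + O(1)$, which is the asserted asymptotic $\dim_k M_{K_n} \sim (K:K_n)^{\dim M / \dim K}$.

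The right-hand inclusion exploits uniformity: any $g \in K_n$ can be written $g = y^{p^n}$ for some $y \in K$, and then in characteristic $p$ we have $g - 1 = (y-1)^{p^n} \in \mathfrak m^{p^n}$; since $\mathfrak m^{p^n}$ is a two-sided ideal containing all generators of $I_n$, the inclusion follows. For the left-hand inclusion, the choice $g = x_i^{p^n}$ gives $b_i^{p^n} \in I_n$, so $I_n$ contains every PBW monomial $b_1^{\alpha_1} \cdots b_d^{\alpha_d}$ with some $\alpha_i \geq p^n$. By pigeonhole this holds whenever $|\alpha| \geq d p^n$. The Lazard description of $\gr k\br{K}$ identifies $\mathfrak m^{d p^n}$ with the $\mathfrak m$-adic closure of the $k$-span of such monomials; since $k\br{K}$ is a complete Noetherian local ring, every ideal -- in particular $I_n$ -- is closed in the $\mathfrak m$-adic topology (Krull intersection), so this closure lies inside $I_n$, yielding $\mathfrak m^{d p^n} \subset I_n$.

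The principal obstacle is the upper-bound inclusion $\mathfrak m^{d p^n} \subset I_n$: it requires passing between the non-commutative algebra $k\br{K}$ and its commutative associated graded. The Lazard filtration is precisely what makes this passage clean, allowing a pigeonhole argument in $k[B_1, \dots, B_d]$ to yield the desired containment in $k\br{K}$, after which Hilbert--Samuel finishes the proof.
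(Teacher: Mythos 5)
Your proof is correct and follows essentially the same strategy as the paper's proof of Proposition~\ref{growth}: both use the Lazard isomorphism $\gr_{\mathfrak m}k\br{K}\cong k[x_1,\dots,x_d]$, the uniformity fact that $K_n$ is generated by $x_i^{p^n}$, the identity $g^{p^n}-1=(g-1)^{p^n}$ in characteristic $p$, and a pigeonhole bound reducing the growth of coinvariants to the Hilbert--Samuel function of the associated graded module. The only presentational difference is that you establish the sandwich $\mathfrak m^{dp^n}\subset I_n\subset \mathfrak m^{p^n}$ directly in $k\br{K}$ and then apply Hilbert--Samuel once, whereas the paper proves the lower inclusion in $k\br{K}$ but obtains the upper bound by passing to $\gr M$ (and its localization $\widetilde M$) and comparing $\nn^{[p^n]}$ with $\nn^{mp^n}$ there; the two upper bounds agree, and the paper itself remarks that the Hilbert--Kunz multiplicity is not really needed (the Dospinescu simplification), which is exactly your version. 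One small correction: you justify $I_n$ being $\mathfrak m$-adically closed via ``Krull intersection,'' but the cleaner (and standard) argument is that $I_n$ is finitely generated over the Noetherian ring $k\br{K}$, hence is the continuous image of the compact module $k\br{K}^{\oplus r}$ and thus closed; Krull intersection by itself does not immediately give closedness of arbitrary ideals in the noncommutative setting.
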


A  weaker version of this Proposition has been proved by Harris in \cite{harris}, see also \cite{harris_err}.

In the second part of the proof carried out in section 
\ref{support} we show:

\begin{theorem}\label{B} The support of $M_{\infty}$ in $\Spec R_{\infty}$
is equal to a union of irreducible components of $\Spec R_{\infty}$.  
\end{theorem}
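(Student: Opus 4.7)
The plan is to reduce the statement to two commutative-algebra facts about $M_\infty$ and $R_\infty$ that are supplied by the patching construction of \cite{six-authors}: that $M_\infty$ is a maximal Cohen--Macaulay $R_\infty$-module, and that $R_\infty$ is equidimensional. Once both inputs are in place, the desired conclusion is purely formal.

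For the first input, I would invoke the additional structure on $M_\infty$ coming from the patching: beyond the $R_\infty$-action there is a commuting action of a formal power series ring $S_\infty = \OO\br{y_1,\dots,y_d}$ over which $M_\infty$ is finitely generated and, crucially, projective as an $S_\infty\br{K}$-module. The central numerical output of the patching construction is the equality $\dim R_\infty = \dim S_\infty = d$. The $S_\infty$-projectivity implies that $y_1,\dots,y_d$ is a regular sequence on $M_\infty$; since these elements act through the maximal ideal of $R_\infty$ and commute with the $R_\infty$-action, they remain a regular sequence when $M_\infty$ is viewed as an $R_\infty$-module. This gives
\[
\mathrm{depth}_{R_\infty}(M_\infty) \;\ge\; d \;=\; \dim R_\infty,
\]
which forces $\dim_{R_\infty}(M_\infty) = \dim R_\infty$ and makes $M_\infty$ maximal Cohen--Macaulay over $R_\infty$.

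For the second input, by construction $R_\infty$ is a formal power series ring in finitely many variables over the completed tensor product $R^\square_\loc$ of framed local potentially semi-stable deformation rings at a finite set of places. Each such local factor is equidimensional by the standard structure theory of potentially semi-stable deformation rings, and completed tensor products over $\OO$ together with formal power series extensions preserve equidimensionality in this setting; hence $R_\infty$ is equidimensional.

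With both inputs secured, the conclusion is immediate: since $M_\infty$ is Cohen--Macaulay, every $\mathfrak p \in \Ass_{R_\infty}(M_\infty)$ satisfies $\dim R_\infty/\mathfrak p = \dim M_\infty = \dim R_\infty$, so $\mathfrak p$ is a minimal prime of the equidimensional ring $R_\infty$. Consequently $\supp(M_\infty) = \bigcup_{\mathfrak p \in \Ass(M_\infty)} V(\mathfrak p)$ is a union of irreducible components of $\Spec R_\infty$. The main obstacle is the Cohen--Macaulay property, which is not visible from the $R_\infty\br{K}$-module structure of $M_\infty$ alone and depends on the precise numerical compatibility between $R_\infty$ and the auxiliary ring $S_\infty$ produced by patching; once this compatibility is extracted from \cite{six-authors}, the rest of the argument is standard commutative algebra.
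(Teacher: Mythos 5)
Your argument takes a genuinely different route from the paper: the paper's proof is a long ``infinite fern''-style argument using the locally analytic Jacquet module and the Breuil--Hellmann--Schraen theory of the patched trianguline variety, reducing to a tangent-space computation at benign crystalline points. You instead try to prove the theorem by a commutative algebra/Cohen--Macaulay argument in the style of Gee--Newton, which the paper itself mentions in its historical remarks. Unfortunately the key numerical input in your argument is false, and this is precisely why the paper cannot get away with a soft commutative-algebra argument here.

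The problem is your claim that the patching construction gives $\dim R_{\infty} = \dim S_{\infty}$. It does not. The Taylor--Wiles--Kisin numerical coincidence in \cite{six-authors} is the equality
\[
\dim S_{\infty} \;=\; \dim R_{\infty}^{\underline{k},\tau},
\]
where $R_{\infty}^{\underline{k},\tau}$ denotes the quotient of $R_{\infty}$ cut out by the condition of being potentially semi-stable of a \emph{fixed} regular Hodge type $\underline{k}$ and inertial type $\tau$. On the other hand $R_{\infty}$ itself carries \emph{no} $p$-adic Hodge theoretic condition at $\pp$, and a dimension count using the local Euler characteristic formula and the structure of Kisin's potentially semi-stable deformation rings shows
\[
\dim R_{\infty} \;=\; \dim R_{\infty}^{\underline{k},\tau} \;+\; [F:\Q_p]\,\frac{n(n+1)}{2} \;=\; \dim S_{\infty} \;+\; [F:\Q_p]\,\frac{n(n+1)}{2}.
\]
So for $n\geq 1$ the dimension of $R_\infty$ strictly exceeds that of $S_\infty$. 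Consequently the regular sequence you extract from $S_{\infty}$-projectivity has length $\dim S_{\infty}$, not $\dim R_{\infty}$, and the depth bound you obtain is $\mathrm{depth} \geq \dim S_\infty < \dim R_\infty$. This gives no Cohen--Macaulay conclusion; it only bounds the codimension of $\supp_{R_{\infty}} M_{\infty}$ from above by $[F:\Q_p]\,n(n+1)/2$, which is not what we want.

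Your approach \emph{does} work---and is exactly the argument used in \cite[Lem.\,4.17]{six-authors}, which the paper quotes as an input---for the modules $M_{\infty}(\xi)$ (locally algebraic vectors of a fixed weight $\xi$) over the fixed-weight, fixed-type quotient $R_{\infty}^{\underline{k},\tau}$: there the numerics match, $M_{\infty}(\xi)$ is finite and maximal Cohen--Macaulay over $R_{\infty}^{\underline{k},\tau}$, and its support is a union of components of $\Spec R_{\infty}^{\underline{k},\tau}$. The whole difficulty, which your proof does not address, is to pass from the codimension-$[F:\Q_p]\,n(n+1)/2$ slices $\Spec R_{\infty}^{\underline{k},\tau}$ (as $\underline{k}$, $\tau$ vary) to the full $\Spec R_{\infty}$. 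That is what the infinite fern does: the ``extra'' $[F:\Q_p]\,n(n+1)/2$ directions are exactly the $p$-adic Hodge theoretic parameters accessed by the Jacquet module and the map to $\widehat{T}$, and one needs the tangent-space result (Theorem \ref{thm:infinite fern}) at benign points, together with the Breuil--Hellmann--Schraen identification of $X_{\infty}$ as a union of components of $X_{\infty}^{\tri}$, to see that the crystalline points of varying weight spread out in enough directions to fill a component of $\Spec R_{\infty}$.

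Two smaller remarks. First, even granting your numerics, the phrase ``$M_{\infty}$ is maximal Cohen--Macaulay over $R_{\infty}$'' needs interpretation: $M_{\infty}$ is not a finitely generated $R_{\infty}$-module, only a finitely generated $R_{\infty}\br{K}$-module, so one must work with depth over the noncommutative ring $R_{\infty}\br{K_1}$ (for a uniform open $K_1\leq K$) together with the mechanism of Definition~\ref{define_support} and Lemma~\ref{all_finite}; this is a real but fixable point. Second, the paper itself cautions (in the remark following Theorem~\ref{thm:support}) that it is a priori possible for $\supp M_{\infty}$ to be a proper union of components when $\Spec R^{\square}$ is reducible---a possibility that a straightforward maximal Cohen--Macaulay argument over $R_\infty$, were it available, would rule out. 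This is another signal that the simple commutative algebra approach cannot suffice in this local setting.
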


We follow the approach of Chenevier \cite{Chenevier}
and Nakamura \cite{Nakamura}, who proved that
the crystalline points are Zariski dense in the rigid analytic generic
fibre $(\Spf R^{\square})^{\rig}$
of $\Spf R^{\square}$.  Indeed, we prove the following result, 
which implies the preceding theorem.

\begin{theorem}\label{C} The Zariski closure in $(\Spf R_{\infty})^{\rig}$
of the set of crystalline points lying in the support of $M_{\infty}$
is equal to 
a union of irreducible components of $(\Spf R_{\infty})^{\rig}$.  
\end{theorem}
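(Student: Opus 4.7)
The plan is to adapt the strategy of Chenevier and Nakamura --- who proved density of crystalline points in $(\Spf R^{\square})^{\rig}$ --- to the present setting, with two new features: the density must take place inside the support $\mathfrak Z := \supp_{R_\infty}(M_\infty) \subseteq \mathfrak X_\infty := (\Spf R_\infty)^{\rig}$, and one must verify that $\mathfrak Z$ is itself a union of irreducible components of $\mathfrak X_\infty$.

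First I would handle the structural claim that $\mathfrak Z$ is a union of components. Since $R_\infty$ is, up to formally smooth variables, a completed tensor product of local Galois deformation rings, the rigid space $\mathfrak X_\infty$ factors correspondingly. Using the local--global compatibility at $p$ built into the construction of $M_\infty$ in \cite{six-authors}, one can exhibit on every irreducible component of $\mathfrak X_\infty$ that meets $\mathfrak Z$ a classical crystalline point coming from an automorphic Galois representation; at such a point $M_\infty$ is essentially locally free of positive rank, so $\mathfrak Z$ contains an analytic neighbourhood and therefore the whole ambient component. This gives the desired decomposition.

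Next I would prove density inside each component $\mathfrak Z_0 \subseteq \mathfrak Z$. For an arbitrary point $x \in \mathfrak Z_0$, the Chenevier--Nakamura approach constructs, via the locally analytic Jacquet functor and trianguline deformation theory, a rigid analytic family $\mathcal T_x$ of trianguline representations through $x$ of dimension equal to $\dim \mathfrak Z_0$. In the patched setting such a family is produced as (a component of) the patched eigenvariety attached to the continuous dual of $M_\infty$, in the spirit of Breuil--Hellmann--Schraen. By construction $\mathcal T_x \subseteq \mathfrak Z$, its classical points are Zariski dense in it, and they correspond to crystalline Galois representations; by varying the refinement one can arrange these classical points to have the prescribed regular Hodge--Tate weights. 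Hence crystalline points of $\mathfrak Z$ are Zariski dense in a neighbourhood of $x$; letting $x$ vary yields density throughout $\mathfrak Z_0$, and thus throughout $\mathfrak Z$.

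The main obstacle is the construction of the trianguline family $\mathcal T_x$ through every point of $\mathfrak Z$ with the full ambient dimension: one must set up a patched eigenvariety attached to $M_\infty$ whose natural map to $\mathfrak X_\infty$ is dense on each irreducible component of $\mathfrak Z$. This is where the finer properties of $M_\infty$ are used in an essential way --- projectivity over $\OO\br{K}$ in the category of linearly compact $\OO\br{K}$-modules (so that passage to locally analytic vectors is well behaved), compatibility with Bushnell--Kutzko types, and a non-critical slope classicality theorem to guarantee that classical points are Zariski dense in $\mathcal T_x$. With these inputs in place, the remainder of the proof is a fairly routine deployment of the Chenevier--Nakamura density machinery.
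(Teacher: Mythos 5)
There is a genuine gap, and in fact the logical structure of your argument is inverted relative to what the available tools can support.

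\medskip

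\textbf{The structural claim.}
You propose to first show that $\mathfrak Z := \supp_{R_\infty} M_\infty$ is a union of irreducible components of $\mathfrak X_\infty$, by exhibiting a classical crystalline point on each component meeting $\mathfrak Z$ and then arguing that $M_\infty$ is ``essentially locally free of positive rank'' there. This does not work. First, the assertion that \emph{every} component of $\mathfrak X_\infty$ meeting $\mathfrak Z$ contains a classical automorphic point is precisely the hard content that has to be extracted from the infinite fern, and is not something that local--global compatibility gives you for free; local--global compatibility tells you what $\Pi(x)$ looks like \emph{at} a classical point, not that such a point exists on a given component. Second, ``$M_\infty$ locally free of positive rank'' has no useful meaning here: $M_\infty$ is not finitely generated over $R_\infty$, and its finite-type relatives $M_\infty(\xi)$ are supported on the potentially semi-stable loci $\Spec R_\infty^{\underline k}[1/p]$, which have strictly smaller dimension than $\Spec R_\infty[1/p]$. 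So even at a classical point no module in sight is $R_\infty$-flat, and no analytic neighbourhood of the point is forced to lie in $\mathfrak Z$. In the paper, the fact that $\mathfrak Z$ (rather than just the closure of crystalline points) is a union of components is a \emph{consequence} of Theorem C, via the observation that $\Alg(G)$ captures the projective $\OO\br{K}$-module $M_\infty$; it is not a preliminary input.

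\medskip

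\textbf{The density step.}
Your proposed density argument constructs, through an \emph{arbitrary} point $x$ of a component $\mathfrak Z_0$, a trianguline family $\mathcal T_x \subseteq \mathfrak Z$ of dimension $\dim \mathfrak Z_0$ with dense classical points. This is not how the infinite fern works, and each of the implicit claims fails. The patched eigenvariety $X_\infty$ (the support of the coherent sheaf dual to $J_B(\Pi_\infty^{R_\infty-\ana})$) is a closed analytic subset of $\mathfrak X_\infty \times \widehat{T}$ that lies over the trianguline locus; it does not project onto $\mathfrak Z$, and there is no a priori full-dimensional family through a general point. Moreover, ``by varying the refinement one can arrange these classical points to have the prescribed regular Hodge--Tate weights'' is backwards: in the fern the weights move along the family, and one makes essential use of seeing \emph{many} distinct weights, not a fixed one. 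The actual mechanism is a tangent-space computation at a \emph{benign} crystalline point $z$: one shows that $(z,\delta)$ is a smooth point of $X^{\tri}_\infty$ for every refinement $\delta$ (so that, by \cite{BHS}, $X_\infty = X^{\tri}_\infty$ locally there), that every $(z,\delta)$ lies on $X_\infty$ because local--global compatibility puts all $n!$ unramified-principal-series eigenvectors into $J_B(\Pi(x)^{\ana})$, and that the images of the tangent spaces $T_{X^{\tri}_\infty,(z,\delta)}$ span $T_{\mathfrak X_\infty,z}$. Only then does one conclude that the Zariski closure of such points has the same tangent space as $\mathfrak X_\infty$ at $z$, hence is a union of components. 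Without this tangent-space step your proposal has no way to conclude that the family seen by the eigenvariety fills up entire components; the eigenvariety alone only tells you that classical points are dense \emph{in it}, not that it is dense in $\mathfrak Z_0$.

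\medskip

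In short: you have correctly identified the tools (the patched eigenvariety of Breuil--Hellmann--Schraen, the trianguline variety, classicality), but the proposal is missing the central idea --- the infinite-fern tangent-space computation at benign points --- and substitutes in its place two claims (local freeness of $M_\infty$, and full-dimensional trianguline families through arbitrary points) which are not true as stated.
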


The argument of Chenevier and Nakamura uses ``infinite fern''-style
arguments.  In order to modify these arguments so as to be sensitive
to the support of $M_{\infty}$, we use the locally analytic Jacquet
module of \cite{Jacquet-one}, which provides a representation-theoretic
approach to the study of finite slope families.   Breuil, Hellmann,
and Schraen \cite{BHS} have made a careful study of the theory
of the Jacquet module as it applies to $M_{\infty}$, and we use
their results.

The argument outlined above proves the analog of Theorem \ref{A} with $R^{\square}$ replaced by $R_{\infty}$. The statement for $R^{\square}$ is deduced from this by a commutative algebra argument using that   $R_\infty\cong R^{\square}\wtimes_{\OO} A$, where $A$ is a complete local noetherian $\OO$-algebra, which is $\OO$-flat.

\subsection{Historical remarks} Since we have been talking about the results of this paper 
since the workshop on the $p$-adic Langlands correspondence in the Fields  Institute in 2012, some historical comments seem to be in order. 
At the beginning our focus was to prove such density theorem for the global deformation ring 
of an absolutely irreducible representation $\rhobar: \Gal(\overline{\QQ}/ \QQ)\rightarrow \GL_2(k)$, 
which we assume to be modular. The strategy was to apply representation theoretical techniques in the 
spirit of Proposition \ref{growth_intro} to the completed homology of modular curves localised at the ideal of the Hecke algebra corresponding to $\rhobar$. So the  first part of the proof  
was essentially the same as it is now. For the second part of the proof one needed to show that the completed homology of modular curves localised at the ideal of the Hecke algebra corresponding to $\rhobar$ is supported on a union of irreducible components of the global deformation ring of $\rhobar$, which amounted to producing a lower bound on the dimension of the Hecke algebra, see the discussion of
\cite[\S 3.1.1]{matt_icm}. The bound was obtained by showing that the Krull dimension of the Hecke
algebra is at least the $\delta$-dimension of the completed homology, which is equal to $5$, minus the 
$\delta$-dimension of the fibre, which we knew to have dimension $1$: the fibre is determined  by results 
of \cite{emfm} (see also \cite[Lem.\,5.8]{padicJL} for a different argument based on the results of 
\cite{cmf}), and its  $\delta$-dimension is equal to $1$; our original argument is explained in the proof of
\cite[Lem.\,5.14]{padicJL}. 

Such arguments have since appeared in the paper of Gee--Newton \cite{gee-newton}
in a more general context; in particular they also prove a kind of converse theorem called `miracle flatness', which we did not know at the time, by working systematically with Cohen--Macaulay modules. 

We also proved density results 
for the local deformation  rings of $\rhobar: \Gal(\Qbar/\Qp)\rightarrow \GL_2(k)$ by using results of \cite{cmf} and the $p$-adic Langlands correspondence for $\GL_2(\Qp)$.
 
 Then a paper of Hellmann--Schraen appeared \cite{HS}, proving the density result 
 discussed in Remark \ref{rem_HS}. While studying their paper, we realised that the second part of our proof 
 can be obtained using the `infinite fern' arguments by constructing what is now known as the patched 
 eigenvariety. The construction of this patched eigenvariety appeared in the paper by 
 Breuil--Hellmann--Schraen \cite{BHS}, although both ME and Hellmann explained their  ideas 
 to VP independently  at around the same time. 
 
Finally a paper of Scholze \cite{scholze} has appeared, in which he has a much simpler proof of one of the consequences of our theory, discussed in \cite[3.3.2]{matt_icm}. We adapt and explain his argument in section \ref{sec_scholze}. And although our abstract theory is more general, the two key arithmetically interesting examples are covered by this simpler argument due to Scholze, which avoids counting dimensions, required by our criterion for capture. 

In  recent preprints \cite{shen-ning}, \cite{shen-ning2} Shen-Ning Tung has used our result on the support 
of $M_{\infty}$ to obtain a new proof  of the Fontaine--Mazur conjecture for $\GL_2/\QQ$, 
 obtaining new cases of the conjecture when $p=3$ and $p=2$. This was one of the motivations to finally finish this paper. 

\subsection{The structure of the paper} In section  \ref{generalities} we develop the theory of capture in 
an abstract setting. Some of the results of section \ref{sec_capture} have appeared in  \cite[\S 2.4]{CDP}, but 
we recall the proofs for the convenience of the reader.  The main result is Proposition \ref{criterion}, 
which establishes a numerical criterion for a family $\{V_i\}_{i\in I}$ of smooth irreducible representations on finite 
dimensional $L$-vector spaces of a compact $p$-adic analytic group $K$, which does not contain any torsion, to \textit{capture} every projective linearly compact $\OO\br{K}$-module in the sense of 
Definition \ref{defi_capture}. The key Proposition which establishes the link between the 
growth of coinvariants and dimension is proved in section \ref{sec_growth}.

In section \ref{examples} we exhibit two examples where our general theory applies, by taking $K$ 
to be a sufficiently small compact open subgroup of $\GL_n(F)$ and, in the supercuspidal case in section \ref{sec_super},  the family $\{V_i\}_{i\in I}$ inductions of simple characters in the sense of Bushnell--Kutzko theory. The main 
work is then to verify that our numerical criterion holds, which amounts to counting types and their dimensions. We also handle the principal series case in section \ref{sec_prince}.

In section \ref{sec_scholze} we explain that both of these examples can be dealt with in an easier way 
by adopting an argument of Scholze in \cite{scholze}. This argument still uses Bushnell--Kutzko theory, but avoids the counting arguments. 

In section \ref{sec_appl} we apply the theory developed above to the zero-th completed homology of a symmetric space, when the group is compact at infinity. 

In section \ref{support}  we show that the support of $M_{\infty}$ is equal to a union of irreducible components of~$R_\infty$. 

\subsection{Acknowledgments} This paper has its germ in conversations between the authors during the Workshop on Galois Representations and Automorphic Forms at the IAS in March 2011. 
We would like to thank the organisers for the invitation. VP would like to thank Michael Harris and Peter Schneider for their 
comments after his talk in January 2014, Repr\'esentations des groupes r\'eductifs $p$-adiques et applications, Luminy and Eugen Hellmann for explaining his work during his visit to Essen. The authors thank Toby Gee for his detailed comments on an earlier version of the paper, Ga\"etan Chenevier for answering our questions about his work,
Gabriel Dospinescu for pointing out a simplification in the proof of Proposition~\ref{growth},
Yongquan Hu for pointing out a gap in the proof of the same proposition 
in an earlier draft,
and Shen-Ning Tung for his comments on section 6.

\section{Generalities}\label{generalities}

\subsection{Capture}\label{sec_capture}
Let $K$ be a pro-finite group with an open pro-$p$ subgroup. 
Let $\OO\br{K}$ be the completed group algebra, and 
let $\Mod^{\pro}_K(\OO)$ be the category of compact linear-topological $\OO\br{K}$-modules. Let $\{V_i\}_{i\in I}$ be a family of
continuous representations 
of $K$ on finite dimensional $L$-vector spaces, and let $M\in \Mod^{\mathrm{pro}}_K(\OO)$. We will denote by $V_i^*$ the $L$-linear dual of $V_i$.

\begin{defi}\label{defi_capture}We say 
that $\{V_i\}_{i\in I}$ \textit{captures  $M$} if the smallest quotient
$M\twoheadrightarrow Q$ such that
 $\Hom_{\OO\br{K}}^{\rm cont}(Q, V_i^*)\cong \Hom_{\OO\br{K}}^{\rm cont}(M, V_i^*)$, for all $i\in I$, is equal to  $M$. 
\end{defi}

The following equivalent characterizations of capture can be easily verified, see \cite[\S 2.4]{CDP} for details. 

\begin{lem}\label{intersect} Let $$N=\bigcap_{\phi} \Ker \phi,$$ where the intersection is taken over all 
$\phi\in \Hom_{\OO\br{K}}^{\rm cont}(M, V_i^*)$, for all $i\in I$.
Then $\{V_i\}_{i\in I}$ captures $M$ if and only if $N=0$.
\end{lem}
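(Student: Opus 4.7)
The plan is to unpack Definition \ref{defi_capture} and identify the closed submodule $N$ as exactly the kernel of the smallest quotient map appearing in that definition. The equivalence is then formal.

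First, I would set up the correspondence between quotients of $M$ in $\Mod^{\pro}_K(\OO)$ and closed submodules: a continuous surjection $M \twoheadrightarrow Q$ corresponds to the closed submodule $\Ker(M \to Q)$, and $Q_1$ is a quotient of $Q_2$ (i.e., $Q_1$ is ``smaller'') if and only if $\Ker(M \to Q_1) \supseteq \Ker(M \to Q_2)$. The condition $\Hom_{\OO\br{K}}^{\rm cont}(Q, V_i^*) \cong \Hom_{\OO\br{K}}^{\rm cont}(M, V_i^*)$ (with the isomorphism given by composition with $M \twoheadrightarrow Q$) amounts to: every continuous $\phi\colon M \to V_i^*$ factors through $Q$, which holds if and only if $\Ker(M \to Q) \subseteq \Ker \phi$.

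Next I would observe that, letting $N'$ denote $\Ker(M \to Q)$, the above condition for all $i$ and all $\phi$ is equivalent to $N' \subseteq \bigcap_{\phi} \Ker \phi = N$. Each $\Ker \phi$ is closed in $M$ because $\phi$ is continuous and $V_i^*$ is Hausdorff, so $N$ is itself a closed submodule. Thus among all closed submodules $N'$ satisfying the defining property, $N$ is the largest, and correspondingly $M/N$ is the smallest quotient with the property that $\Hom_{\OO\br{K}}^{\rm cont}(M/N, V_i^*) \cong \Hom_{\OO\br{K}}^{\rm cont}(M, V_i^*)$ for all $i\in I$.

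Finally, by Definition \ref{defi_capture}, the family $\{V_i\}_{i\in I}$ captures $M$ if and only if this smallest quotient is $M$ itself, i.e., $M/N = M$, which is equivalent to $N = 0$. There is no real obstacle here; the only minor point to verify carefully is that $N$ is closed (so that $M/N$ genuinely lives in $\Mod^{\pro}_K(\OO)$) and that intersections of closed submodules correspond to inverse limits of quotients in this category, both of which follow from standard facts about compact linear-topological modules.
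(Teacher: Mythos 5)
Your proof is correct and is essentially the expected unwinding of Definition~\ref{defi_capture}; the paper itself does not spell this out (it defers to \cite[\S 2.4]{CDP} with the remark that the equivalence is ``easily verified''), but your argument — identifying $N$ as the largest closed submodule $N'$ such that every $\phi$ factors through $M/N'$, hence $M/N$ as the smallest admissible quotient — is the standard route and matches what that reference does. The one point worth keeping (which you do note) is that $N$ is closed so that $M/N$ lives in $\Mod^{\pro}_K(\OO)$; since each $V_i^*$ is a finite-dimensional $L$-vector space and hence Hausdorff, each $\Ker\phi$ is closed and so is their intersection.
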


\begin{lem}\label{capture_banach} Let $M\in \Mod^{\pro}_K(\OO)$ be $\OO$-torsion free, let $\Pi(M):=\Hom_{\OO}^{\rm cont}(M, L)$ be the associated
	$L$-Banach space equipped with its supremum norm. Then 
$\{V_i\}_{i\in I}$ captures $M$ if and only if the image of the evaluation map $$\oplus_i \Hom_K\bigl(V_i, \Pi(M)\bigr)\otimes_L V_i \longrightarrow \Pi(M)$$ is a dense subspace.
\end{lem}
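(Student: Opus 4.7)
The plan is to translate the condition of capture into a density statement by means of Schikhof-style duality between $\OO$-torsion free objects of $\Mod^{\pro}_K(\OO)$ and $L$-Banach spaces with continuous $K$-action, under which $M$ corresponds to $\Pi(M)$.

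First, since $V_i$ is finite-dimensional, there is a canonical identification
\[
\Hom^{\rm cont}_{\OO\br{K}}(M, V_i^*) \;\cong\; \Hom_K(V_i, \Pi(M)),
\]
obtained by viewing both sides as continuous $K$-equivariant $L$-bilinear pairings $M \times V_i \to L$. Given $\phi : M \to V_i^*$, the corresponding $\psi : V_i \to \Pi(M)$ sends $v \in V_i$ to the functional $m \mapsto \langle \phi(m), v\rangle$. Thus, as $\phi$ and $v$ vary, the functionals $\psi(v) \in \Pi(M)$ are precisely the elements in the image $W$ of the evaluation map from the statement.

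Next, I would unwind $\Ker \phi$ in these terms: for $\phi : M \to V_i^*$ corresponding to $\psi : V_i \to \Pi(M)$, one has $\Ker \phi = \bigcap_{v \in V_i} \Ker \psi(v)$. Consequently, the submodule $N$ of Lemma \ref{intersect} is exactly the common kernel
\[
N \;=\; W^\perp \;:=\; \{\,m \in M : f(m) = 0 \text{ for all } f \in W\,\}.
\]
By Lemma \ref{intersect}, capture of $M$ is therefore equivalent to $W^\perp = 0$.

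Finally, I would invoke the Schikhof duality for $\OO$-torsion free compact linear-topological $\OO$-modules: the pairing $M \times \Pi(M) \to L$ is perfect in the sense that $\Pi(M)$ separates points of $M$, and closed submodules of $M$ correspond to closed subspaces of $\Pi(M)$ via orthogonal complement with $(W^\perp)^\perp = \overline{W}$. In particular, $W^\perp = 0$ if and only if $\overline{W} = \Pi(M)$, which is precisely the density statement. The one subtlety to verify carefully, and the main point of potential friction, is that the bi-orthogonality relation $(W^\perp)^\perp = \overline{W}$ genuinely holds in this non-archimedean setting (this is a standard application of Hahn--Banach over a spherically complete or discretely valued field $L$, applied to $\Pi(M)$ with its supremum norm coming from the unit ball $\Hom_{\OO}^{\rm cont}(M,\OO)$); once this is in place, the equivalence follows immediately.
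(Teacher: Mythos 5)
Your proof is correct, and it follows what is essentially the intended route: the paper defers the verification to \cite[\S 2.4]{CDP}, and the standard argument there is precisely the Schikhof-duality translation that you carry out. The one place deserving a little more care is the biorthogonality step. You should make explicit that a continuous linear functional on $\Pi(M) = \Hom^{\cont}_{\OO}(M,L)$ corresponds, via Schikhof duality, to an element of $M\otimes_{\OO} L$ rather than of $M$; after clearing denominators by a power of $\varpi$ (which does not affect vanishing on the $L$-subspace $W$), one lands back in $M$, and this is what makes the Hahn--Banach separation step produce a nonzero element of the submodule $N=W^{\perp}\subset M$. You should also note that the other inclusion $W^{\perp}=\overline{W}^{\perp}$ uses that for each $m\in M$ the evaluation $\ev_m:\Pi(M)\to L$ is bounded (indeed $|\ev_m(f)|\le 1$ for $f$ in the unit ball $\Hom^{\cont}_{\OO}(M,\OO)$), so that vanishing on $W$ propagates to its closure. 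With those two remarks spelled out, the argument is complete.
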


Our next lemmas describe various simple aspects of the notion of capture.

\begin{lem}
\label{torsion escapes}
If $\{V_i\}_{i \in I}$ captures $M$, then $M$ is $\cO$-torsion free.
\end{lem}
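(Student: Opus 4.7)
The plan is to exploit the characterization of capture given in Lemma \ref{intersect}: capture is equivalent to the statement that the intersection $N=\bigcap_\phi \Ker\phi$ over all continuous $\OO\br{K}$-module maps $\phi:M\to V_i^*$ (varying $i\in I$) is zero. The key observation is that each target $V_i^*$ is a finite-dimensional $L$-vector space, so multiplication by $\varpi$ on $V_i^*$ is a bijection; in particular, each $V_i^*$ is $\OO$-torsion free.

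First I would show that the $\OO$-torsion submodule $M_{\tor}$ is annihilated by every continuous $\OO\br{K}$-linear map into any $V_i^*$. Indeed, if $m \in M$ satisfies $\varpi^n m = 0$ for some $n \geq 1$, then for any $\phi \in \Hom_{\OO\br{K}}^{\rm cont}(M, V_i^*)$ we have $\varpi^n \phi(m) = \phi(\varpi^n m) = 0$, and since $V_i^*$ is $\varpi$-torsion free this forces $\phi(m) = 0$. Hence $M_{\tor} \subseteq \Ker \phi$ for every such $\phi$ and every $i$.

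Taking the intersection over all such $\phi$ and all $i\in I$, we obtain $M_{\tor}\subseteq N$. By the capture hypothesis and Lemma \ref{intersect}, $N=0$, so $M_{\tor}=0$ and $M$ is $\OO$-torsion free. There is no real obstacle in the argument; the only point worth flagging is that the continuity of $\phi$ plays no role here — the conclusion is a purely algebraic consequence of $V_i^*$ being an $L$-vector space together with the vanishing characterization of capture provided by Lemma \ref{intersect}.
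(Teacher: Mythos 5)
Your argument is correct and is essentially the same as the paper's: the paper likewise observes that any $\OO$-torsion element lies in the kernel of every morphism $M \to V_i^*$ (since $V_i^*$ is an $L$-vector space) and then invokes Lemma~\ref{intersect}. You have simply spelled out the intermediate steps in more detail.
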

\begin{proof}
Any $\cO$-torsion element is in the kernel of any morphism $M\to V_i^*$
for any $i \in I$; the lemma then follows from Lemma~\ref{intersect}.
\end{proof}

\begin{lem}\label{capture_product}
If $\{M_j\}_{j \in J}$ is a collection of objects of
$\Mod^{\pro}_K(\OO)$, 
then the family $\{V_i\}_{i \in I}$ captures the product $\prod_{j \in J}M_j$
if and only if it captures each of the $M_j$.
\end{lem}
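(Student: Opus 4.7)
The plan is to exploit the kernel characterization of Lemma~\ref{intersect}: writing $N(X) := \bigcap_{\phi} \Ker \phi$ for the intersection over all $i \in I$ and all continuous $\OO\br{K}$-linear maps $\phi: X \to V_i^*$, capture of $X$ is equivalent to $N(X) = 0$. Both directions of the desired equivalence will then fall out by transporting such maps through the canonical projections and coordinate inclusions of the product. First I would record that $M := \prod_{j \in J} M_j$, equipped with the product topology, really is an object of $\Mod^{\pro}_K(\OO)$: compactness follows from Tychonoff, the open submodules $\prod_j U_j$ (with $U_j = M_j$ for all but finitely many $j$) furnish a linear topology, and the $K$-action is continuous because it is so componentwise. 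Moreover the projections $\pi_j: M \to M_j$ and the coordinate inclusions $\iota_j: M_j \to M$ (sending $x$ to the tuple with $x$ in the $j$-th slot and $0$ elsewhere) are continuous $\OO\br{K}$-morphisms.

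For the forward direction I would fix $j \in J$ and a nonzero $m_j \in M_j$, note $\iota_j(m_j) \neq 0$ in $M$, and use capture of $M$ to produce some $i \in I$ and a continuous $\OO\br{K}$-linear $\psi: M \to V_i^*$ with $\psi(\iota_j(m_j)) \neq 0$. The composite $\psi \circ \iota_j: M_j \to V_i^*$ then witnesses $m_j \notin N(M_j)$, and since $m_j$ was arbitrary we conclude $N(M_j) = 0$.

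For the converse, given a nonzero element $m = (m_j)_{j \in J} \in M$, I would pick $j_0$ with $m_{j_0} \neq 0$, apply capture of $M_{j_0}$ to produce a continuous $\OO\br{K}$-linear $\phi: M_{j_0} \to V_i^*$ not annihilating $m_{j_0}$, and observe that $\phi \circ \pi_{j_0}: M \to V_i^*$ fails to annihilate $m$. Hence $N(M) = 0$.

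The argument is essentially formal once Lemma~\ref{intersect} is in hand; I do not expect any real obstacle. The only step that warrants a moment's care is the verification that $\prod_j M_j$ is a bona fide object of $\Mod^{\pro}_K(\OO)$ and that the $\iota_j$ and $\pi_j$ are morphisms there, which is routine from the product topology.
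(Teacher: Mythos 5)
Your proof is correct, and it follows a genuinely different route from the paper's. The paper first reduces to the case where each $M_j$ is $\cO$-torsion free (via Lemma~\ref{torsion escapes}), then passes to the Banach-space picture via Lemma~\ref{capture_banach}, computes $\Pi(\prod_j M_j) = \widehat{\oplus}_j \Pi(M_j)$, and observes that the evaluation map on the completed direct sum has dense image if and only if each summand's evaluation map does. You instead work directly with the kernel criterion of Lemma~\ref{intersect}, transporting maps through the coordinate inclusions $\iota_j$ (which are indeed continuous $\OO\br{K}$-morphisms with closed image, since the image is an intersection of the closed sets where the other coordinates vanish) and the projections $\pi_j$. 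Your approach is more elementary: it requires neither the torsion-free reduction nor the Banach-space duality, and the whole argument is a two-line diagram chase on kernels. What the paper's route buys is the explicit identification $\Pi(\prod_j M_j) \cong \widehat{\oplus}_j \Pi(M_j)$, a formula of independent interest that fits the Banach-space viewpoint used elsewhere; but for proving this particular lemma, your argument is shorter and more self-contained. The only point you flag — that $\prod_j M_j$ is a bona fide object of $\Mod^{\pro}_K(\OO)$ and that $\iota_j$, $\pi_j$ are morphisms there — is indeed the only thing needing a word, and your sketch of it (Tychonoff, product topology, componentwise continuity) is the standard one.
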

\begin{proof}
Noting that $\prod_j M_j$ is $\cO$-torsion free if and only if each
$M_j$ is $\cO$-torsion free, we see from Lemma~\ref{torsion escapes}
that it is no loss of generality, when proving the lemma, to assume
further that each $M_j$ is $\cO$-torsion free.   We note also that
$$\Pi(\prod_j M_j) := \Hom^{\cont}_{\cO}(\prod_j M_j, L) = \widehat{\oplus}_j
\Hom^{\cont}_{\cO}(M_j, L) =: \widehat{\oplus}_j \Pi(M_j),$$
where the completed direct sum is taken with respect to the sup norm on
each summand, and where we are using the notation from the statement
of Lemma~\ref{capture_banach}.
Thus the evaluation map
$\oplus_i \Hom_K\bigl(V_i, \Pi(\prod_j M_j )\bigr) \otimes_L V_i 
\to \Pi(\prod_j M_j)$
may be rewritten as the map
$\oplus_i \widehat{\oplus}_j \Hom_K\bigl(V_i, \Pi(M_j)\bigr) 
\to \widehat{\oplus}_j \Pi(M_j).$
This has dense image if and only if each of the evaluation maps
$\oplus_i \Hom_K\bigl(V_i, \Pi(M_j)\bigr) \to \Pi(M_j)$
has dense image.  The present lemma thus follows from
Lemma~\ref{capture_banach}.
\end{proof}
 
\begin{lem}\label{weight} Let $W$ be a continuous representation of $K$ on a finite dimensional $L$-vector space, and let  $\Theta$ be a $K$-invariant $\OO$-lattice in $W$. 
If $\{V_i\}_{i\in I}$ captures $M\otimes_{\OO} \Theta$ then $\{V_i\otimes_L W\}_{i\in I}$ captures $M$. 
\end{lem}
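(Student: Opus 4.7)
The plan is to show the two conditions are equivalent via the standard tensor--hom adjunction, exploiting the fact that $\cO$ is a discrete valuation ring so that any $\cO$-lattice $\Theta$ in the finite-dimensional $L$-vector space $W$ is free of finite rank over $\cO$. In particular $\Theta\otimes_\cO L = W$, and writing $\Theta^\vee := \Hom_\cO(\Theta,\cO)$ we have $\Theta^\vee\otimes_\cO L = W^*$. Since $V_i^*$ is already an $L$-vector space, $V_i^*\otimes_\cO \Theta^\vee = V_i^*\otimes_L W^* = (V_i\otimes_L W)^*$, and freeness of $\Theta$ gives $\Hom_\cO(\Theta,V_i^*)=V_i^*\otimes_\cO \Theta^\vee$. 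First I would combine these identifications with the usual adjunction to produce a natural bijection
\begin{equation*}
\Hom_{\cO\br{K}}^{\cont}(M\otimes_\cO \Theta, V_i^*) \;\iso\; \Hom_{\cO\br{K}}^{\cont}\bigl(M,(V_i\otimes_L W)^*\bigr),
\end{equation*}
compatible with the topology and the $K$-action (the $K$-action on the right incorporates the diagonal action on $V_i\otimes W$, while on the left it uses the diagonal action on $M\otimes\Theta$; the continuity is automatic from the finite-rank nature of $\Theta$). Explicitly, if $\phi$ corresponds to $\psi$, then for $m\in M$, $\theta\in\Theta$ and $v\in V_i$ one has $\phi(m\otimes\theta)(v)=\psi(m)(v\otimes\theta)$.

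Next, applying Lemma~\ref{intersect} twice, I would translate the conclusion into an intersection-of-kernels statement. Pick $m\in M$ that is annihilated by every continuous $\cO\br{K}$-linear map $\psi\colon M\to (V_i\otimes_L W)^*$, for every $i\in I$. Using the explicit formula for the adjunction, this forces $\phi(m\otimes\theta)=0$ for every $\theta\in\Theta$ and every continuous $\cO\br{K}$-linear $\phi\colon M\otimes_\cO\Theta\to V_i^*$. Since $\{V_i\}_{i\in I}$ captures $M\otimes_\cO\Theta$, the intersection of such kernels is zero in $M\otimes_\cO\Theta$, so $m\otimes\theta=0$ for every $\theta\in\Theta$.

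Finally, choosing an $\cO$-basis $\theta_1,\dots,\theta_d$ of the free $\cO$-module $\Theta$, the isomorphism $M\otimes_\cO\Theta\cong M^{\oplus d}$ (as compact $\cO$-modules) identifies $m\mapsto(m\otimes\theta_1,\dots,m\otimes\theta_d)$ with the diagonal embedding, so $m\otimes\theta_j=0$ for all $j$ forces $m=0$. Hence $\{V_i\otimes_L W\}_{i\in I}$ captures $M$ by Lemma~\ref{intersect}.

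The only subtle point is keeping track of topologies and $K$-equivariance in the adjunction, but because $\Theta$ has finite rank over $\cO$ the Hom and tensor functors behave well on $\Mod^{\pro}_K(\cO)$ and no analytical difficulty arises; the argument is essentially formal once the identification $(V_i\otimes_L W)^*\cong\Hom_\cO(\Theta,V_i^*)$ is in place.
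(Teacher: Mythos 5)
Your proof is correct and is essentially the same as the paper's: both arguments reduce to the same natural adjunction isomorphism $\Hom_{\cO\br{K}}^{\cont}(M\otimes_\cO\Theta, V_i^*)\cong\Hom_{\cO\br{K}}^{\cont}(M,(V_i\otimes_L W)^*)$ and then conclude using freeness of $\Theta$ over $\cO$. The only difference is cosmetic: you pass through the intersection-of-kernels characterization of capture (Lemma~\ref{intersect}), whereas the paper works directly with the ``smallest quotient'' definition, observing that a quotient $Q$ of $M$ witnessing failure of capture would yield the quotient $Q\otimes_\cO\Theta$ of $M\otimes_\cO\Theta$ witnessing failure of capture there; these are equivalent formulations.
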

\begin{proof} For each $i\in I$ we have a natural isomorphism 
\begin{equation}\label{first} 
\Hom_{\OO\br{K}}^{\cont}(M\otimes_{\OO} \Theta, V_i^*)\cong \Hom_{\OO\br{K}}^{\cont}(M, (V_i\otimes_L W)^*).
\end{equation}
Thus if $Q$ is a quotient of $M$, such that $$\Hom_{\OO\br{K}}^{\cont}(M, (V_i\otimes_L W)^*)=\Hom_{\OO\br{K}}^{\cont}(Q, (V_i\otimes_L W)^*),$$ for all $i\in I$, then we deduce from 
\eqref{first} that $$\Hom_{\OO\br{K}}^{\cont}(M\otimes_{\OO} \Theta, V_i^*)=\Hom_{\OO\br{K}}^{\cont}(Q\otimes_{\OO} \Theta, V_i^*),$$ for  all $i\in I$. This implies that
$M\otimes_{\OO} \Theta= Q\otimes_{\OO} \Theta$, as  $\{V_i\}_{i\in I}$ captures $M\otimes_{\OO} \Theta$ by assumption. Since $\Theta$ is a free $\OO$-module we deduce that 
$M=Q$.
\end{proof} 

\begin{lem}\label{faithful} Assume that $\{ V_i\}_{i\in I}$ captures $M$ and  let $\phi\in \End_{\OO\br{K}}^{\rm cont}(M)$. If $\phi$ kills 
$\Hom_{\OO\br{K}}^{\rm cont}(M, V_i^*)$ for all $i\in I$ then $\phi=0$.
\end{lem}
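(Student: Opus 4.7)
The plan is to reduce the statement immediately to the characterization of capture given in Lemma \ref{intersect}. First, I would fix the interpretation of the hypothesis: the endomorphism $\phi$ acts on the space $\Hom_{\OO\br{K}}^{\cont}(M, V_i^*)$ by precomposition, sending $\psi \mapsto \psi \circ \phi$, and the statement that $\phi$ \emph{kills} this Hom-space means exactly that $\psi \circ \phi = 0$ for every continuous $\OO\br{K}$-homomorphism $\psi\colon M \to V_i^*$ and every $i \in I$.

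Next, I would unpack this condition pointwise. For any $m \in M$ and any $i \in I$, and any $\psi \in \Hom_{\OO\br{K}}^{\cont}(M, V_i^*)$, we have $\psi(\phi(m)) = (\psi \circ \phi)(m) = 0$. Hence $\phi(m)$ lies in $\ker \psi$ for every such $\psi$ and every $i$, so $\phi(m)$ lies in the intersection
\[
N = \bigcap_{\psi} \ker \psi,
\]
taken over all continuous $\OO\br{K}$-homomorphisms $\psi\colon M \to V_i^*$ for all $i \in I$.

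Finally, since $\{V_i\}_{i \in I}$ captures $M$ by hypothesis, Lemma \ref{intersect} tells us that $N = 0$. Therefore $\phi(m) = 0$ for every $m \in M$, i.e., $\phi = 0$. There is no real obstacle here; the lemma is essentially a direct translation of the defining property of capture (in its Lemma \ref{intersect} form) into the language of endomorphisms acting on Hom-spaces, and no further ingredients beyond that characterization are needed.
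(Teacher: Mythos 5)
Your proof is correct and essentially the same as the paper's, just routed through the equivalent characterization in Lemma~\ref{intersect}: you show $\operatorname{im}\phi\subseteq N=\bigcap_\psi\ker\psi=0$, whereas the paper observes that every $\psi$ factors through $\Coker\phi$, so $\Hom(\Coker\phi,V_i^*)\cong\Hom(M,V_i^*)$ and hence $M=\Coker\phi$ directly from Definition~\ref{defi_capture}. Since these two characterizations of capture are interchangeable, the underlying idea is identical.
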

\begin{proof} The assumption on $\phi$ implies that
$$\Hom_{\OO\br{K}}^{\rm cont}(\Coker \phi, V_i^*)\cong \Hom_{\OO\br{K}}^{\rm cont}(M, V_i^*), \quad \forall i\in I.$$ Since $\{V_i\}_{i\in I}$ captures $M$, we 
deduce that $M=\Coker \phi$ and thus $\phi=0$.
\end{proof}

\begin{lem}\label{product} The following assertions are equivalent:
\begin{itemize} 
\item[(i)] $\{V_i\}_{i\in I}$ captures every indecomposable projective in $\Mod^{\pro}_{K}(\OO)$;
\item[(ii)]  $\{V_i\}_{i\in I}$ captures every  projective in $\Mod^{\pro}_{K}(\OO)$;
\item[(iii)] $\{V_i\}_{i\in I}$ captures $\OO\br{K}$.
\end{itemize}
\end{lem}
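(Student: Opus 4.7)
The plan is to reduce everything to two very cheap implications and two short structural arguments. The implications (ii) $\Rightarrow$ (i) and (ii) $\Rightarrow$ (iii) are immediate since $\OO\br{K}$ and indecomposable projectives are in particular projective objects of $\Mod^{\pro}_K(\OO)$. What remains is therefore (iii) $\Rightarrow$ (ii) and (i) $\Rightarrow$ (ii).

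For (iii) $\Rightarrow$ (ii), I would use the fact that in the pseudocompact category $\Mod^{\pro}_K(\OO)$, every projective object $P$ is a direct summand of a product $\prod_{j\in J}\OO\br{K}$ (indeed, $\OO\br{K}$ is a projective generator, and any module is a quotient of such a product; if $P$ is projective, the surjection splits). By Lemma~\ref{capture_product}, the family $\{V_i\}_{i\in I}$ captures such a product as soon as it captures $\OO\br{K}$. So it suffices to check that capture passes to direct summands. Suppose $M=P\oplus P'$ is captured, with inclusion $\iota:P\hookrightarrow M$. Given $0\neq p\in P$, the element $(p,0)\in M$ is non-zero, so by Lemma~\ref{intersect} there exist $i\in I$ and $\phi:M\to V_i^*$ with $\phi(p,0)\neq 0$; the composite $\phi\circ\iota:P\to V_i^*$ then detects $p$. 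Another application of Lemma~\ref{intersect} gives that $P$ is captured.

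For (i) $\Rightarrow$ (ii), I would invoke the structure theory of projectives over the pseudocompact ring $\OO\br{K}$: every projective object of $\Mod^{\pro}_K(\OO)$ decomposes as a product $\prod_j P_j$ of indecomposable projectives (the pseudocompact projective covers of the simple $k\br{K}$-modules, with appropriate multiplicities). Assuming (i), each factor $P_j$ is captured, so Lemma~\ref{capture_product} immediately yields that the product, and hence every projective, is captured. In particular this also gives (i) $\Rightarrow$ (iii) directly, by applying this to the decomposition of $\OO\br{K}$ itself.

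The main potential obstacle is the input from the theory of pseudocompact modules, namely (a) that every projective in $\Mod^{\pro}_K(\OO)$ is a direct summand of a product of copies of $\OO\br{K}$, and (b) that every such projective further decomposes as a product of indecomposable projectives. Both are standard for pseudocompact modules over a pseudocompact ring (Gabriel's theory); once they are in hand, the verification of capture itself is a straightforward manipulation of Lemmas~\ref{intersect} and~\ref{capture_product} as sketched above.
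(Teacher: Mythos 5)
Your proof is correct, and it uses the same essential structural input as the paper (the structure theory of projectives in the pseudocompact category and Lemma~\ref{capture_product}), but your organization of the implications is slightly different. The paper argues both (i)$\Leftrightarrow$(ii) and (i)$\Leftrightarrow$(iii) by a single observation: every projective in $\Mod^{\pro}_K(\OO)$ is a product of indecomposable projectives, and the indecomposable projectives are exactly the indecomposable direct summands of $\OO\br{K}$; everything then follows from Lemma~\ref{capture_product}. You instead take the weaker characterization of projectives as direct summands of products of copies of $\OO\br{K}$ for (iii)$\Rightarrow$(ii), and separately prove that capture passes to direct summands via Lemma~\ref{intersect}. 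That step is fine, but note it is already a special case of Lemma~\ref{capture_product}: if $M = P\times P'$ is captured, then so is each factor, so you could dispense with the explicit argument and just cite that lemma twice (once for the big product of copies of $\OO\br{K}$, once for the two-term decomposition $P\times P'$). Your (i)$\Rightarrow$(ii) is exactly the paper's argument. Net effect: your route for (iii)$\Rightarrow$(ii) is a touch more elementary (it only uses that $\OO\br{K}$ is a projective generator, not the full Krull--Schmidt-type decomposition), at the cost of a slightly longer chain of implications, whereas the paper gets all three equivalences at once from the stronger structural statement.
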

\begin{proof} This follows from Lemma \ref{capture_product} and the fact that every projective object in $\Mod^{\pro}_K(\OO)$ is isomorphic to a product of indecomposable projective objects, which in turn are precisely the indecomposable 
direct summand of $\OO\br{K}$. A version of this argument appears in \cite[Lem.\,2.11]{CDP}.
\end{proof}

\begin{lem}\label{weight_proj} Let $W$ be a continuous representation of $K$ on a finite dimensional $L$-vector space. If $\{V_i\}_{i\in I}$ captures every  projective in $\Mod^{\pro}_{K}(\OO)$
then the same holds for $\{V_i \otimes_L W\}_{i\in I}$. 
\end{lem}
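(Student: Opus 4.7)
The plan is to use the reductions provided by Lemmas~\ref{product} and~\ref{weight} to collapse the problem down to a single free module. By the equivalence (ii)$\Leftrightarrow$(iii) of Lemma~\ref{product}, it suffices to show that $\{V_i \otimes_L W\}_{i\in I}$ captures $\OO\br{K}$. Fix a $K$-stable $\OO$-lattice $\Theta \subset W$; such a $\Theta$ exists because any $\OO$-lattice in $W$ is stabilized by an open subgroup of the profinite group $K$, and the sum of its finitely many $K$-translates is then $K$-stable. Lemma~\ref{weight}, applied with $M = \OO\br{K}$, reduces the task further to showing that $\{V_i\}_{i\in I}$ captures the diagonal tensor product $\OO\br{K} \wtimes_\OO \Theta$.

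This last module is, however, free of finite rank as an $\OO\br{K}$-module. The key point is the classical twist isomorphism of $\OO\br{K}$-modules
\[ \OO\br{K} \wtimes_\OO \Theta \;\iso\; \OO\br{K} \wtimes_\OO \Theta^{\mathrm{triv}}, \qquad g \otimes v \longmapsto g \otimes g^{-1} v, \]
where $\Theta^{\mathrm{triv}}$ denotes $\Theta$ with trivial $K$-action and the right-hand side carries the $K$-action through the left tensor factor only. Since $\Theta$ is free over $\OO$ of rank $d := \dim_L W$, the right-hand side is isomorphic to $\OO\br{K}^{\oplus d}$ as an $\OO\br{K}$-module, hence so is $\OO\br{K} \wtimes_\OO \Theta$. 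In particular, $\OO\br{K} \wtimes_\OO \Theta$ is a projective object of $\Mod^{\pro}_K(\OO)$, so by the hypothesis of the lemma it is captured by $\{V_i\}_{i\in I}$, and backing up through the chain of reductions finishes the proof.

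The only step that is not purely formal is the verification that the twist map is well-defined on the completed tensor product. This reduces to observing that, after choosing an $\OO$-basis $e_1, \dots, e_d$ of $\Theta$ and writing $g^{-1} e_j = \sum_i a_{ij}(g) e_i$, the coefficient functions $a_{ij} : K \to \OO$ are continuous, so the $\OO$-linear map defined on $\OO[K] \otimes_\OO \Theta$ extends continuously to the completion $\OO\br{K} \wtimes_\OO \Theta$. I expect this to be the main obstacle, though a minor one; once it is granted, the rest of the argument is a purely formal chain of reductions via Lemmas~\ref{product} and~\ref{weight}.
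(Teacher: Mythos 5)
Your proposal is correct and follows essentially the same path as the paper's proof: reduce via Lemma~\ref{product} to capturing $\OO\br{K}$, apply Lemma~\ref{weight} with $M=\OO\br{K}$ to reduce to showing $\{V_i\}$ captures $\OO\br{K}\otimes_\OO\Theta$, and then observe that the twist isomorphism $g\otimes v\mapsto g\otimes g^{-1}v$ gives $\OO\br{K}\otimes_\OO\Theta\cong\OO\br{K}^{\oplus d}$, hence projectivity. The only cosmetic difference is that the paper verifies the twist isomorphism at finite levels (on $\OO/\varpi^n[K/K_n]\otimes\Theta/\varpi^n$, choosing $K_n$ to act trivially) and passes to the limit, whereas you invoke continuity of the matrix coefficients $a_{ij}$ directly; these are the same argument.
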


\begin{proof} Let $\Theta$ be a $K$-invariant $\OO$-lattice in $W$. If $K_n$ is an open normal subgroup of $K$, which acts trivially on $\Theta/\varpi^n$,  then 
the map $g \otimes v \mapsto  g \otimes g^{-1} v$, where $g\in K/K_n$ and $v \in \Theta/\varpi^n$ induces an isomorphism of $K$-representations:
$$ \OO/\varpi^n [ K/K_n] \otimes \Theta/\varpi^n \cong \OO/\varpi^n [ K/K_n] \otimes \Theta/\varpi^n,$$
where $K$ acts diagonally on the left hand side and only on the first factor on the right hand side of the isomorphism. By passing to a limit we deduce that 
 $$\OO\br{K}\otimes_{\OO} \Theta\cong \OO\br{K}^{\oplus d}$$  as $\OO\br{K}$-module, where $d=\dim_L W$. 
In particular,  $\OO\br{K}\otimes_{\OO} \Theta$ is projective in $\Mod^{\pro}_{K}(\OO)$, and hence is captured by $\{V_i\}_{i\in I}$ by assumption. Lemma \ref{weight} implies that 
$\{V_i\otimes_L W\}_{i \in I}$ captures $\OO\br{K}$ and the assertion follows from Lemma \ref{product}.
\end{proof}

\subsection{Density}

Let $K$ and $M$ and  $\{V_i\}_{i\in I}$ be as above, but we additionally assume that $K$ is $p$-adic analytic. In the applications $K$ will be an open subgroup of $\GL_n(F)$, where $F$ is a finite extension of $\Qp$.
 Let $R$ be a complete local noetherian $\OO$-algebra with residue field $k$. We assume that we are given a faithful, continuous
$\OO$-linear action of $R$ on $M$, which commutes with the action of $K$. In other words, we assume that $M$ is a compact $R\br {K}$-module, such that the action of $R$ is faithful.
 For each $i\in I$ let $\mathfrak a_i$ be the $R$-annihilator of $\Hom_{\OO\br{K}}^{\rm cont}(M, V_i^*)$. 
 
 \begin{lem}\label{caught_it} If $\{V_i\}_{i\in I}$ captures $M$ then $\bigcap_{i\in I} \mathfrak a_i= 0$.
 \end{lem}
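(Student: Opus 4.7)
The plan is to reduce the statement to a direct application of Lemma~\ref{faithful} together with the assumed faithfulness of the $R$-action on $M$.

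First, I would reformulate the condition $r\in\ga_i$. Since the $R$-action on $M$ commutes with the $K$-action, multiplication by any element $r\in R$ defines an endomorphism $\phi_r\in\End_{\OO\br{K}}^{\mathrm{cont}}(M)$. The natural $R$-module structure on $\Hom_{\OO\br{K}}^{\mathrm{cont}}(M,V_i^*)$ is given by $(r\cdot\psi)(m)=\psi(rm)$, i.e.\ $r\cdot\psi=\psi\circ\phi_r$. Hence $r\in\ga_i$ means precisely that $\psi\circ\phi_r=0$ for every $\psi\in\Hom_{\OO\br{K}}^{\mathrm{cont}}(M,V_i^*)$.

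Now suppose $r\in\bigcap_{i\in I}\ga_i$. Then $\phi_r$ kills $\Hom_{\OO\br{K}}^{\mathrm{cont}}(M,V_i^*)$ for every $i\in I$ in the sense of Lemma~\ref{faithful}. Since $\{V_i\}_{i\in I}$ captures $M$ by hypothesis, Lemma~\ref{faithful} forces $\phi_r=0$. Because the $R$-action on $M$ is assumed to be faithful, this gives $r=0$, as desired.

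There is no real obstacle here: the only subtle point is identifying the $R$-action on the Hom space correctly (so that annihilation of the Hom space by $r$ translates into $\psi\circ\phi_r=0$), after which the conclusion is immediate from Lemma~\ref{faithful} and the faithfulness hypothesis.
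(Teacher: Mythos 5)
Your proof is correct and is essentially the paper's argument: the paper simply cites Lemma~\ref{faithful}, leaving implicit exactly the unwinding you spell out (that $r\in\ga_i$ means $\phi_r$ kills $\Hom_{\OO\br{K}}^{\cont}(M,V_i^*)$, and that $\phi_r=0$ together with faithfulness forces $r=0$).
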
 
 \begin{proof}  This  follows from Lemma \ref{faithful}.
 \end{proof}
 
 Let $\mSpec R[1/p]$ be the set of maximal ideals of $R[1/p]$. If $x\in \mSpec R[1/p]$ then its residue field $\kappa(x)$ is a finite extension of $L$, and we denote by $\OO_{\kappa(x)}$ 
 the ring of integers of $\kappa(x)$. Then $\OO_{\kappa(x)}\otimes_R M$ is a compact $\OO\br{K}$-module and 
we define 
 $$\Pi_x:=\Hom^{\cont}_{\OO}(\OO_{\kappa(x)}\otimes_R M, L).$$
 Then $\Pi_x$ equipped with the supremum norm is a unitary $L$-Banach space representation of $K$. 
 
 \begin{prop}\label{dense_cap} Assume that $M$ is a finitely generated $R\br{K}$-module,
that $\{V_i\}_{i \in I}$ captures $M$,
 and that there exists an integer $e$ such that $(\sqrt{\mathfrak a_i})^e \subset \mathfrak a_i$ for all $i\in I$. Then the set 
 $$\Sigma:=\{ x\in \mSpec R[1/p]: \Hom_K(V_i, \Pi_x)\neq 0 \text{ for some } i\in I\}$$ 
 is dense in $\Spec R$. 
 \end{prop}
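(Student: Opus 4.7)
The plan is to combine two density assertions: a commutative-algebra one showing that $\bigcup_{i \in I} V(\mathfrak a_i)$ is Zariski dense in $\Spec R$, and a representation-theoretic one identifying $\Sigma_i := \{x \in \mSpec R[1/p] : \Hom_K(V_i, \Pi_x) \neq 0\}$ with (essentially) the rigid-analytic support of a certain finitely generated $R[1/p]$-module.

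For the first assertion, I would set $\mathfrak b := \bigcap_{i\in I} \sqrt{\mathfrak a_i}$. Lemma~\ref{caught_it} combined with the capture hypothesis gives $\bigcap_i \mathfrak a_i = 0$, and the hypothesis $(\sqrt{\mathfrak a_i})^e \subset \mathfrak a_i$ then forces $\mathfrak b^e \subset \bigcap_i (\sqrt{\mathfrak a_i})^e \subset \bigcap_i \mathfrak a_i = 0$. Thus $\mathfrak b$ is both radical (as an intersection of radicals) and nilpotent, so it equals the nilradical of $R$; consequently $\bigcup_i V(\mathfrak a_i) = V(\mathfrak b)$ is all of $\Spec R$.

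For the second assertion, I would set $N_i := \Hom^{\cont}_{\OO\br{K}}(M, V_i^*)$, which via the pairing used in Lemma~\ref{capture_banach} coincides with $\Hom_K(V_i, \Pi(M))$ and is naturally an $R/\mathfrak a_i$-module. The finite generation of $M$ over $R\br{K}$, combined with the finite-dimensionality of $V_i$ over $L$, implies via standard admissibility-type arguments for Banach modules over complete local noetherian $\OO$-algebras that $N_i$ is a finitely generated $R[1/p]$-module with $\ann_{R[1/p]}(N_i) = \mathfrak a_i R[1/p]$. Applying the same adjunction to the quotient $\OO_{\kappa(x)} \cotimes_R M$ of $M$ in place of $M$ itself yields a natural identification
\[
\Hom_K(V_i, \Pi_x) \cong N_i \otimes_{R[1/p]} \kappa(x),
\]
so $\Sigma_i$ coincides with the rigid-analytic support of $N_i$. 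For a finitely generated $R[1/p]$-module this support is $V(\mathfrak a_i R[1/p])$, and the Jacobson property of $R[1/p]$ then gives Zariski density of $\Sigma_i$ in $V(\mathfrak a_i)$. Combining with the first assertion, $\Sigma = \bigcup_i \Sigma_i$ is dense in $\bigcup_i V(\mathfrak a_i) = \Spec R$.

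The main obstacle will be the finite generation of $N_i$ over $R[1/p]$: this cannot be deduced from finite generation of $M$ over $\OO\br{K}$ (which in general fails), but must be obtained from finite generation over the larger ring $R\br{K}$ together with $p$-adic functional-analytic considerations. The secondary technical point is the verification of the identification $\Hom_K(V_i, \Pi_x) \cong N_i \otimes_{R[1/p]} \kappa(x)$, which requires care with the base change $\OO_{\kappa(x)} \cotimes_R M$ since $V_i^*$ is not naturally an $R$-module.
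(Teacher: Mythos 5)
Your decomposition of the argument --- a commutative-algebra step exploiting $(\sqrt{\mathfrak a_i})^e\subset\mathfrak a_i$ together with $\bigcap_i\mathfrak a_i=0$ from Lemma~\ref{caught_it}, plus a representation-theoretic step identifying $\Sigma_i$ with the rational points of the support of a finitely generated module --- is precisely the structure of the paper's proof. The paper works with the $\OO$-dual module $M(\Theta_i):=\bigl(\Hom^{\cont}_{\OO\br{K}}(M,\Theta_i^d)\bigr)^d$ rather than directly with $N_i$, citing \cite[Prop.\,2.15, Rem.\,2.18, Prop.\,2.22]{duke} for its finite generation over $R$, for its $R$-annihilator being $\mathfrak a_i$, and for the identification $\Sigma_i=\mSpec(R/\mathfrak a_i)[1/p]$; in particular what is actually proved there is the nonvanishing statement that $\Hom_K(V_i,\Pi_x)\neq 0$ if and only if $x\in V(\mathfrak a_i)$, which is weaker than the fibre isomorphism you assert but is all that is used, and you rightly flag this as the place where real functional-analytic input is needed.

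There is, however, a genuine gap at the step ``the Jacobson property of $R[1/p]$ then gives Zariski density of $\Sigma_i$ in $V(\mathfrak a_i)$.'' The Jacobson property yields density of $\Sigma_i$ inside $\Spec(R/\mathfrak a_i)[1/p]$; but your $V(\mathfrak a_i)\subset\Spec R$ is $\Spec(R/\mathfrak a_i)$, and the image of $\Spec(R/\mathfrak a_i)[1/p]\to\Spec(R/\mathfrak a_i)$ need not be dense --- any component killed by $\varpi$ is missed entirely. Concretely, $\bigcap_{x\in\Sigma_i}(\mathfrak m_x\cap R)=\sqrt{\mathfrak a_i}R[1/p]\cap R$, which contains $\sqrt{\mathfrak a_i}$ but can be strictly larger; equality, and hence the density you assert, holds precisely when $R/\mathfrak a_i$ is $\OO$-torsion free. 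The paper secures this by noting that $M(\Theta_i)$, being an $\OO$-dual, is $\OO$-torsion free and has $R$-annihilator $\mathfrak a_i$, so $R/\mathfrak a_i$ is $\OO$-torsion free as well. In your setup the same fact is equally immediate once stated: $N_i$ is an $L$-vector space (since $V_i^*$ is), so $\varpi r N_i=0$ forces $rN_i=0$, i.e.\ $r\in\mathfrak a_i$. But the observation must actually be made; without it the passage from density in $\Spec R[1/p]$ to density in $\Spec R$ is unjustified. (A minor side-remark: $\bigcup_i V(\mathfrak a_i)$ equals $V\bigl(\bigcap_i\sqrt{\mathfrak a_i}\bigr)$ only up to Zariski closure when $I$ is infinite, but as you are only after density this is harmless.)
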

 \begin{proof} For each $i\in I$ we choose a $K$-invariant bounded $\OO$-lattice $\Theta_i$ in $V_i$.
  It follows from 
 \cite[Prop.\,2.15]{duke} that $M(\Theta_i):=(\Hom_{\OO\br{K}}^{\cont}(M, \Theta_i^d))^d$, where 
 $(\ast)^d:=\Hom_{\OO}^{\cont}(\ast, \OO)$, is a finitely generated $R$-module. Moreover, it follows from 
 \cite[Eqn.\,(11) and  Rem.\,2.18]{duke} that  for a fixed $i\in I$ the annihilator of $M(\Theta_i)$ in $R$ is equal to $\mathfrak a_i$ and it follows from \cite[Prop.\,2.22]{duke} that the set 
 $$\Sigma_i:=\{ x\in \mSpec R[1/p]: \Hom_K(V_i, \Pi_x)\neq 0\}$$
 is precisely $\mSpec (R/\mathfrak a_i)[1/p]$. Since $M(\Theta_i)$ is $\OO$-torsion free, $R/\mathfrak a_i$ is $\OO$-torsion free, and thus the intersection of all maximal ideals in $\Sigma_i$ with $R$ is equal to $\sqrt{\mathfrak a_i}$. Thus the intersection of all maximal ideals in $\Sigma$ with $R$ is equal to $\cap_{i\in I}  \sqrt{\mathfrak a_i}$. 
 Note that $R$ is $\OO$-torsion free since by assumption it acts faithfully on an $\OO$-torsion free module $M$. 
 Our assumption $(\sqrt{\mathfrak a_i})^e \subset \mathfrak a_i$ for all $i\in I$
 implies that the $e$-th power of this ideal is contained in $\cap_{i\in I}  \mathfrak a_i$, which is zero by Lemma \ref{caught_it}. Thus $\Sigma$ is Zariski dense in $\Spec R$.
 \end{proof}
 
 \begin{remar}\label{rem:semi-simple}
 If $M$ is a finitely generated $\OO\br{K}$-module then $\Hom_{\OO\br{K}}^{\rm cont}(M, V_i^*)$ is a finite dimensional $L$-vector space. 
 If for each $i\in I$ the action of $R[1/p]$ on $\Hom_{\OO\br{K}}^{\rm cont}(M, V_i^*)$ is semi-simple then we can conclude that $R/\mathfrak a_i$ is reduced, and 
 hence $\sqrt{\mathfrak a_i}=\mathfrak a_i$, so that the conditions of Proposition \ref{dense_cap} are satisfied.
 \end{remar}
 
 \begin{remar}\label{nonnoetherian} The set-up of Proposition \ref{dense_cap} can be relaxed to allow non-noe\-the\-rian rings as follows. 
 Let $A$ be a projective limit of local artinian $R$-algebras with residue field $k$ with transition maps homomorphisms of local $R$-algebras. Then $A$ is a local 
 $R$-algebra with residue field $k$, which need not be noetherian. Assume that the action of $R\br{K}$ on 
 $M$ extends to a continuous action of $A\br{K}$ for the projective limit topology on $A$.  For each $i\in I$ let $A_i$ be a quotient of $A$, which acts faithfully on $M(\Theta_i)$. 
 Since $R$ is noetherian and $M(\Theta_i)$ is a finitely generated $R$-module, we deduce that the rings $A_i$ are noetherian for all $i\in I$. 
 Thus the proof of Proposition \ref{dense_cap} shows that if $A_i$ are reduced for all $i\in I$ and $A$ acts faithfully on $M$ then $\bigcup_{i\in I} \mSpec A_i [1/p]$
 is dense in $\Spec A$. 
 \end{remar} 
  
 \subsection{Rationality} From the representation theory point of view it is convenient to work over $\Lbar$, the algebraic closure of $L$. However this causes trouble from  the $p$-adic functional analysis point of view, see for example the proof
 of Lemma \ref{capture_product}. The purpose of this section is to deal with this problem.

 \begin{lem}\label{finite_structure} If $V$ is a finite dimensional $\Lbar$-vector space with a continuous $K$-action then there 
 is a finite extension $L'$ of $L$ contained in $\Lbar$ and a finite dimensional $K$-invariant $L'$-subspace $W$ of $V$,
 such that the inclusion $W\hookrightarrow V$ induces an isomorphism $W\otimes_{L'} \Lbar\overset{\cong}{\longrightarrow} V$. 
 \end{lem}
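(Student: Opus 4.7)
The plan is to fix a basis $e_1, \dots, e_d$ of $V$ over $\Lbar$, which presents the action of $K$ as a continuous homomorphism $\rho : K \to \GL_d(\Lbar)$, and then show that $\rho(K) \subseteq \GL_d(L')$ for some finite extension $L'/L$ inside $\Lbar$. Granting this, $W := \bigoplus_{i=1}^d L' e_i$ is a $K$-stable $L'$-subspace of $V$ with $W \otimes_{L'} \Lbar \iso V$, which gives the lemma.

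To locate such an $L'$, I would first invoke the fact that $K$, being a compact $p$-adic analytic group, is topologically finitely generated (by Lazard's theorem), so I can pick topological generators $g_1, \dots, g_N$ of $K$. The matrices $\rho(g_1), \dots, \rho(g_N)$ together have only finitely many entries in $\Lbar$, and since $\Lbar$ is the filtered union of the finite extensions of $L$ that it contains, there is a single such finite extension $L'/L$ containing all of these entries. Thus the abstract subgroup $\Gamma \subseteq \GL_d(\Lbar)$ generated by the $\rho(g_i)$ lies in $\GL_d(L')$.

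To promote the inclusion $\Gamma \subseteq \GL_d(L')$ to the full image $\rho(K)$, I would observe that $L'$ is complete as a finite extension of $\Qp$, hence closed in $\Lbar$ for the valuation topology; this makes $\GL_d(L')$ a closed subset of $\GL_d(\Lbar)$. On the other hand, continuity of $\rho$ combined with the density of $\langle g_1, \dots, g_N\rangle$ in $K$ forces $\rho(K)$ into the closure of $\Gamma$ in $\GL_d(\Lbar)$, and hence into $\GL_d(L')$, as required.

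The only genuine point of care is the interpretation of ``continuous'': one needs to equip $V \cong \Lbar^d$ with the product topology coming from the unique absolute value on $\Lbar$ that extends the one on $L$, which is standard in this context. Beyond this, the argument is formal and sidesteps any delicate analysis of compact subsets of the non-locally-compact field $\Lbar$---the potentially hard work is outsourced to the topological finite generation of $K$, which is the single place the $p$-adic analyticity hypothesis enters.
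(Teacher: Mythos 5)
Your proof is correct, but it takes a genuinely different route from the paper's. Both reduce to showing that the image of a continuous $\rho : K \to \GL_d(\Lbar)$ lands in $\GL_d(L')$ for some finite $L'/L$ in $\Lbar$. The paper then cites Lemma~2.2.1.1 of Breuil--M\'ezard, whose proof is a Baire category argument: write $\Lbar$ as the countable filtered union of its finite subextensions $L_i$, so $K = \bigcup_i \rho^{-1}\bigl(\GL_d(L_i)\bigr)$ is a countable union of closed subgroups; by Baire one of them is open in $K$, hence of finite index since $K$ is compact, and one finishes by adjoining the entries of $\rho$ on a finite set of coset representatives. That argument uses only compactness of $K$. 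Yours instead uses that $K$ is topologically finitely generated --- which is true here because $K$ is compact $p$-adic analytic, but is not a general fact about profinite groups --- catches $\rho$ of the generators inside some $\GL_d(L')$, and then passes to the closure using that $L'$ is complete, hence closed in $\Lbar$. Both arguments are sound; the paper's has the slightly weaker standing hypothesis on $K$, while yours bypasses Baire category and is arguably more direct. In the paper's actual setting ($K$ compact $p$-adic analytic) the two are interchangeable.
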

 \begin{proof} We may rephrase the problem as follows: if $\rho:  K \rightarrow \GL_n(\Lbar)$ is a continuous group 
 homomorphism then its image is contained in $\GL_n(L')$ for some finite extension $L'$ of $L$ contained in $\Lbar$. 
 Such statement is proved in Lemma 2.2.1.1 in \cite{bm1}, with $K$ equal to the absolute Galois group of $\Qp$. However, 
 only compactness of the group is used in the proof, so their argument applies in our setting. 
 \end{proof}

 \begin{lem}\label{homs_the_same} Let $L'$ be a finite extension of $L$ contained in $\Lbar$. Let $W$ be a finite dimensional $L'$-vector 
 space with a continuous $K$-action. Then we have a natural isomorphism
 \begin{equation}\label{rat}
 \Hom_{\OO\br{K}}^{\cont}(M, W)\otimes_{L'} \Lbar \overset{\cong}{\longrightarrow} \Hom_{\OO\br{K}}^{\cont}(M, W\otimes_{L'}\Lbar).
 \end{equation}
 \end{lem}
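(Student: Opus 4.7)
The plan is to prove this isomorphism by reducing to the case of a finite subextension of $\overline{L}/L'$. The natural map in \eqref{rat} sends $\phi\otimes a$ to the continuous $\OO\br{K}$-linear map $m\mapsto a\cdot\phi(m)$. Injectivity is straightforward: any element of the source can be written as a finite sum $\sum_i \psi_i\otimes a_i$ with the $a_i$ linearly independent over $L'$ in $\overline{L}$, and its image vanishes precisely when $\sum_i\psi_i(m)a_i=0$ in $W\otimes_{L'}\overline{L}$ for every $m\in M$, which by the $L'$-linear independence of the $a_i$ forces each $\psi_i=0$.

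For surjectivity, I would equip $W\otimes_{L'}\overline{L}$ with the strict inductive limit topology coming from its finite-dimensional subspaces $W\otimes_{L'}L''$, as $L''$ ranges over finite extensions of $L'$ contained in $\overline{L}$. By the theory of local fields there are only finitely many such extensions of any given degree, so this is a countable filtered system of Hausdorff finite-dimensional $L$-vector spaces. Since $M$ is compact, any continuous $\OO\br{K}$-linear map $\phi\colon M\to W\otimes_{L'}\overline{L}$ has compact image, and a compact subset of a countable strict inductive limit of finite-dimensional locally convex spaces must lie in a single term; hence $\phi$ factors continuously as an $\OO\br{K}$-linear map $M\to W\otimes_{L'}L''$ for some finite extension $L''/L'$.

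It remains to prove the isomorphism after replacing $\overline{L}$ by a finite extension $L''/L'$. Choosing an $L'$-basis $b_1,\dots,b_d$ of $L''$ yields an isomorphism $W\otimes_{L'}L''\cong W^{\oplus d}$ of topological $\OO\br{K}$-modules, so a continuous $\OO\br{K}$-linear map into $W\otimes_{L'}L''$ is simply a $d$-tuple of such maps into $W$, giving
$$\Hom_{\OO\br{K}}^{\cont}(M,W\otimes_{L'}L'')\cong\Hom_{\OO\br{K}}^{\cont}(M,W)^{\oplus d}\cong\Hom_{\OO\br{K}}^{\cont}(M,W)\otimes_{L'}L''.$$
Passing to the filtered colimit over finite extensions $L''/L'$, which commutes with the tensor product on the left and which covers every element on the right by the factoring argument of the previous paragraph, then yields the desired isomorphism \eqref{rat}. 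The main obstacle is the topological step in the second paragraph: fixing the correct topology on $W\otimes_{L'}\overline{L}$ and verifying that continuous maps from the compact module $M$ factor through a finite subextension. Once this is done, the rest is elementary linear algebra.
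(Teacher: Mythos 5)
Your overall strategy matches the paper's: reduce to the case of a finite subextension $L''/L'$, where $\Hom^{\cont}_{\OO\br{K}}(M,-)$ commutes with finite direct sums, and establish the crucial surjectivity step by showing that a continuous $\cO$-linear map from the compact module $M$ to $W\otimes_{L'}\Lbar$ factors through some $W\otimes_{L'}L''$. The paper simply cites Breuil--M\'ezard, Lemma~2.2.1.1 for this last point (using the compactness of $M$), whereas you instead try to justify it by an LF-space argument.

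Here is the subtlety, which I think is a genuine gap. Your argument equips $W\otimes_{L'}\Lbar$ with the \emph{strict inductive limit} topology and invokes the fact that a bounded (hence compact) subset of a countable strict inductive limit of finite-dimensional spaces lies in a single term. But the topology the paper has in mind, following Breuil--M\'ezard, is the \emph{valuation} topology (inherited from the completion of $\Lbar$), and this is strictly coarser than the strict inductive limit topology. A map continuous for the valuation topology need not \emph{a priori} be continuous for the strict inductive limit topology, so your LF-space fact does not directly apply to the maps being considered. Worse, for the valuation topology the statement ``compact subsets lie in a finite subextension'' is \emph{false} as a statement about arbitrary compact subsets: a convergent sequence $x_n\to 0$ with each $x_n$ generating a larger and larger extension of $L'$ gives a compact set $\{x_n\}\cup\{0\}$ contained in no finite subextension. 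What rescues the situation, and is the real content of the Breuil--M\'ezard argument, is that the image of a continuous $\cO$-linear map from $M$ is not merely compact but a compact $\cO$-\emph{submodule}; one then argues (e.g.\ by Baire category, since $W\otimes_{L'}\Lbar$ is a countable increasing union of the closed $\cO$-submodules $W\otimes_{L'}L''$) that some $W\otimes_{L'}L''$ meets the image in an open, hence finite-index, submodule, and finitely many coset representatives then put the whole image inside a slightly larger finite subextension. Your proof would be complete if you either (a) commit explicitly to the strict inductive limit topology and verify this yields the same $\Hom$-space as the paper's convention, or (b) replace the LF-space step by an argument, \`a la Breuil--M\'ezard, that exploits the $\cO$-module structure of the image.
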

 \begin{proof} Since  $\Hom_{\OO\br{K}}^{\cont}(M, \ast)$ commutes with finite direct sums, the isomorphism in \eqref{rat} holds if we replace $\Lbar$ by  a finite extension $L''$ of $L'$ contained in $\Lbar$. We thus obtain
$$ \Hom_{\OO\br{K}}^{\cont}(M, W)\otimes_{L'} L'' \overset{\cong}{\rightarrow} \Hom_{\OO\br{K}}^{\cont}(M, W\otimes_{L'} L'')
\hookrightarrow \Hom_{\OO\br{K}}^{\cont}(M, W\otimes_{L'} \Lbar).$$
 By passing to a direct limit we obtain an injection 
 $$ \Hom_{\OO\br{K}}^{\cont}(M, W)\otimes_{L'} \Lbar \hookrightarrow \Hom_{\OO\br{K}}^{\cont}(M, W\otimes_{L'} \Lbar).$$
 In order to prove surjectivity we have to show that if $\varphi: M\rightarrow W\otimes_{L'}\Lbar$ is a continuous map of 
 $\OO$-modules then its image is contained in $W\otimes_{L'}L''$ for some finite extension $L''$ of $L'$. This can be proved in the same way as Lemma 2.2.1.1 in \cite{bm1}
 using the compactness of $M$.
 \end{proof}
 
  Let $\{V_i\}_{i\in I}$ be a family of
continuous representations 
of $K$ on finite dimensional $\Lbar$-vector spaces, and let $M\in \Mod^{\mathrm{pro}}_K(\OO)$ as in the previous section. 
We make the analogous definition to Definition \ref{defi_capture}.
\begin{defi}\label{capture_alg} We say 
that $\{V_i\}_{i\in I}$ \textit{captures  $M$} if the smallest quotient
$M\twoheadrightarrow Q$, such that
 $\Hom_{\OO\br{K}}^{\rm cont}(Q, V_i^*)\cong \Hom_{\OO\br{K}}^{\rm cont}(M, V_i^*)$ for all $i\in I$ is equal to  $M$. 
\end{defi}

\begin{lem}   Let $\{V_i\}_{i\in I}$ be a family of continuous representations of $K$ on finite dimensional $\Lbar$-vector spaces. For each $i\in I$ let $L_i$ be a finite extension of $L$ contained in $\Lbar$ and let $W_i$ be a $K$-invariant 
$L_i$-subspace of $V_i$ given by Lemma   \ref{finite_structure}, so that $W_i\otimes_{L_i} \Lbar \cong V_i$ is an isomorphism of $K$-representations. Then $\{V_i\}_{i\in I}$ captures $M$ in the sense of Definition \ref{capture_alg} if and 
only if $\{W_i\}_{i\in I}$ captures $M$ in the sense of Definition \ref{defi_capture}.
\end{lem}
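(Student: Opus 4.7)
The plan is to reduce both notions of capture to a common description via Lemma \ref{intersect}, and then use Lemma \ref{homs_the_same} to match up the relevant intersections of kernels. First I would observe that the proof of Lemma \ref{intersect} carries over verbatim from the $L$-setting to the $\bar{L}$-setting, since it only uses that each $V_i^*$ is an object admitting continuous $\OO\br{K}$-homomorphisms from $M$. Thus capture of $M$ by $\{V_i\}_{i\in I}$ in the sense of Definition \ref{capture_alg} is equivalent to
$$N_V := \bigcap_{i\in I}\,\bigcap_{\psi\in\Hom_{\OO\br{K}}^{\cont}(M,V_i^*)} \ker \psi \;=\; 0,$$
while capture of $M$ by $\{W_i\}_{i\in I}$ in the sense of Definition \ref{defi_capture} is equivalent to
$$N_W := \bigcap_{i\in I}\,\bigcap_{\phi\in\Hom_{\OO\br{K}}^{\cont}(M,W_i^*)} \ker \phi \;=\; 0.$$
It therefore suffices to establish the equality $N_V = N_W$.

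Since $V_i \cong W_i \otimes_{L_i} \bar{L}$ as $\bar{L}[K]$-modules, taking $L_i$-linear duals and extending scalars yields an identification $V_i^* \cong W_i^* \otimes_{L_i}\bar{L}$ of $\bar{L}[K]$-modules. The natural inclusion $W_i^* \hookrightarrow V_i^*$ sends any $\phi\in\Hom_{\OO\br{K}}^{\cont}(M, W_i^*)$ to a continuous $\OO\br{K}$-module map $M\to V_i^*$ with the same kernel as $\phi$; this immediately shows $N_V\subseteq N_W$. For the reverse containment, I would invoke Lemma \ref{homs_the_same} applied to $W_i^*$, which yields a natural isomorphism
$$\Hom_{\OO\br{K}}^{\cont}(M, W_i^*) \otimes_{L_i} \bar{L} \;\overset{\cong}{\longrightarrow}\; \Hom_{\OO\br{K}}^{\cont}(M, V_i^*).$$
Consequently, any $\psi\in\Hom_{\OO\br{K}}^{\cont}(M, V_i^*)$ can be written as a finite $\bar{L}$-linear combination $\psi = \sum_j c_j\phi_j$ with $\phi_j\in\Hom_{\OO\br{K}}^{\cont}(M, W_i^*)$, so $\ker\psi \supseteq \bigcap_j \ker\phi_j \supseteq N_W$. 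Intersecting over all $i$ and all $\psi$ gives $N_W \subseteq N_V$, and hence equality.

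There is essentially no hard step: the whole argument is a direct translation between rational structures, and the only non-formal input is the descent of continuous homomorphisms to a finite subextension provided by Lemma \ref{homs_the_same}. The mild point to be careful about is that the finite-dimensionality of $W_i$ is used implicitly to ensure that each $\psi$ is a \emph{finite} $\bar{L}$-linear combination of $L_i$-rational maps, so that the kernel containment is preserved.
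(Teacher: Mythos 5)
Your proof is correct and follows the same route the paper intends: the paper's proof is simply the sentence ``The assertion follows from Lemma \ref{homs_the_same},'' and your argument is precisely the natural unpacking of that, combining the kernel-intersection characterization of capture (Lemma \ref{intersect}, which you correctly observe carries over verbatim to the $\Lbar$-setting) with the rationality isomorphism of Lemma \ref{homs_the_same} applied to $W_i^*$. One small imprecision in your closing remark: the fact that any $\psi \in \Hom_{\OO\br{K}}^{\cont}(M,W_i^*)\otimes_{L_i}\Lbar$ is a finite $\Lbar$-linear combination of $L_i$-rational maps is automatic from the definition of the tensor product and does not hinge on finite-dimensionality of $W_i$; what finite-dimensionality (together with compactness of $M$) buys you is the validity of the isomorphism in Lemma \ref{homs_the_same} itself.
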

 \begin{proof} The assertion follows from Lemma \ref{homs_the_same}.
 \end{proof}
 
 \subsection{Growth of coinvariants}\label{sec_growth}
 
 In this subsection we let $K$ be a uniformly powerful pro-$p$ group; such groups are discussed in great detail in \cite[\S 4]{analytic}.  We note that uniformly powerful pro-$p$ groups are $p$-adic analytic and every compact $p$-analytic group contains an open uniformly powerful subgroup, so the assumption is harmless.  We denote by  $\dim K$ the dimension of $K$ as a $p$-adic manifold. Finitely generated modules over $k\br{K}$ have a good dimension theory, 
 generalizing the Krull dimension, see \cite{ardakov_brown}, \cite{venjakob}. The purpose of this section is to relate this dimension to the growth of coinvariants by certain open 
 subgroups of $K$.
 
Let $M$ be a finitely generated $k\br{K}$-module. We let 
 $$\codim_{k\br{K}}(M):= \min\{i: \Ext^i_{k\br{K}}(M, k\br{K})\neq 0\},$$  
 $$\dim_{k\br{K}}(M):= \dim K-\codim_{k\br{K}}(M).$$
 We will refer to $\dim_{k\br{K}}(M)$ as the $\delta$-dimension of $M$. For $n\ge 0$ let $K_n:=K^{p^n}$ be the closed subgroup generated by the $p^n$-th powers of elements of $K$. Since $K$ is uniformly powerful, $K_n$ is open in $K$ and we have 
 \begin{equation}\label{index_pro_p}
 (K:K_n)= p^{n(\dim K)}, \quad  \forall n\ge 0.
 \end{equation}
 
 If $f, g:\mathbb N\rightarrow \mathbb N$ are functions, which tend to infinity as $n$ goes to infinity, then we write $f\sim g$, if $\log f(n)=\log g(n) + O(1)$ as $n$ tends to infinity. 
 
 For example, 
if $F$ is a finite extension of $\Qp$, $\UU_F$ is the group of units of its ring of integers, and $\UU_F^n$ is the $n$-th congruence subgroup of $\UU_F$ then 
$(\UU_F: \UU_F^n)\sim q_F^n$, where $q_F$ is the number of elements in the residue field of $F$. 
  \begin{prop}\label{growth} Let $d=\dim_{k\br{K}} M$. Then there are real numbers $a\ge b\ge \frac{1}{d!}$ such that 
  $$ bp^{dn} + O(p^{(d-1)n})\le  \dim_k M_{K_n}\le ap^{dn} + O(p^{(d-1)n}).$$ 
  In particular, 
  $\dim_k M_{K_n}\sim (K: K_n)^{\frac{\dim(M)} {\dim (K)}}$.
  \end{prop}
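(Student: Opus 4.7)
The plan is to sandwich the coinvariant module $M_{K_n} = M/I(K_n)M$ between the Hilbert--Samuel quotients $M/I^{p^n}M$ and $M/I^{cp^n}M$ for a constant $c$ independent of $n$, where $I$ denotes the augmentation ideal of $k\br{K}$ and $I(K_n)$ the closed two-sided ideal generated by $\{h-1:h\in K_n\}$ (which is well-defined as $K_n$ is normal in $K$). Once this is done, the result follows from classical Hilbert--Samuel theory. Fix an ordered minimal system of topological generators $g_1,\ldots,g_e$ of $K$ with $e=\dim K$ and set $x_i=g_i-1$; by Lazard's theorem for uniform pro-$p$ groups, $k\br{K}$ admits the PBW basis $\{x^\alpha\}$, and the associated graded ring of the $I$-adic filtration is the commutative polynomial algebra $k[\bar x_1,\ldots,\bar x_e]$. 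By \cite{ardakov_brown,venjakob}, the $\delta$-dimension $d$ of $M$ coincides with the Krull dimension of $\gr_I M$ over this polynomial ring, so standard Hilbert--Samuel theory yields
$$\dim_k M/I^NM \;=\; \frac{e(M)}{d!}\,N^d + O(N^{d-1}),$$
where the multiplicity $e(M)\ge 1$ is a positive integer.

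The crucial comparison of $I(K_n)$ with powers of $I$ uses the Frobenius identity in characteristic $p$. Since $K$ is uniform, every element of $K_n$ has the form $g^{p^n}$ with $g\in K$, and in $k\br{K}$,
$$g^{p^n}-1 \;=\; (g-1)^{p^n} \;\in\; I^{p^n},$$
so $I(K_n)\subseteq I^{p^n}$. This gives the surjection $M_{K_n}\twoheadrightarrow M/I^{p^n}M$ and hence $\dim_k M_{K_n}\ge\dim_k M/I^{p^n}M$. Conversely, the same identity shows $x_i^{p^n}\in I(K_n)$ for each $i$, whence $(x_1^{p^n},\ldots,x_e^{p^n})\subseteq I(K_n)$. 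A pigeonhole argument in the commutative graded ring $k[\bar x_1,\ldots,\bar x_e]$ (together with closedness of $I(K_n)$ and $\bigcap_N I^N = 0$) shows $I^{e(p^n-1)+1}\subseteq(x_1^{p^n},\ldots,x_e^{p^n})$; thus $I^{cp^n}\subseteq I(K_n)$ with $c=e$, giving the upper bound $\dim_k M_{K_n}\le\dim_k M/I^{cp^n}M$.

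Combining these with the Hilbert--Samuel asymptotic produces
$$\frac{e(M)}{d!}\,p^{nd}+O(p^{n(d-1)}) \;\le\; \dim_k M_{K_n} \;\le\; \frac{e(M)c^d}{d!}\,p^{nd}+O(p^{n(d-1)}),$$
which is the desired bound with $b=e(M)/d!\ge 1/d!$ and $a=e(M)c^d/d!$. The final asymptotic $\dim_k M_{K_n}\sim(K:K_n)^{\dim M/\dim K}$ follows from \eqref{index_pro_p}. The principal technical obstacle is establishing the reverse inclusion $I^{cp^n}\subseteq I(K_n)$ with $c$ independent of $n$: the forward inclusion is immediate from Frobenius, whereas the reverse requires exploiting the PBW/Lazard description of $k\br{K}$ to reduce to the pigeonhole estimate in the graded polynomial ring, and using closedness of $I(K_n)$ to pass from the filtration-level inclusion to the genuine one.
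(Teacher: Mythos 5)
Your proof is correct and arrives at the same asymptotics, but the upper bound is obtained by a genuinely different route than the paper's. Where the paper passes to the associated graded module $\gr_\mm(M)$, observes that $\gr_\mm(M)/(x_1^{p^n},\ldots,x_m^{p^n})\gr_\mm(M)$ surjects onto $\gr_\mm(M_{K_n})$, and then bounds the resulting length via the Hilbert--Kunz multiplicity (or, as in the elementary variant attributed to Dospinescu, via the inclusion $\nn^{mp^n}\subseteq\nn^{[p^n]}$ and the ordinary Hilbert--Samuel polynomial), you instead establish the honest ideal inclusion $I^{e(p^n-1)+1}\subseteq I(K_n)$ directly in $k\br{K}$ and then apply Hilbert--Samuel to $M/I^{cp^n}M$. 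Your inclusion is the cleaner statement, and in fact it can be proved without any limiting/lifting argument at all: by the PBW basis for the uniform group $K$, any $f\in I^{N}$ with $N=e(p^n-1)+1$ expands as a convergent sum $\sum_{|\alpha|\ge N} c_\alpha x_1^{\alpha_1}\cdots x_e^{\alpha_e}$, each monomial has some $\alpha_i\ge p^n$ by pigeonhole, and then $x_1^{\alpha_1}\cdots x_e^{\alpha_e}=\bigl(x_1^{\alpha_1}\cdots x_i^{\alpha_i-p^n}\bigr)\cdot x_i^{p^n}\cdot\bigl(x_{i+1}^{\alpha_{i+1}}\cdots x_e^{\alpha_e}\bigr)$ lies in the two-sided ideal generated by the $x_j^{p^n}$; closedness of $I(K_n)$ then finishes. (You gesture at this via ``exploiting the PBW description'' but the graded-to-ungraded lifting you invoke is unnecessary given the direct factorisation.) Your lower bound via $g^{p^n}-1=(g-1)^{p^n}$ is identical to the paper's, and like the paper's elementary variant you avoid appealing to Monsky's theorem. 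The only blemish is the notational collision between $e=\dim K$ and $e(M)$ the Hilbert--Samuel multiplicity (the paper uses $m$ for the former); and you should note explicitly, as the paper does, that $I(K_n)$ is topologically generated as a two-sided ideal by $x_1^{p^n},\ldots,x_e^{p^n}$ because $g_1^{p^n},\ldots,g_e^{p^n}$ topologically generate $K_n$, which uses uniformity.
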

 \begin{proof} Let $g_1, \ldots, g_m$ be a minimal set of topological generators of $K$, let $\mm$ be the maximal ideal of $k\br{K}$. Then there is an isomorphism of graded rings
 $$ \gr^{\bullet}_{\mm} (k\br{K})\cong k[x_1, \ldots, x_m],$$
 which sends $g_i-1 +\mm^2$ to $x_i$, see \cite[Thm.\,3.4]{ardakov_brown}. Since $K$ is uniformly powerful $g_1^{p^n}, \ldots, g_m^{p^n}$ is a minimal set of generators of $K^{p^n}$ 
 \cite[Thm. 3.6 (iii)]{analytic}. Thus if we let $I_n$ be the closed two-sided ideal of $k\br{K}$ generated by $g_1^{p^n}-1, \ldots, g_m^{p^n}-1$, then 
 \begin{equation}\label{du}
 M_{K_n}\cong M/I_n M \twoheadrightarrow M/\mm^{p^n} M.
 \end{equation}
 Let $A$ and $\widetilde{M}$ be the localizations of $ \gr^{\bullet}_{\mm} (k\br{K})$ and $\gr^{\bullet}_{\mm}(M)$ 
 at the ideal $\gr^{>0}_{\mm} (k\br{K})$ respectively, and let $\nn$ be the maximal ideal of $A$. Then 
 \begin{equation}\label{trys}
\dim_ k M/\mm^n M=\sum_{i=0}^{n-1} \dim_k (\mm^i M/\mm^{i+1} M)= \ell(\widetilde{M}/\nn^n \widetilde{M}).
 \end{equation}
Proposition 5.4 of \cite{ardakov_brown} implies that the Krull dimension of $\gr^{\bullet}_{\mm}(M)$, and hence of $\widetilde{M}$, is equal to $d$. We know 
that for large $n$, $\ell(\widetilde{M}/\nn^n \widetilde{M})$ is a polynomial in $n$ of degree $d$ and leading coefficient $e/d!$, where $e$ is the Hilbert--Samuel multiplicity 
of $\widetilde{M}$, see \cite[\S14]{matsumura}. In particular, 
\begin{equation}\label{keturi}
\ell(\widetilde{M}/\nn^{p^n} \widetilde{M}) = \frac{e}{d!} p^{dn} + O(p^{(d-1)n}).
\end{equation}
The surjection $M \twoheadrightarrow M_{K_n}$ induces a surjection of graded modules $$\gr^{\bullet}_{\mm}(M)\twoheadrightarrow \gr^{\bullet}_{\mm}(M_{K_n}).$$ Since $g_i^{p^n}$ act trivially on $M_{K_n}$, the 
surjection factors through 
\begin{equation}\label{penki}
\gr^{\bullet}_{\mm}(M)/(x_1^{p^n}, \ldots, x_m^{p^n})\gr^{\bullet}_{\mm}(M) \twoheadrightarrow \gr^{\bullet}_{\mm}(M_{K_n}).
\end{equation}
Let $\nn^{[p^n]}$ be the ideal of $A$ generated by $x^{p^n}$ for all $x\in \nn$. If $a_1, \ldots, a_m$ is a system of parameters of $A$ then $\nn^{[p^n]}=(a_1^{p^n}, \ldots, a_m^{p^n})$ as $A$ is a regular ring. 
Hence, the localization of $\gr^{\bullet}_{\mm}(M)/(x_1^{p^n}, \ldots, x_m^{p^n})\gr^{\bullet}_{\mm}(M)$ at $ \gr^{>0}_{\mm} (k\br{K})$ is equal to $\widetilde{M}/\nn^{[p^n]} \widetilde{M}$ and we obtain
\begin{equation}\label{sesi}
\dim_k (\gr^{\bullet}_{\mm}(M)/(x_1^{p^n}, \ldots, x_m^{p^n})\gr^{\bullet}_{\mm}(M) )= \ell(\widetilde{M}/\nn^{[p^n]} \widetilde{M}).
\end{equation}
The main result of \cite{monsky} says that there is a real number $e'$, called the Hilbert--Kunz multiplicity,  such that 
 \begin{equation}\label{septyni}
 \ell(\widetilde{M}/\nn^{[p^n]} \widetilde{M})=  e' p^{dn} + O(p^{(d-1)n}).
 \end{equation}
Actually, as G.\,Dospinescu pointed out to us, we don't require the full strength 
of~\cite{monsky}.  Indeed, since $\mathfrak m$ is generated by $m$ generators,
we have  $\mathfrak \nn^{m p^n} \subseteq  \nn^{[p^n]},$
and so (just using the theory of the Hilbert--Samuel multiplicity),
we find that
 \begin{equation}
\label{astuoni}
 \ell(\widetilde{M}/\nn^{[p^n]} \widetilde{M})
\leq \ell(\widetilde{M}/\nn^{m p^n} \widetilde{M}) = \dfrac{e}{d!}m^d p^{dn}
   + O(p^{(d-1)n}),
 \end{equation}
which suffices for our purposes.

In any event,
we conclude from \eqref{du}, \eqref{trys}, \eqref{penki}, and \eqref{sesi}
that
$$\ell(\widetilde{M}/\nn^{p^n} \widetilde{M})\le \dim_k M_{K_n} \le
\ell(\widetilde{M}/\nn^{[p^n]} \widetilde{M}).$$
The assertion then follows from
\eqref{keturi} and \eqref{septyni} (or its variant~\eqref{astuoni}).  \end{proof}

 \begin{remar}Proposition \ref{growth} asserts that \begin{equation}
\label{assymp} \dim_k M_{K_n}\sim (K: K_n)^{\frac{\dim(M)} {\dim (K)}}.
\end{equation} However, we caution the reader that the analogous statement
might not hold for an arbitrary filtration of $K$ by open normal subgroups. For
example, let $K=\Zp \oplus \Zp$ and  let $M=k\br{\Zp}$, on which $K$ acts via
the projection onto the second factor, let $K_n= p^n \Zp \oplus p^{2^n} \Zp$.
Then $\dim_k M_{K_n}= p^{2^n}$ and $(K:K_n)= p^{n+2^n}$, $\dim M=1$ and $\dim
K=2$, so that \eqref{assymp} does not hold. One should modify the statement of
\cite[Thm.\,1.18]{calegari-emerton} accordingly. The authors of
\cite{calegari-emerton} claim that a stronger version of Proposition \ref{growth} is proved in
\cite[\S 5]{ardakov_brown}, however we have not been able to verify this.
\end{remar}
 
 \begin{remar}  A weaker version of this statement follows from Theorem 1.10 in
\cite{harris}, see also \cite{harris_err}.
\end{remar}

 \subsection{Numerical criterion for capture} From now on we assume that $K$ is
a $p$-adic analytic group, which doesn't have any torsion.   This allows for
more general groups $K$ than considered in the previous section. As already
noted every such $K$ contains an open subgroup $K'$, which is a uniformly
powerful pro-$p$ group, see \cite[Thm.\,8.23]{analytic}. For a  finitely
generated $\OO\br{K}$-module $M$ we let $\dim_{\OO\br{K}}(M):=
\dim_{\OO\br{K'}}(M)$. This definition does not depend on the choice of $K'$,
see \cite[\S 5.4]{ardakov_brown}. Since the identity in $K$ has a basis of open
neighbourhoods consisting of open uniformly powerful pro-$p$ subgroups, this
implies that $\dim_{\OO\br{K}}(M)= \dim_{\OO\br{K''}}(M)$ for any open subgroup
$K''$ of $K$. 
 
 Our assumption on $K$ implies that $K$ is pro-$p$ and the completed group
algebras $k\br{K}$ and  $\OO\br{K}$ are integral domains, see
\cite[Thm.\,4.3]{ardakov_brown}.

\begin{lem}\label{dim_cyclic} Let  $M$ be a cyclic $k\br{K}$-module. If
$\dim_{k\br{K}}(M)= \dim K$ then the surjection $k\br{K}\twoheadrightarrow M$
is an isomorphism.  \end{lem}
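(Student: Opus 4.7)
The plan is to translate the dimension hypothesis directly into a non-vanishing $\Hom$, and then use the fact that $k\br{K}$ is a domain.

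Write $M=k\br{K}/I$ for some left ideal $I\subseteq k\br{K}$; the goal is to show $I=0$. By definition, the hypothesis $\dim_{k\br{K}}(M)=\dim K$ is equivalent to $\codim_{k\br{K}}(M)=0$, which in turn means
\[
\Hom_{k\br{K}}(M,k\br{K})=\Ext^{0}_{k\br{K}}(M,k\br{K})\neq 0.
\]
So I would pick a nonzero $k\br{K}$-linear map $\varphi\colon M\to k\br{K}$, and set $a:=\varphi(\bar 1)$, where $\bar 1$ denotes the image of $1\in k\br{K}$ in $M$. Since $M$ is generated by $\bar 1$ and $\varphi$ is nonzero, $a\neq 0$.

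For every $r\in I$ we have $r\cdot\bar 1=0$ in $M$, hence
\[
0=\varphi(r\cdot\bar 1)=r\cdot\varphi(\bar 1)=ra \quad \text{in }k\br{K}.
\]
Invoking the fact, recorded just above the lemma (from \cite[Thm.\,4.3]{ardakov_brown}), that $k\br{K}$ is an integral domain, we conclude from $ra=0$ and $a\neq 0$ that $r=0$. Thus $I=0$ and the surjection $k\br{K}\twoheadrightarrow M$ is an isomorphism.

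The only step that requires any genuine input is the equivalence between the dimension condition and the non-vanishing of $\Hom_{k\br{K}}(M,k\br{K})$, and this is built into the definition of $\codim$ given in \S\ref{sec_growth}. Everything else is a direct manipulation using the cyclicity of $M$ and the absence of zero divisors in $k\br{K}$; there is no real obstacle.
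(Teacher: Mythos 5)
Your proof is correct and is essentially the paper's own argument: the paper also rests on the two facts that $\dim_{k\br{K}}M=\dim K$ is equivalent to $\Hom_{k\br{K}}(M,k\br{K})\neq 0$ and that $k\br{K}$ is a domain. The only cosmetic difference is that the paper first reduces to a quotient by a principal left ideal $\Lambda/\Lambda y$ and applies $\Hom_\Lambda(-,\Lambda)$ to the presentation $\Lambda\xrightarrow{\cdot y}\Lambda\to\Lambda/\Lambda y\to 0$, whereas you work directly with $M=\Lambda/I$ and the element $a=\varphi(\bar 1)$, which streamlines the same idea.
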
 \begin{proof} Let $\Lambda=k \br{K}$. Then
$\dim_{\Lambda} M = \dim K$ if and only if  $\Hom_{\Lambda}(M, \Lambda)\neq 0$.
Since $M$ is cyclic, it is enough to prove that $\Hom_{\Lambda}(\Lambda/
\Lambda y, \Lambda)=0$ for all non-zero $y$ in the maximal ideal of $\Lambda$.
This follows by applying the functor $\Hom_{\Lambda}(\ast, \Lambda)$ to the
exact sequence $\Lambda \overset{\cdot y}{\rightarrow} \Lambda\rightarrow
\Lambda/\Lambda y\rightarrow 0$, and using the fact that our assumptions on $K$
imply that $\Lambda$ is an integral domain, so that multiplication with $y$
induces an injective map from $\Hom_{\Lambda}(\Lambda, \Lambda)$ to itself.
\end{proof}

Let $\{K_n\}_{n\ge 1}$ be a basis of open neighbourhoods of $1$ in $K$
consisting of normal subgroups. We assume that there is $n_0\ge 0$, such that
$K_{n_0}$ is a uniform pro-$p$ group, and for each $m\ge 1$ there is an $n(m)$,
such that $K_{n_0}^{p^m}= K_{n(m)} $.  Let $\mathcal C(K, L)$ be the space of
continuous $L$-valued functions on $K$. Since $K$ is compact the supremum norm
defines a Banach space structure on $\mathcal C(K, L)$; it is an admissible
unitary $L$-Banach space representation of $K$ for the $K$-action given by
right translations.

\begin{lem}\label{cap_ban} Let $\Pi$ be a closed $K$-invariant subspace of
$\mathcal C(K, L)$. If $\dim_L \Pi^{K_n} \sim (K:K_n)$ then $\Pi= \mathcal C(K,
L)$.
\end{lem}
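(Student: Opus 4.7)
The plan is to translate the problem, via the duality between Banach $K$-representations and compact $\OO\br{K}$-modules, into a statement about a cyclic quotient of the Iwasawa algebra $\OO\br{K}$, and then to apply Proposition \ref{growth} together with Lemma \ref{dim_cyclic} to force this quotient to be an isomorphism. More precisely, the inclusion $\Pi \hookrightarrow \mathcal{C}(K,L) = \Pi(\OO\br{K})$ (see Lemma \ref{capture_banach}) dualises to a surjection $\OO\br{K} \twoheadrightarrow M$ with $M = \OO\br{K}/\mathfrak{a}$ cyclic and finitely generated; a standard computation identifies $\Pi^{K_n}$ with $\Hom_{\OO}(M_{K_n}, L)$ and hence $\dim_L \Pi^{K_n}$ with the $\OO$-rank $r_n$ of the finitely generated $\OO$-module $M_{K_n}$. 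Thus the hypothesis reads $r_n \sim (K:K_n)$, and the desired conclusion $\Pi = \mathcal{C}(K,L)$ is equivalent to $\mathfrak{a} = 0$.

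The main bridge to cross is that Proposition \ref{growth} controls $k$-dimensions of coinvariants of a $k\br{K}$-module, while our hypothesis gives information only about $\OO$-ranks. To handle this, I would pass to the maximal $\OO$-torsion-free quotient $M_{\mathrm{tf}} := M/M[\varpi^\infty] = \OO\br{K}/\mathfrak{a}^{\mathrm{sat}}$, where $\mathfrak{a}^{\mathrm{sat}} := \{x \in \OO\br{K} : \varpi^k x \in \mathfrak{a} \text{ for some } k \geq 0\}$ is the $p$-saturation of $\mathfrak{a}$. Inverting $p$ kills the torsion, so $M_{\mathrm{tf},K_n}$ has the same $\OO$-rank $r_n$ as $M_{K_n}$. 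The reduction $\overline{M_{\mathrm{tf}}} := M_{\mathrm{tf}}/\varpi M_{\mathrm{tf}}$ is then a cyclic $k\br{K}$-module satisfying $\dim_k (\overline{M_{\mathrm{tf}}})_{K_n} \geq r_n \sim (K:K_n)$, using that $\dim_k N/\varpi N \geq \rank_{\OO} N$ for any finitely generated $\OO$-module $N$.

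Next I would apply Proposition \ref{growth} to $\overline{M_{\mathrm{tf}}}$, using the uniform pro-$p$ subgroup $K_{n_0}$ and the cofinal family $K_{n_0}^{p^m} = K_{n(m)}$: setting $d := \dim_{k\br{K}} \overline{M_{\mathrm{tf}}}$, one gets $\dim_k (\overline{M_{\mathrm{tf}}})_{K_{n_0}^{p^m}} \sim p^{md}$, while the previous inequality forces this quantity to grow at least like $p^{m\dim K}$; comparing exponents yields $d \geq \dim K$, hence $d = \dim K$. By Lemma \ref{dim_cyclic} the surjection $k\br{K} \twoheadrightarrow \overline{M_{\mathrm{tf}}}$ is then an isomorphism, equivalently $\mathfrak{a}^{\mathrm{sat}} \subseteq \varpi \OO\br{K}$. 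Since $\mathfrak{a}^{\mathrm{sat}}$ is by construction closed under division by $\varpi$ ($\varpi x \in \mathfrak{a}^{\mathrm{sat}}$ implies $x \in \mathfrak{a}^{\mathrm{sat}}$), combining this with $\mathfrak{a}^{\mathrm{sat}} \subseteq \varpi \OO\br{K}$ and iterating yields $\mathfrak{a}^{\mathrm{sat}} \subseteq \bigcap_n \varpi^n \OO\br{K} = 0$, whence $\mathfrak{a} \subseteq \mathfrak{a}^{\mathrm{sat}} = 0$ and the lemma follows. The main obstacle is precisely the middle step: converting the $\OO$-rank hypothesis into $k$-dimension information that Proposition \ref{growth} can consume, which is why the passage to $M_{\mathrm{tf}}$ together with the saturation trick is essential.
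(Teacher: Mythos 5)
Your proof is correct and takes essentially the same route as the paper's: dualize the inclusion $\Pi \hookrightarrow \mathcal C(K,L)$ to a cyclic quotient of $\OO\br{K}$, reduce mod $\varpi$, estimate the $\delta$-dimension via Proposition~\ref{growth}, and conclude with Lemma~\ref{dim_cyclic}. The one detour — passing to the $p$-saturation $\mathfrak a^{\mathrm{sat}}$ and to $M_{\mathrm{tf}}$ — is harmless but unnecessary, since the Schikhof dual $\Theta^d=\Hom_\OO(\Theta,\OO)$ (which is what $M$ is) is automatically $\OO$-torsion free, a fact the paper invokes directly to reduce to the mod-$\varpi$ statement.
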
 
\begin{proof}  Let $\Theta$  be the unit ball in $\Pi$ with respect to the supremum norm on $\mathcal C(K, L)$. The map 
$K\rightarrow \OO\br{K}$, $g \mapsto g$ induces an isomorphism $\mathcal C(K, L)\cong \Hom^{\cont}_{\OO}(\OO\br{K}, L)$ \cite[Cor.2.2]{iw}.
Thus, if we let $\Theta^d:=\Hom_{\OO}(\Theta, \OO)$ then Schikhof's duality induces a surjection of $\OO\br{K}$-modules, $\OO\br{K}\twoheadrightarrow \Theta^d$ \cite[Thm.\,3.5]{iw}. We claim that the surjection is 
an isomorphism. By dualizing back, the claim implies the Lemma. Since $\Theta^d$ is $\OO$-torsion free, it is enough to show that $k\br{K}\twoheadrightarrow \Theta^d/\varpi$ is an isomorphism. 
Lemma \ref{dim_cyclic} implies that it is enough to show that $\dim_{k\br{K}} ( \Theta^d/\varpi)= \dim K$. Since the dimension does not change if we pass to an open subgroup, it is enough 
to show that $\dim_{k\br{K_{n_0}}} ( \Theta^d/\varpi)= \dim K_{n_0}$. Proposition \ref{growth} implies that it is enough to show that $\dim_k (( \Theta^d/\varpi)_{K_n})\sim (K_{n_0} : K_n)$. Since 
$\Theta^d/\varpi \cong (\Theta/\varpi)^{\vee}$, we get that $( \Theta^d/\varpi)_{K_n}\cong (( \Theta/\varpi)^{K_n})^{\vee}$.  Thus 
$$\dim_k (( \Theta^d/\varpi)_{K_n})= \dim_k (( \Theta/\varpi)^{K_n}) \ge \rank_{\OO} \Theta^{K_n}= \dim_L \Pi^{K_n}\sim (K: K_n).$$
On the other hand $$\dim_k (( \Theta^d/\varpi)_{K_n})\le \dim_k k\br{K}_{K_n}=(K:K_n).$$ 
Since $(K: K_n)\sim (K_{n_0}: K_n)$ we obtain the claim. 
\end{proof}

Let $\{V_i\}_{i\in I}$ be a family of pairwise non-isomorphic,  smooth irreducible 
$\overline{L}$-rep\-re\-sen\-ta\-tions of 
$K$.  Let 
$$d(n):= \sum_{i\in I} (\dim_{\overline{L}} V_i^{K_n})^2.$$
We will show in the course of the proof of the next proposition that only finitely many terms in the sum are non-zero, so that $d(n)$ is finite. 

\begin{prop}\label{criterion} If $d(n)\sim (K:K_n)$ then $\{V_i\}_{i\in I}$ captures $\OO\br{K}$ and hence every projective in 
$\Mod^{\pro}_K(\OO)$.
\end{prop}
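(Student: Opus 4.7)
The plan is to apply Lemma \ref{cap_ban} to a suitable $K$-subrepresentation of $\mathcal{C}(K, L)$. By Lemma \ref{product}, it suffices to show $\{V_i\}_{i \in I}$ captures $\OO\br{K}$, and by Lemma \ref{capture_banach}, since $\Pi(\OO\br{K}) \cong \mathcal{C}(K, L)$, we must show that the image of the evaluation map
$$\operatorname{ev}: \bigoplus_{i \in I} \Hom_K\bigl(V_i, \mathcal{C}(K, \overline{L})\bigr) \otimes_{\overline{L}} V_i \longrightarrow \mathcal{C}(K, \overline{L})$$
has dense image (and then descend the conclusion to $L$ via Lemma \ref{finite_structure} and the rationality lemma following Definition \ref{capture_alg}, replacing each $V_i$ by an $L_i$-form $W_i$ and working with the induced image $\Pi \subset \mathcal{C}(K, L)$). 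By Lemma \ref{cap_ban}, density will follow once we establish $\dim_L \Pi^{K_n} \sim (K:K_n)$.

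The heart of the argument is a dimension count exploiting the normality of $K_n$ in $K$. Since each $V_i$ is smooth and irreducible, $V_i^{K_n}$ is $K$-stable, hence either $0$ or all of $V_i$. When $V_i^{K_n} = V_i$, the matrix coefficient map
$$\mu_i: V_i \otimes_{\overline{L}} V_i^{*} \longrightarrow \mathcal{C}(K, \overline{L}), \quad v \otimes \lambda \mapsto \bigl(g \mapsto \lambda(gv)\bigr),$$
is $K \times K$-equivariantly injective (outer tensor of absolutely irreducibles is irreducible), and its image, of $\overline{L}$-dimension $(\dim_{\overline{L}} V_i)^2 = (\dim_{\overline{L}} V_i^{K_n})^2$, is entirely fixed by $K_n$ under right translation (since $K_n$ acts trivially on $V_i$). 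Distinct $V_i$'s contribute linearly independent $K \times K$-isotypic components, so summing gives $\dim_{\overline{L}} (\operatorname{im} \operatorname{ev})^{K_n} = d(n)$. Descending through the $L_i$-forms $W_i$ yields $\dim_L \Pi^{K_n} \geq d(n)$. Combined with the trivial upper bound $\dim_L \Pi^{K_n} \leq \dim_L \mathcal{C}(K, L)^{K_n} = (K:K_n)$ (the latter equality since $K_n$-invariants under right translation are precisely functions factoring through $K/K_n$), the hypothesis $d(n) \sim (K:K_n)$ forces $\dim_L \Pi^{K_n} \sim (K:K_n)$.

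Lemma \ref{cap_ban} then gives $\Pi = \mathcal{C}(K, L)$, so $\operatorname{ev}$ has dense image, and the claim follows from Lemmas \ref{capture_banach} and \ref{product}. The principal technical subtlety is the rationality bookkeeping: Lemma \ref{cap_ban} is phrased over $L$, whereas the $V_i$ are $\overline{L}$-linear, so one must carefully track how the $\overline{L}$-dimension count transfers to the $L$-forms $W_i$. The extra $[L_i : L]$-fold multiplicity arising from the Galois conjugates of $V_i$ only strengthens the lower bound on $\dim_L \Pi^{K_n}$ and so preserves the asymptotic; this is the one place where routine but slightly delicate checking is required, but it presents no essential obstacle.
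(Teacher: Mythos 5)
Your proposal is correct and follows essentially the same strategy as the paper's proof: reduce via Lemmas \ref{product}, \ref{capture_banach} and \ref{cap_ban} to showing $\dim_L \Pi^{K_n} \sim (K:K_n)$ for the closure $\Pi$ of the image of the evaluation map, bound $\dim \Pi^{K_n}$ below by $d(n)$ via matrix coefficients (equivalently, Frobenius reciprocity together with the observation that normality of $K_n$ and irreducibility of $V_i$ force $V_i^{K_n}\in\{0,V_i\}$), and bound it above trivially by $(K:K_n)$. The only real difference is in the rationality bookkeeping, which you handle by tracking Galois multiplicities directly, whereas the paper instead fixes $n$, extends scalars to a finite extension $L_n$ over which all $V_i$ with $i\in I(n)$ are defined, and then uses $(\Pi_{L_n})^{K_n}\cong(\Pi^{K_n})_{L_n}$ to descend — both are valid ways of disposing of the same technical point.
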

\begin{proof} Let $\OO\br{K}\twoheadrightarrow M$ be the smallest quotient such that 
\begin{equation}\label{capture_again}
\Hom^{\cont}_{\OO\br{K}}(\OO\br{K}, V_i^*)=\Hom^{\cont}_{\OO\br{K}}(M, V_i^*),\quad  \forall i\ge 0. 
\end{equation}
Then $M$ is $\OO$-torsion free, and $\Pi:=\Hom_{\OO}^{\cont}(M, L)$ is a closed $K$-invariant subspace of $\mathcal C(K, L)$.  Lemma \ref{cap_ban} and Schikhof duality imply that it is enough 
to show that $\dim_L \Pi^{K_n}\sim (K:K_n)$.  Let $I(n):=\{i\in I: V_i^{K_n}\neq 0\}$. 
Since $V_i$ are absolutely irreducible and $K_n$ are normal in $K$, $V_i^{K_n}=V_i$ for all $i\in I(n)$. In particular, such $V_i$ are representations of a finite group $K/K_n$. Thus, $I(n)$ is a finite set and $d(n)$ is finite. 
Thus  for a fixed $n$  there is  a finite extension $L_n$ of $L$, such that all  $V_i$, $i\in I(n)$ are defined over $L_n$. Alternatively, we could use Lemma \ref{finite_structure}.

 Since $(\Pi_{L_n})^{K_n}\cong (\Pi^{K_n})_{L_n}$, 
after extending the scalars to $L_n$ we may assume that $L_n=L$.
Schikhof duality \cite[Thm.3.5]{iw}, together with \eqref{capture_again},
implies that 
\begin{equation}
\Hom_K(V_i, \Pi)\cong \Hom_K(V_i, \mathcal{C}(K, L)) \cong V_i^*, \quad \forall i\in I(n)
\end{equation}
(the second isomorphism holding by Frobenius reciprocity).
This implies that $$\oplus_{i\in I(n)} V_i\otimes V_i^*$$ occurs as a subspace of $\Pi^{K_n}$, and so $\dim_L \Pi^{K_n}\ge d(n)$. Since $\Pi^{K_n}$ is a subspace of 
$\mathcal C(K, L)^{K_n}$, $\dim_L \Pi^{K_n} \le (K:K_n)$.  Since $d(n)\sim (K:K_n)$ by assumption, we deduce that $\dim_L \Pi^{K_n}\sim (K: K_n)$. 
It follows from Lemma \ref{cap_ban}  and Lemma \ref{capture_banach} that $\{V_i\}_{i\in I}$ captures $\OO\br{K}$. The last assertion follows from Lemma \ref{product}.
\end{proof}

\section{Examples of capture}\label{examples}

\subsection{Locally algebraic representations}
Let $G$ be an affine group scheme of finite type over $\Zp$ such that $G_L$ is a split connected reductive group over $L$. Let 
$\Alg(G)$ be the set isomorphism classes of irreducible rational representations of $G_L$, which we view as representations of $G(\Zp)$ via the inclusion $G(\Zp)\subset G(L)$.
\begin{prop}\label{capture_algebraic} $\Alg(G)$  captures every projective object in $\Mod^{\pro}_{G(\Zp)}(\OO)$.
\end{prop}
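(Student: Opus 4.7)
My plan is to combine a Peter--Weyl style decomposition for the regular functions on $G_L$ with a non-archimedean Stone--Weierstrass density theorem.

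First, by Lemma \ref{product} it suffices to show that the family $\Alg(G)$ captures $\OO\br{K}$, where $K := G(\Zp)$, and by Lemma \ref{capture_banach} (applied to the $\OO$-torsion free projective $\OO\br{K}$) this reduces to showing that the image of
$$
\bigoplus_{V\in\Alg(G)} \Hom_K\bigl(V,\mathcal C(K,L)\bigr)\otimes_L V \longrightarrow \mathcal C(K,L)
$$
is dense in the supremum norm, where we use the identification $\Pi(\OO\br{K})=\mathcal C(K,L)$ coming from Schikhof duality together with the natural map $K\hookrightarrow \OO\br{K}$. By Frobenius reciprocity $\Hom_K\bigl(V,\mathcal C(K,L)\bigr)\cong V^*$, and under this identification the evaluation map above is given by sending $\ell\otimes v$ to the matrix coefficient $g\mapsto \ell(gv)$. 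So what is to be shown is that the space of matrix coefficients of algebraic representations of $G_L$, restricted to $K$, is dense in $\mathcal C(K,L)$.

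Second, since $G_L$ is split connected reductive, the algebraic Peter--Weyl theorem yields a decomposition
$$
\O(G_L) \;\cong\; \bigoplus_{V\in \Alg(G)} V^*\otimes_L V
$$
as a $G_L\times G_L$-module; in particular the $L$-span of the matrix coefficients of the irreducible algebraic representations is precisely the algebra $\O(G_L)$ of regular functions on $G_L$.

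Third, the restriction of $\O(G_L)$ to $K$ is a unital $L$-subalgebra of $\mathcal C(K,L)$, and, because $G$ is an affine group scheme of finite type over $\Zp$, these regular functions separate the $\Zp$-points of $G$ (any two distinct points of $G(\Zp)\subset G(L)$ are distinguished by some regular function, since $G_L$ is a separated scheme). The non-archimedean Stone--Weierstrass theorem (Kaplansky--Dieudonn\'e) then implies that this subalgebra is dense in $\mathcal C(K,L)$, which completes the proof.

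\textbf{Main obstacle.} There is no substantive obstacle; the only mild care needed is verifying the hypotheses of non-archimedean Stone--Weierstrass, chiefly that $\O(G_L)$ separates $\Zp$-points, which is immediate from the affineness of $G$ over $\Zp$. One could alternatively try to verify the numerical criterion of Proposition \ref{criterion} for a cofinal system of principal congruence subgroups $K_n\subset K$, which would amount to showing $\sum_V(\dim V^{K_n})^2\sim (K:K_n)$ via the Weyl dimension formula and a count of algebraic characters of bounded level; this would give a second proof but seems appreciably more cumbersome than the Stone--Weierstrass argument sketched above.
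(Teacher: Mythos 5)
Your argument is correct, and it follows essentially the same route as \cite[Prop.\,2.12]{CDP}, which the paper cites in lieu of giving its own proof: reduce via Lemmas \ref{product} and \ref{capture_banach} to showing that the span of algebraic matrix coefficients is dense in $\mathcal C(K,L)$, identify that span with $\O(G_L)$ by algebraic Peter--Weyl (which relies on $G_L$ being \emph{split}, so the irreducibles in $\Alg(G)$ are absolutely irreducible and $\O(G_L)\cong\bigoplus_{V\in\Alg(G)}V^*\otimes_L V$ holds over $L$ itself), and conclude by Kaplansky's non-archimedean Stone--Weierstrass theorem, since $\O(G_L)$ separates the $\Zp$-points of the affine scheme $G$. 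One small quibble: the parenthetical appeal to separatedness of $G_L$ is a red herring --- it is affineness over $\Zp$ (i.e.\ $G(\Zp)=\Hom_{\Zp\text{-alg}}(\O(G),\Zp)$) that gives point-separation --- but you invoke affineness as well, so the proof is complete as written.
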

\begin{proof} \cite[Prop.\,2.12]{CDP}.
\end{proof}

\begin{cor} Let $W$ be any finite dimensional $L$-representation of $G(\Zp)$, such that the action factors through a finite quotient. Then 
$\{V\otimes_L W\}_{V\in \Alg(G)}$ captures every projective object in $\Mod^{\pro}_{G(\Zp)}(\OO)$.
\end{cor}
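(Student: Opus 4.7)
The plan is to deduce this directly from Lemma \ref{weight_proj} applied to the family $\Alg(G)$, using Proposition \ref{capture_algebraic} as the starting point. First I would observe that the hypothesis of Lemma \ref{weight_proj} is satisfied: we take $K = G(\Zp)$, and the family $\{V_i\}_{i\in I} := \Alg(G)$, which by Proposition \ref{capture_algebraic} captures every projective object in $\Mod^{\pro}_{G(\Zp)}(\OO)$. The representation $W$ is a continuous finite-dimensional $L$-representation of $K$ (continuity is automatic since the action factors through a finite quotient), so the hypotheses of Lemma \ref{weight_proj} are met with this choice.

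Applying Lemma \ref{weight_proj} then immediately yields that $\{V \otimes_L W\}_{V \in \Alg(G)}$ captures every projective in $\Mod^{\pro}_{G(\Zp)}(\OO)$, which is the desired conclusion. There is no real obstacle here; the corollary is a direct combination of the two preceding results, with the role of $W$ being that of an arbitrary ``twist'' by a finite-image representation, for which the tensor product $\OO\br{K} \otimes_\OO \Theta$ (where $\Theta \subset W$ is a $K$-invariant $\OO$-lattice) remains free as an $\OO\br{K}$-module, exactly as exploited in the proof of Lemma \ref{weight_proj}.
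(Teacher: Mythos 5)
Your proof is correct and is exactly the paper's own argument: apply Lemma~\ref{weight_proj} with $K = G(\Zp)$ and the family $\Alg(G)$, which captures every projective by Proposition~\ref{capture_algebraic}. The only marginal observation, which you correctly make, is that the finite-quotient hypothesis on $W$ guarantees continuity, so the hypotheses of Lemma~\ref{weight_proj} are satisfied.
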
 
\begin{proof} This follows from Proposition \ref{capture_algebraic} and Lemma \ref{weight_proj}.
\end{proof}

\subsection{Subgroups and filtrations} Let $F$ be a finite extension of $\Qp$ with the ring of integers $\OO_F$,  the maximal ideal 
$\pp_F$ and a uniformizer $\varpi_F$.  Let $V$ be an $N$-dimensional $F$-vector space. 
Let $A=\End_F(V)$ and let $G=\Aut_F(V)$.  A lattice chain $\mathcal L$ is a set of $\OO_F$-lattices in $V$, $\{L_i: i\in \ZZ\}$, such that 
$L_{i+1}\subset L_i$ for all $i\in \ZZ$  and there is an integer $e=e(\mathcal L)$, called the period of $\mathcal L$,  such that $ \varpi_F L_i= L_{i+e}$, for  all $i\in \ZZ$. 
To a lattice chain we associate a hereditary order, 
$\mathfrak A:= \{ a\in A: a L_i\subset L_i, \forall i\in \ZZ\}$. If $n\in \ZZ$ then let $\mathfrak P^n:=\{a\in A: a L_{i} \subset L_{i+n}, \forall i\in \ZZ\}$. Then 
$\mathfrak P:=\mathfrak P^1$ is the Jacobson radical of $\mathfrak A$, and $\mathfrak P^m \mathfrak P^n= \mathfrak P^{n+m}$, for all $n, m\in \ZZ$.
Let 
$$ \UU(\mathfrak A):=\UU^0(\mathfrak  A):=\mathfrak A^{\times}, \quad \UU^n(\mathfrak A):= 1+ \mathfrak P^n, \quad \forall n\ge 1.$$
Then $\UU(\mathfrak A)$ is a compact open subgroup of $G$ and $\{\UU^n(\mathfrak A)\}_{n\ge 0}$ is a basis of open neighborhoods of $1$ by normal 
subgroups of $\UU(\mathfrak A)$.
\begin{examp}\label{unramified_order} If $\mathcal L=\{ \varpi_F^i L_0: \forall i\in \ZZ\}$, then $\UU(\mathfrak A)=\Aut_{\OO_F}(L_0)$ and $\UU^n(\mathfrak A)$ is the kernel of the
natural map $\Aut_{\OO_F}(L_0)\twoheadrightarrow \Aut_{\OO_F}(L_0/\varpi^n_F L_0).$
\end{examp} 

\begin{lem}\label{p_power} Let $e(\mathcal L|\Qp):=e(\mathcal L) e(F|\Qp)$ and  if $p>2$ then let $c= e(\mathcal L|\Qp)$, if $p=2$ then let $c=2e(\mathcal L|\Qp)$. 
Then $\UU^n(\mathfrak A)^p= \UU^{n+e(\mathcal L|\Qp)}(\mathfrak A)$ for all $n\ge c$.
\end{lem}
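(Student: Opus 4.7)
The plan is to linearize the problem via the $p$-adic logarithm and exponential. Set $e := e(\mathcal L|\Qp)$ for brevity. The first observation is the identity $p\mathfrak A = \mathfrak P^{e}$: indeed, the periodicity condition $\varpi_F L_i = L_{i+e(\mathcal L)}$ translates to $\varpi_F \mathfrak A = \mathfrak P^{e(\mathcal L)}$, and since $p = u\varpi_F^{e(F|\Qp)}$ for a unit $u \in \OO_F^\times$, we get $p\mathfrak A = \mathfrak P^{e}$. In particular, multiplication by $p$ induces a bijection $\mathfrak P^n \iso \mathfrak P^{n+e}$ for every $n$.

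For the inclusion $\UU^n(\mathfrak A)^p \subseteq \UU^{n+e}(\mathfrak A)$, the plan is to expand $(1+x)^p$ for $x \in \mathfrak P^n$ by the binomial theorem. The intermediate terms $\binom{p}{k}x^k$ with $1 \le k \le p-1$ lie in $p \mathfrak P^{kn} \subseteq \mathfrak P^{n+e}$ because $p \mid \binom{p}{k}$, while the extremal term $x^p$ lies in $\mathfrak P^{pn}$, which is contained in $\mathfrak P^{n+e}$ exactly when $(p-1)n \ge e$. This latter inequality is the reason for the choice of threshold $c$: for $p > 2$ it is implied by $n \ge e$, and for $p = 2$ it is implied (and in fact we get strict slack) by $n \ge 2e$.

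For the reverse inclusion $\UU^{n+e}(\mathfrak A) \subseteq \UU^n(\mathfrak A)^p$, I would invoke the $p$-adic logarithm and exponential series
\[
\log(1+x) = \sum_{k\ge 1}(-1)^{k-1}\frac{x^k}{k}, \qquad \exp(x) = \sum_{k\ge 0}\frac{x^k}{k!}.
\]
The standard estimates $x^k/k \in \mathfrak P^{kn - e\, v_p(k)}$ show that $\log$ and $\exp$ converge and define mutually inverse homeomorphisms between $\UU^n(\mathfrak A)$ and $\mathfrak P^n$ provided $n > e/(p-1)$; the thresholds $c = e$ (for $p > 2$) and $c = 2e$ (for $p=2$) are precisely chosen so that this holds with room to spare, and so that both $\UU^n$ and $\UU^{n+e}$ are in the range of bijectivity. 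Because $p$-th powers in an associative algebra commute with the formal identity $\log(g^p) = p\log g$, the log-exp bijection intertwines the $p$-th power map on $\UU^n(\mathfrak A)$ with multiplication by $p$ on $\mathfrak P^n$. The image of the latter is $p\mathfrak P^n = \mathfrak P^{n+e}$, and exponentiating back gives $\UU^n(\mathfrak A)^p = \UU^{n+e}(\mathfrak A)$.

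The main technical point I expect to have to verify is the bijectivity of $\log$ and $\exp$ and the identity $\log(g^p) = p\log g$ in the non-commutative algebra $\mathfrak A$; the first is a routine $p$-adic estimate, and the second is a formal consequence of the fact that $g$ commutes with itself (so Baker--Campbell--Hausdorff corrections do not appear). The slight asymmetry in $c$ between $p > 2$ and $p = 2$ is precisely the price paid for the $k = 2$ term $x^2/2$ in $\log$, which forces a larger bound on $n$ when $p = 2$.
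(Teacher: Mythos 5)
Your proof is correct and takes essentially the same route as the paper's: the forward inclusion via the binomial expansion of $(1+x)^p$ (with the extremal term $x^p$ dictating the threshold $c$), and the reverse inclusion via the mutually inverse $\exp$ and $\log$ homeomorphisms on $\mathfrak P^n$ for $n\ge c$. The only cosmetic difference is that the paper directly exhibits $\exp\bigl(\log(1+y)/p\bigr)$ as a $p$-th root of $1+y$, whereas you phrase the same computation as $\exp$ intertwining multiplication by $p$ on $\mathfrak P^n$ with the $p$-th power map on $\UU^n(\mathfrak A)$.
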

\begin{proof} If $x\in \mathfrak P^n$ then $(1+x)^p\in 1 + p\mathfrak P^n=  \UU^{n+e(\mathcal L|\Qp)}(\mathfrak A)$. This yields one inclusion. For the other 
inclusion, we observe that the series defining $\exp$ converges on $p \mathfrak A=\mathfrak P^c$, if $p>2$, and  on $4 \mathfrak A= \mathfrak P^c$, if $p=2$, to give 
a homeomorhism $\exp: \mathfrak P^c\rightarrow \UU^{c}(\mathfrak A)$, with the inverse map given by the series for $\log$. If $y\in \mathfrak P^{n+e(\mathcal L|\Qp)}$
and $n\ge c$, then $\log(1+y)/p \in \mathfrak P^n$, and hence $\exp(\log(1+y)/p)\in \UU^n(\mathfrak A)$ is a $p$-th root of $1+y$. 
\end{proof}

\begin{prop}\label{uniform} Let $c$ be the constant defined in Lemma \ref{p_power}. If $n\ge c$ then $\UU^n(\mathfrak A)$ is a uniform pro-$p$ group. 
\end{prop}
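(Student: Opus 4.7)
The plan is to verify the three defining properties of a uniform pro-$p$ group for $G := \UU^n(\mathfrak A)$ with $n \ge c$: topological finite generation, powerfulness, and torsion-freeness, and then invoke the standard fact that a finitely generated powerful torsion-free pro-$p$ group is uniform (see \cite[Thm.\,4.5]{analytic}).

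Topological finite generation is immediate: each successive quotient $\UU^m(\mathfrak A)/\UU^{m+1}(\mathfrak A) \cong \mathfrak P^m/\mathfrak P^{m+1}$ is a finite abelian $p$-group, since $\mathfrak A$ is a finitely generated $\OO_F$-module, so $G = \varprojlim_{m \ge n} G/\UU^m(\mathfrak A)$ is the inverse limit of finite $p$-groups with uniformly bounded minimal number of generators.

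For powerfulness, the key input is the commutator estimate $[\UU^n(\mathfrak A), \UU^n(\mathfrak A)] \subseteq \UU^{2n}(\mathfrak A)$: if $u = 1+x$ and $v = 1+y$ with $x, y \in \mathfrak P^n$, then the identity
\begin{equation*}
[u,v] - 1 = (uv - vu)u^{-1}v^{-1} = (xy - yx)u^{-1}v^{-1}
\end{equation*}
places $[u,v]-1$ in $\mathfrak P^{2n}\cdot \mathfrak A = \mathfrak P^{2n}$. Combined with Lemma \ref{p_power}, which gives $G^p = \UU^{n+e(\mathcal L|\Qp)}(\mathfrak A)$ for $n \ge c$: when $p$ is odd so $c = e(\mathcal L|\Qp)$, we have $2n \ge n + c$, whence $[G,G] \subseteq \UU^{2n}(\mathfrak A) \subseteq G^p$; when $p = 2$ so $c = 2e(\mathcal L|\Qp)$, we have $2n \ge n + 2e(\mathcal L|\Qp)$, whence $[G,G] \subseteq \UU^{2n}(\mathfrak A) \subseteq \UU^{n+2e(\mathcal L|\Qp)}(\mathfrak A) = G^{p^2} = G^4$. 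This is exactly the powerful condition in each case.

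For torsion-freeness, I would re-use the exponential bijection $\exp : \mathfrak P^c \to \UU^c(\mathfrak A)$ constructed in the proof of Lemma \ref{p_power}, whose restriction is a bijection $\mathfrak P^n \to \UU^n(\mathfrak A)$ for $n \ge c$. Since $x$ commutes with itself, $\exp(x)^p = \exp(px)$; if $g = \exp(x) \in G$ satisfies $g^p = 1 = \exp(0)$, injectivity of $\exp$ forces $px = 0$, and since $\mathfrak P^n$ is a torsion-free $\Zp$-module we conclude $x = 0$ and $g = 1$. Thus $G$ has no element of order $p$, and being pro-$p$ is torsion-free. The only mild subtlety is running the two cases $p$ odd and $p = 2$ in parallel, which is precisely why the definition of $c$ in Lemma \ref{p_power} incorporates the factor $2$ when $p = 2$.
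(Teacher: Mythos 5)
Your proof follows essentially the same route as the paper's: establish that $G := \UU^n(\mathfrak A)$ is topologically finitely generated, powerful, and torsion-free, then invoke \cite[Thm.\,4.5]{analytic}. The commutator estimate giving $[G,G] \subseteq \UU^{2n}(\mathfrak A)$ and the exponential argument for torsion-freeness are both exactly the paper's arguments (the paper uses the isomorphism $\UU^n/\UU^m \cong \mathfrak P^n/\mathfrak P^m$ for $m \le 2n$ to see that $[G,G] \subseteq \UU^{2n}(\mathfrak A)$, which is the same content packaged slightly differently).

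The one place where your argument has a genuine gap is the finite generation step. You assert that $G$ is ``the inverse limit of finite $p$-groups with uniformly bounded minimal number of generators,'' deducing this from the fact that each layer $\UU^m(\mathfrak A)/\UU^{m+1}(\mathfrak A)$ is a finite abelian $p$-group. But knowing each successive quotient is finite abelian does \emph{not} bound the number of generators of the partial quotients $G/\UU^m(\mathfrak A)$: for instance, the pro-$p$ group $\prod_{i\ge 1}\ZZ/p$ with the filtration by $\prod_{i>m}\ZZ/p$ has every layer cyclic of order $p$, yet the quotients $(\ZZ/p)^m$ need $m$ generators and the group is not topologically finitely generated. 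So the ``so'' in that sentence hides a non-trivial claim that you haven't justified. The paper's route avoids this entirely: Lemma~\ref{p_power} gives $G^p = \UU^{n+e(\mathcal L|\Qp)}(\mathfrak A)$, an \emph{open} subgroup of $G$, so $G/G^p$ is finite; since the Frattini subgroup of a pro-$p$ group contains $G^p$, it follows that $G/\Phi(G)$ is finite, hence $G$ is topologically finitely generated. I'd suggest replacing your finite generation paragraph with this argument, which is both correct and shorter.
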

\begin{proof} If $2n\ge m$ then the map $g\mapsto g-1$ induces an isomorphism of groups
\begin{equation}\label{nice_quotient}
\UU^{n}(\mathfrak A)/\UU^{m}(\mathfrak A)\overset{\cong}{\rightarrow}\mathfrak P^n/\mathfrak P^m,
\end{equation}
where the group operation on the right-hand-side is addition, and so both groups are abelian. Lemma \ref{p_power} implies that the derived subgroup 
$[\UU^n(\mathfrak A), \UU^n(\mathfrak A)]$ is contained in $\UU^n(\mathfrak A)^p$, if $p>2$ and in $\UU^n(\mathfrak A)^4$ if $p=2$. Hence, $\UU^n(\mathfrak A)$ is powerful.  It follows from 
\eqref{nice_quotient} and Lemma \ref{p_power} that $\UU^n(\mathfrak A)/ \UU^n(\mathfrak A)^p$ is finite. Since $\UU^n(\mathfrak A)$ is a pro-$p$
group, this implies that $\UU^n(\mathfrak A)$ is finitely generated. Since $\exp(m a)= \exp(a)^m$, for $m\in \ZZ$,  and $\mathfrak P^n$ is a torsion free $\Zp$-module, the homeomorphism 
$\exp: \mathfrak  P^n \cong \UU^n(\mathfrak A)$ implies that $\UU^n(\mathfrak A)$ is torsion free. Theorem 4.5 of \cite{analytic} implies that $\UU^n(\mathfrak A)$ is uniformly powerful. 
\end{proof}

\subsection{Principal series types for $\GL(N, F)$}\label{sec_prince} We keep the notation of the previous subsection, but fix a basis of $V$ and identify $G=\GL(N, F)$.
Let $\UU_F:=\UU^0_F:=\OO_F^{\times}$ and $\UU^i_F=1+\pp_F^i$ for $i\ge 1$. Let $T$ the subgroup of diagonal matrices in $G$ and 
let $T_i$ be the subgroup of $T$ with all diagonal entries in $\UU^i_F$. 

For $n\ge 1$ let $I^n$ be the inverse image in $\GL(N, \OO_F)$ of unipotent upper-triangular matrices 
in $\GL(N, \OO_{F}/\pp_F^n)$ under the reduction map. Then $I^n$ is a compact $p$-adic analytic group.

\begin{lem} There is $n_0$ such that $I^n$ is torsion free for all $n\ge n_0$.
\end{lem}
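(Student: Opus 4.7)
My plan is to show that for $n$ sufficiently large, conjugation by a suitable diagonal element of $\GL(N,F)$ sends $I^n$ into a principal congruence subgroup $\Gamma^m := 1 + \varpi_F^m M_N(\OO_F)$, and then deduce torsion freeness of $I^n$ from the torsion freeness of $\Gamma^m$ for $m$ large.

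Concretely, for a positive integer $m$, let
$$d_m := \mathrm{diag}\bigl(\varpi_F^{(N-1)m}, \varpi_F^{(N-2)m}, \ldots, \varpi_F^m, 1\bigr) \in \GL(N,F).$$
For $g=(g_{ij})\in I^n$ one computes $(d_m g d_m^{-1})_{ij} = \varpi_F^{(j-i)m} g_{ij}$. A short case analysis handles the three regions: diagonal entries $g_{ii}\in 1+\pp_F^n$ are unchanged; upper-triangular entries ($i<j$, with $g_{ij}\in\OO_F$) become elements of $\pp_F^{(j-i)m}\subseteq \pp_F^m$; and lower-triangular entries ($i>j$, with $g_{ij}\in\pp_F^n$) become elements of $\pp_F^{n-(i-j)m}$, which lies in $\pp_F^m$ precisely when $n\ge Nm$. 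Hence $d_m I^n d_m^{-1}\subseteq \Gamma^m$ as soon as $n\ge Nm$, and the conjugate also lies in $\GL(N,\OO_F)$.

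Next I apply Proposition~\ref{uniform} to the trivial lattice chain of Example~\ref{unramified_order}, for which $\mathfrak A = M_N(\OO_F)$ and $\UU^m(\mathfrak A) = \Gamma^m$: this shows $\Gamma^m$ is a uniform pro-$p$ group for all $m\ge c$, where $c$ is the constant of Lemma~\ref{p_power} computed with $e(\mathcal L\mid\Qp) = e(F\mid\Qp)$. Uniform pro-$p$ groups are torsion free; in our setting this may also be deduced directly from the homeomorphism $\exp\colon \mathfrak P^m \to \UU^m(\mathfrak A)$ constructed in the proof of Lemma~\ref{p_power}, combined with the trivial identity $\exp(X)^k = \exp(kX)$ and the $\Zp$-torsion freeness of $\mathfrak P^m$: if $(1+Y)^k=1$ and $Y=\exp(X)-1$, then injectivity of $\exp$ on $\mathfrak P^m$ forces $kX=0$, hence $X=0$.

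Setting $n_0 := Nc$, for every $n\ge n_0$ the group $I^n$ is conjugate in $\GL(N,F)$ (via $d_c$) to a subgroup of the torsion-free group $\Gamma^c$, and is therefore itself torsion free. There is no serious obstacle; the only real computation is the matrix conjugation. The conceptual point is that the non-congruence upper-triangular entries built into the definition of $I^n$ can be absorbed by a sufficiently spread-out diagonal conjugation, at the cost of tightening the lower-triangular entries, and the bound $n\ge Nc$ is exactly what is required for the latter still to land in $\pp_F^c$.
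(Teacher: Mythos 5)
Your proof is correct and follows essentially the same approach as the paper: the paper conjugates $I^n$ (for $n\ge Nm$) by the diagonal matrix $t$ with $t_{ii}=\varpi_F^{(N-i)m}$ into the principal congruence subgroup mod $\varpi_F^m$, and then invokes the $\exp$-homeomorphism argument from the last part of the proof of Proposition~\ref{uniform} to conclude torsion-freeness for $m$ large. You spell out the matrix computation and the appeal to Proposition~\ref{uniform} in a bit more detail, but the underlying idea and the bound $n\ge Nm$ are identical.
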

\begin{proof} Let $t$ be a diagonal matrix with diagonal entries $t_{ii}= \varpi_F^{(N-i) m}$, $1\le i\le N$,  with $n\ge Nm $. 
A matrix in $g\in t I^n t^{-1}$ is  congruent to $1$ modulo $\varpi_F^m$. It follows from the last part 
of the proof of Proposition \ref{uniform} that if $m$ is large enough then 
$g$ cannot be a torsion element unless it is equal to the identity. 
\end{proof} 
  
We choose $n_0$, such that $I^{n_0}$ is torsion free and $n_0> N$.  We say that a smooth character $\chi: \UU^{m}_F\rightarrow \Lbar^{\times}$ has conductor $i$, if $\chi$ is trivial on $\UU^{i+1}_F$ and nontrivial on $\UU^i_F$. 
For $n\ge n_0$ let $X(n)$ be the set of ordered $N$-tuples $(\chi_1, \ldots, \chi_N)$ of  smooth characters of $\UU^{n_0}_F$, such that $\chi_i$ has conductor $n-i+1$. Then for $n\ge n_0 +N$ we have
\begin{equation} \label{size_X_n}
|X(n)|= \prod_{i=1}^N (|\UU^{n_0}_F/\UU^{n-i+1}_F| -|\UU^{n_0}_F/\UU^{n-i}_F|)\sim q_F^{nN}.
\end{equation}
Let 
$$\mathfrak J^n:=\begin{pmatrix}\pF^n & \OO_F & \ldots &&  \OO_F\\
                                                          \pF^n  & \pF^{n-1}&\OO_F& \ldots & \OO_F\\ 
                                                          \vdots &   \vdots &    & \ddots         & \vdots\\
                                                          
                                                          \pF^n & \pF^{n-1} & \ldots & &  \OO_F  \\
                                                          \pF^n & \pF^{n-1} & \ldots & & \pF^{n-N+1}\end{pmatrix}.$$

If $x, y\in \mathfrak J^n$ then $x+y$ and  $xy$ lie in $\mathfrak J^n$. Hence, 
$$ J^n:= 1 +\mathfrak J^n$$
is an open subgroup of $G$ normalized by $T^0$.  We may interpret an element of $X(n)$ as a character of $T^{n_0}$, which maps a 
diagonal matrix $(d_1, \ldots, d_N)$ to $\chi_1(d_1)\ldots \chi_N(d_N)$. Given such $\chi\in X(n)$ we may extend it to a character of $T^{n_0} J^{n+1}$ by letting $\chi(tk)=\chi(t)$ for all $t\in T^{n_0}$ 
and $k\in J^{n+1}$. Let 
$$ V_{\chi}:= \Indu{T^{n_0} J^{n+1}}{I^{n_0}}{\chi}.$$ 
 
 \begin{prop}\label{types_prince}  If $\pi$ is a smooth irreducible $\Lbar$-representation of $G$ then 
$$\Hom_{I^{n_0}}( V_{\chi}, \pi)\neq 0 \iff\pi\cong \Indu{B}{G}{\psi}$$ 
with $\psi|_{T^{n_0}}= \chi_1\otimes\ldots\otimes \chi_N$.
 \end{prop}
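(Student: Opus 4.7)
By Frobenius reciprocity, $\Hom_{I^{n_0}}(V_\chi,\pi)\cong \Hom_{T^{n_0}J^{n+1}}(\chi,\pi)$, so the task is to characterize irreducible $\pi$ possessing a non-zero vector $v$ on which $T^{n_0}$ acts by $\chi_1\otimes\cdots\otimes\chi_N$ and $J^{n+1}$ acts trivially. The key structural observation I will use throughout is the Iwahori-style factorization $J^{n+1}=(J^{n+1}\cap \bar U)\,(J^{n+1}\cap T)\,(J^{n+1}\cap U)$, where $U$ and $\bar U$ are the upper and lower unipotent radicals of $B$: on reading off the entries of $\mathfrak J^{n+1}$ one sees $J^{n+1}\cap U=U(\OO_F)$ is the \emph{entire} upper unipotent part of $\GL_N(\OO_F)$, while $J^{n+1}\cap\bar U$ and $J^{n+1}\cap T$ are deep congruence subgroups whose depth at row/column $i$ matches the conductor $n-i+1$ of $\chi_i$.

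For the direction $(\Leftarrow)$, suppose $\pi=\Ind_B^G\psi$ with $\psi|_{T^{n_0}}=\chi_1\otimes\cdots\otimes\chi_N$. I would exhibit $v$ explicitly as a section of $\Ind_B^G\psi$ supported on the open subset $BJ^{n+1}\subset G$ (open because $J^{n+1}\cdot B\supset(J^{n+1}\cap\bar U)\cdot B$ is a neighborhood of the big Bruhat cell), with $f(bj)=\delta_B^{1/2}(b)\psi(b)$. Well-definedness reduces to checking $\psi|_{B\cap J^{n+1}}=1$: the intersection is $(J^{n+1}\cap T)\cdot U(\OO_F)$, and the $i$-th diagonal of $J^{n+1}\cap T$ lies in $\UU_F^{n-i+2}$, where $\chi_i$ is trivial by the conductor assumption. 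The transformation $tf=\chi(t)f$ for $t\in T^{n_0}$ and $J^{n+1}$-invariance follow from $T^{n_0}$ normalizing $J^{n+1}$ together with $\delta_B|_{T^0}=1$.

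For the harder direction $(\Rightarrow)$, given such a $v\in\pi$, the plan has three steps. First, rule out that $\pi$ is supercuspidal: the vector $v$ is fixed by $U(\OO_F)$, so lies in a very large ``positive'' fixed space, and one can pair $v$ against its matrix coefficients on the semigroup $T^{+}\subset T$ contracting $U(\OO_F)$; by the Jacquet--Casselman canonical lifting, for sufficiently dominant $t\in T^+$ the vectors $t\cdot v$ survive to a non-zero image in $\pi_U$ (equivalently, one invokes that $(T^{n_0}J^{n+1},\chi)$ is a cover of a type for the torus in the sense of Bushnell--Kutzko, hence is a type for the principal-series Bernstein component $[T,\psi]_G$ and does not intertwine with simple types on any proper Levi; cf.\ Roche). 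Second, with $\pi$ non-cuspidal, $\pi$ is a subquotient of $\Ind_B^G\psi'$ where $\psi'$ occurs in $\pi_U$; tracking the $T^{n_0}$-eigencharacter of the image of $v$ through Casselman's pairing shows $\psi'|_{T^{n_0}}$ lies in the Weyl orbit of $\chi_1\otimes\cdots\otimes\chi_N$, and after replacing $\psi$ by a Weyl conjugate (which does not change $\Ind_B^G\psi$ up to isomorphism) we may assume $\psi|_{T^{n_0}}=\chi$. Third, the characters $\chi_i$ have pairwise distinct conductors, so $\psi^w\neq\psi$ for every non-trivial $w$ in the Weyl group, which by the Bernstein--Zelevinsky irreducibility criterion for regular principal series forces $\Ind_B^G\psi$ to be irreducible; combined with the subquotient statement this yields $\pi\cong\Ind_B^G\psi$.

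The main obstacle is step one of $(\Rightarrow)$: genuinely excluding supercuspidal $\pi$. The cleanest route, and the one I would follow, is to invoke the Bushnell--Kutzko/Roche theory of types, showing that the pair $(T^{n_0}J^{n+1},\chi)$ is a $[T,\psi]_G$-type, so by the defining property of types the only $\pi\in\Irr(G)$ intertwining with $\chi$ lie in the indicated Bernstein component. A self-contained substitute is an intertwining calculation: one computes $I_G(\chi)=\{g\in G:\Hom_{T^{n_0}J^{n+1}\cap {}^gT^{n_0}J^{n+1}}(\chi,{}^g\chi)\neq 0\}$ and shows it equals $T\cdot(T^{n_0}J^{n+1})$, which by Mackey's criterion forces any $\pi$ containing $\chi$ to be a subquotient of $\Ind_B^G\psi$ with $\psi|_{T^{n_0}}=\chi$; the $U(\OO_F)$-invariance and conductor regularity again conclude the argument.
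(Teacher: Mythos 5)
Your plan is a genuinely different route from the paper's, and it is substantially heavier. The paper sidesteps almost everything you are doing: it extends $\chi$ to a character $\tilde\chi$ of the \emph{full} torus $T^0$, notes that $W_{\tilde\chi}:=\Ind_{T^0J^{n+1}}^{K}\tilde\chi$ is (by Bushnell--Kutzko \cite{BK_cover}, via \cite[Prop.\,3.10]{HS}) an irreducible $K$-type for the principal-series Bernstein block $[T^0,\tilde\chi]$, observes that $\Ind_{I^{n_0}}^{K}V_\chi\cong\bigoplus_{\tilde\chi}W_{\tilde\chi}$ (sum over extensions of $\chi$), and concludes by Frobenius reciprocity. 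The crucial trick is the extension to $T^0$: the pair $(T^{n_0}J^{n+1},\chi)$ on its own is not a Bushnell--Kutzko type --- $T^{n_0}$ is a deep congruence subgroup, not a maximal compact of $T$ --- so working directly with it, as you do, forces you into an intertwining/Jacquet-module computation that the paper simply avoids.

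Beyond the structural heaviness, there are two concrete problems with your $(\Rightarrow)$ direction. First, in step three you claim that $\psi^w\neq\psi$ for all $w\neq 1$ forces irreducibility of $\Ind_B^G\psi$ ``by the Bernstein--Zelevinsky irreducibility criterion for regular principal series.'' That is not the Bernstein--Zelevinsky criterion, and the implication is false: regularity of the inducing character controls multiplicity one of the constituents, not irreducibility. Irreducibility fails exactly when some $\psi_i/\psi_j=|\cdot|^{\pm 1}$, which is perfectly compatible with $\psi^w\neq\psi$. The conclusion you want is nonetheless true, but for a different reason: the $\chi_i$ have pairwise distinct (and $\ge n_0\ge 1$) conductors, so each ratio $\psi_i/\psi_j$ is \emph{ramified}, hence cannot equal the unramified character $|\cdot|^{\pm 1}$. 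Second, your ``self-contained substitute'' for ruling out supercuspidals is not actually self-contained: computing the intertwining set $I_G(\chi)=T\cdot T^{n_0}J^{n+1}$ and invoking Mackey does not, on its own, place $\pi$ in the principal-series Bernstein component. What one needs is the Bushnell--Kutzko \emph{cover} formalism (the Hecke-algebra isomorphism $\mathcal H(G,T^{n_0}J^{n+1},\chi)\cong\mathcal H(T,T^{n_0},\chi)$), which is precisely the content of the type-theoretic input you were trying to avoid. If you commit fully to the BK/Roche cover route for step one, then steps two and three become redundant (the cover gives an equivalence of categories with a block of $\Rep(T)$ whose simple objects are full inductions), and you are back to essentially the paper's argument, minus the clean reduction to $T^0$-characters.

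Your $(\Leftarrow)$ direction, constructing an explicit section of $\Ind_B^G\psi$ supported on $BJ^{n+1}$ and checking $\psi|_{B\cap J^{n+1}}=1$ via the conductor condition and the identification $J^{n+1}\cap T\subset T^{n_0}$ with diagonal entries in $\UU_F^{n-i+2}$, is correct in outline and is a nice elementary substitute for Frobenius reciprocity on that side.
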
 
 \begin{proof} Let $\tilde{\chi}: T^0\rightarrow \Lbar^{\times}$ be any character of $T^0$, such that $\tilde{\chi}|_{T^{n_0}}=\chi$ and let $W_{\tilde{\chi}}:=\Indu{T^0 J^{n+1}}{K}{\tilde{\chi}}$, 
 where $K=\GL_N(\OO_F)$. It follows from \cite{BK_cover}, see \cite[Prop.\,3.10]{HS}, that $W_{\tilde{\chi}}$ is an irreducible representation of $K$ and is a type for the inertial Bernstein 
 component $[T^0, \tilde{\chi}]$. This implies that  $\Hom_{K}(W_{\tilde{\chi}}, \pi)\neq 0$ if and only if $\pi\cong \Indu{B}{G}{\psi}$ with $\psi|_{T^{0}}= \tilde{\chi}$. Since 
 \begin{equation}\label{induce_up}
 \Indu{I^{n_0}}{K}{V_\chi}\cong \Indu{T^{n_0} J^{n+1}}{K}{\chi}\cong \Indu{T^0J^{n+1}}{K}{(\Indu{T^{n_0} J^{n+1}}{T^0 J^{n+1}}{\chi})}\cong \bigoplus_{\tilde{\chi}} W_{\tilde{\chi}}, 
 \end{equation}
 where the sum is taken over all $\tilde{\chi}: T^0\rightarrow \Lbar^{\times}$, such that $\tilde{\chi}|_{T^{n_0}}= \chi$, the assertion follows from Frobenius reciprocity.
 \end{proof}
 
 \begin{defi} $X:= \cup_{n\ge 1} X(n)$.
 \end{defi}
 
 \begin{lem}\label{irreducible_distinct} The representation $V_{\chi}$ is irreducible. If $\chi_1, \chi_2\in X$ are distinct then $V_{\chi_1}\not \cong V_{\chi_2}$. 
 \end{lem}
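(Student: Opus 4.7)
The statement has two parts: (i) irreducibility of $V_\chi$, and (ii) pairwise non-isomorphism of $V_{\chi}$ for distinct $\chi \in X$. My plan is to prove (ii) as a direct consequence of \eqref{induce_up} and Proposition~\ref{types_prince}, and to prove (i) by a Mackey decomposition.

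For (ii), suppose $V_{\chi_1} \cong V_{\chi_2}$ with $\chi_1, \chi_2 \in X$. Applying the exact functor $\Indu{I^{n_0}}{K}{\ast}$ and using \eqref{induce_up}, we would obtain an isomorphism of $K$-representations
$$\bigoplus_{\tilde{\chi}_1} W_{\tilde{\chi}_1} \cong \bigoplus_{\tilde{\chi}_2} W_{\tilde{\chi}_2},$$
where $\tilde{\chi}_j$ runs over all characters of $T^0$ extending $\chi_j$. As noted in the proof of Proposition~\ref{types_prince}, each $W_{\tilde{\chi}}$ is irreducible, and the $W_{\tilde{\chi}}$ are types for different inertial Bernstein components as $\tilde{\chi}$ varies, hence pairwise non-isomorphic. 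Since any $\tilde\chi_j$ satisfies $\tilde{\chi}_j\vert_{T^{n_0}} = \chi_j$, the two indexing sets of extensions are disjoint whenever $\chi_1 \ne \chi_2$. Krull--Schmidt then yields a contradiction.

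For (i), set $H := T^{n_0} J^{n+1}$. By Frobenius reciprocity and Mackey's decomposition formula,
$$\End_{I^{n_0}}(V_\chi) \cong \bigoplus_{w \in H\backslash I^{n_0}/H} \Hom_{H \cap wHw^{-1}}\bigl(\chi, {}^w\chi\bigr),$$
where ${}^w\chi(x) = \chi(w^{-1}xw)$. The identity double coset contributes a one-dimensional summand, so it suffices to show that every other double coset contributes zero, i.e.\ that for $w \in I^{n_0} \setminus H$, the restrictions $\chi\vert_{H\cap wHw^{-1}}$ and ${}^w\chi\vert_{H\cap wHw^{-1}}$ disagree. I would choose double coset representatives adapted to the ``Iwahori-type'' decomposition $I^{n_0} = U^-_{n_0}\, T^{n_0}\, U^+$ relative to $H$, and then argue coordinate-by-coordinate using the conductor filtration defining $J^{n+1}$. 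The crucial input is that the characters $\chi_1,\ldots,\chi_N$ have \emph{distinct, strictly decreasing} conductors $n, n-1, \ldots, n-N+1$: conjugation by a nontrivial element $w \in I^{n_0}/H$ either permutes diagonal coordinates of $T^{n_0}$ or mixes in off-diagonal unipotent contributions, and in either case it produces a matching equation of the form $\chi_i\vert_{\UU_F^m} = \chi_j\vert_{\UU_F^m}$ with $i\neq j$ (or a similar identity involving $\chi_i$ evaluated on a well-chosen unit) which is incompatible with the distinct conductors.

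The technical heart and main obstacle is the last step: parametrising the double coset space $H\backslash I^{n_0}/H$ explicitly enough to exhibit, for each nontrivial $w$, a concrete element of $H\cap wHw^{-1}$ on which $\chi$ and ${}^w\chi$ differ. I expect this to be straightforward for $w$ in the unipotent radical directions (where the computation reduces to commutators of the form $[t,u]$ lying in $T^{n_0}$ with known diagonal entries), but more delicate along the torus directions, where one must carefully use that the conductor of $\chi_i$ is exactly $n-i+1$ (not just ``bounded by'' $n-i+1$) to rule out accidental invariance.
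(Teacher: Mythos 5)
For part (ii) your argument is essentially the paper's: apply $\Indu{I^{n_0}}{K}{\ast}$, invoke \eqref{induce_up} and the irreducibility of the types $W_{\tilde\chi}$ established in the proof of Proposition~\ref{types_prince}, and conclude (by Frobenius reciprocity or, as you phrase it, Krull--Schmidt) that distinct $\chi$ give non-isomorphic $V_\chi$. One small point, which the paper also leaves implicit: $W_{\tilde\chi_1}\cong W_{\tilde\chi_2}$ a priori only forces $\tilde\chi_1$ and $\tilde\chi_2$ into the same Weyl orbit, not literal equality; it is the strictly decreasing conductor pattern in the definition of $X(n)$ that rules out nontrivial Weyl conjugates extending any $\chi\in X$ and lets one conclude $\tilde\chi_1 = \tilde\chi_2$, hence $\chi_1=\chi_2$.

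For part (i), your plan --- a Mackey decomposition of $\End_{I^{n_0}}(V_\chi)$ over $H := T^{n_0}J^{n+1}$ followed by an explicit double-coset analysis --- is a genuinely different route from the paper's, and as you yourself flag, you do not carry out its technical heart: for each nontrivial $w\in H\backslash I^{n_0}/H$ you would need to exhibit an element of $H\cap wHw^{-1}$ on which $\chi$ and ${}^w\chi$ disagree, and you have neither parametrized these double cosets nor run the coordinate computation. This is a real gap, not a routine one, and it is exactly the sort of hands-on intertwining computation that the paper is structured to avoid.

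The paper instead recycles the work already done in Proposition~\ref{types_prince} via a dimension count. By \eqref{induce_up}, $\Indu{I^{n_0}}{K}{V_\chi}\cong\bigoplus_{\tilde\chi}W_{\tilde\chi}$, the sum over extensions $\tilde\chi$ of $\chi$ to $T^0$; since the $W_{\tilde\chi}$ are irreducible and pairwise non-isomorphic, $\dim\Hom_K\bigl(\Indu{I^{n_0}}{K}{V_\chi},\Indu{I^{n_0}}{K}{V_\chi}\bigr) = |T^0/T^{n_0}|$. On the other hand $\Indu{I^{n_0}}{K}{V_\chi} = \Indu{H}{K}{\chi}$, and by Mackey this endomorphism dimension equals the number of double cosets $HgH$ in $K$ that intertwine $\chi$. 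The group $T^0J^{n+1}$ visibly sits inside the $K$-intertwining of $\chi$ and contributes exactly $|H\backslash T^0J^{n+1}/H| = |T^0/T^{n_0}|$ double cosets; since this already exhausts the computed dimension, the $K$-intertwining of $\chi$ is exactly $T^0J^{n+1}$. Intersecting with $I^{n_0}$ gives $I^{n_0}\cap T^0J^{n+1} = T^{n_0}J^{n+1} = H$, which by Mackey is precisely the statement that $V_\chi = \Indu{H}{I^{n_0}}{\chi}$ is irreducible. The idea you are missing is to bound the intertwining from below by a visible subgroup and from above by the known endomorphism dimension, and observe that the two bounds coincide --- no explicit double-coset analysis is then needed.
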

 \begin{proof} Let $\chi_1\in X(n_1)$ and $\chi_2\in X(n_2)$ and let $\tilde{\chi}_1$ and $\tilde{\chi}_2$ be characters of $T^0$ extending $\chi_1$, and $\chi_2$, respectively. 
 We use the notation introduced in the proof of Proposition \ref{types_prince}. If $\Hom_K(W_{\tilde{\chi}_1}, W_{\tilde{\chi}_2})\neq 0$ then $W_{\tilde{\chi}_1}\cong W_{\tilde{\chi}_2}$. 
Since these representations are types for the components $[T^0, \tilde{\chi}_1]$ and 
$[T^0, \tilde{\chi}_2]$ respectively, we conclude that $\tilde{\chi}_1=\tilde{\chi}_2$. It follows from
\eqref{induce_up} and Frobenius reciprocity, that if $\chi_1\neq \chi_2$ then $\Hom_{I^{n_0}}(V_{\chi_1}, V_{\chi_2})=0$, and if $\chi_1=\chi_2=\chi$ then 
$$\dim_{\Lbar}\Hom_K( \Indu{T^{n_0}J^{n+1}}{K}{\chi},  \Indu{T^{n_0}J^{n+1}}{K}{\chi})= \dim_{\Lbar} \Hom_K(  \Indu{I^{n_0}}{K}{V_\chi}, \Indu{I^{n_0}}{K}{V_\chi})$$
is equal to the number of characters $\tilde{\chi}: T^0\rightarrow \Lbar^{\times}$, such that $\tilde{\chi}|_{T^{n_0}}= \chi$, which is equal to 
$$ | T^0/ T^{n_0}|=|T^{n_0} J^{n+1}\backslash T^{0} J^{n+1} / T^{n_0} J^{n+1}|.$$
Since $T^0 J^{n+1}$ is contained in the $K$-intertwining of $\chi$, we deduce that the $K$-intertwining of $\chi$ is equal to $T^0 J^{n+1}$. Since $I^{n_0} \cap T^0 J^{n+1}= T^{n_0} J^{n+1}$, 
we deduce that the $I^{n_0}$-intertwining of $\chi$ is equal to $T^{n_0} J^{n+1}$ and so $V_{\chi}$ is irreducible.
 \end{proof}
  
For $n\ge 1$ let $K_n$ be the $n$-th congruence subgroup in $\GL(N, \OO_F)$. 
\begin{lem}\label{asymptotic_prince} 
$$ \sum_{\chi\in X} (\dim V_{\chi}^{K_{n+1}})^2 \sim (I_{n_0}: K_{n+1}).$$ 
\end{lem}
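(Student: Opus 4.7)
The plan is to isolate which $\chi \in X$ contribute to the sum, and then to bound the total contribution using the Plancherel formula for the finite group $I^{n_0}/K_{n+1}$. The first step is to establish the dichotomy
\[
\dim V_\chi^{K_{n+1}} = \begin{cases} \dim V_\chi, & \text{if } n_0 \le m \le n,\\ 0, & \text{if } m > n, \end{cases}
\]
for $\chi \in X(m)$. Since $V_\chi$ is irreducible by Lemma~\ref{irreducible_distinct} and $K_{n+1}$ is normal in $I^{n_0}$, the subspace $V_\chi^{K_{n+1}}$ is an $I^{n_0}$-subrepresentation, hence equals $0$ or $V_\chi$; the latter holds exactly when $K_{n+1}$ acts trivially on $V_\chi$, which by the standard analysis of inductions happens iff $K_{n+1}\subseteq T^{n_0}J^{m+1}$ and $\chi|_{K_{n+1}}=1$. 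For $m\le n$, an entry-wise check gives $\pF^{n+1}M_N(\OO_F)\subseteq \mathfrak J^{n+1}\subseteq \mathfrak J^{m+1}$, hence $K_{n+1}\subseteq J^{n+1}\subseteq T^{n_0}J^{m+1}$; the conductor hypothesis ($\chi_i$ has conductor $m-i+1\le n$) then makes each $\chi_i$ trivial on $\UU^{n+1}_F$, so $\chi|_{K_{n+1}}=1$. For $m>n$, the column-one below-diagonal entries of $\mathfrak J^{m+1}$ lie in $\pF^{m+1}$, while those of $K_{n+1}$ lie only in $\pF^{n+1}\not\subseteq\pF^{m+1}$, so $K_{n+1}\not\subseteq T^{n_0}J^{m+1}$ and irreducibility forces $V_\chi^{K_{n+1}}=0$.

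For the upper bound, I would invoke the Plancherel formula for the finite group $I^{n_0}/K_{n+1}$ over the characteristic-zero field $\Lbar$. By Lemma~\ref{irreducible_distinct} the $V_\chi$ are pairwise non-isomorphic as $\chi$ ranges over $X$, so $\{V_\chi:\chi\in X(m),\ n_0\le m\le n\}$ is a family of pairwise non-isomorphic irreducible $\Lbar$-representations of $I^{n_0}/K_{n+1}$, yielding
\[
\sum_{\chi\in X}(\dim V_\chi^{K_{n+1}})^2 \;=\; \sum_{m=n_0}^n\sum_{\chi\in X(m)}(\dim V_\chi)^2 \;\le\; |I^{n_0}/K_{n+1}| \;=\; (I^{n_0}:K_{n+1}).
\]

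For the matching lower bound, it suffices to examine the top slice $m=n$. A direct count of matrix entries modulo $\pF^{n+1}$, using that the $(i,j)$-entry of $\mathfrak J^{n+1}$ is $\pF^{n-j+2}$ for $j\le i$ and $\OO_F$ for $j>i$, gives $[I^{n_0}:T^{n_0}J^{n+1}]\sim q_F^{nN(N-1)/2}$. Combined with $|X(n)|\sim q_F^{nN}$ from~\eqref{size_X_n}, this yields
\[
\sum_{\chi\in X(n)}(\dim V_\chi)^2 \;\sim\; q_F^{nN}\cdot q_F^{nN(N-1)} \;=\; q_F^{nN^2} \;\sim\; (I^{n_0}:K_{n+1}),
\]
which matches the upper bound. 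The main technical obstacle is the bookkeeping in the index computation $[I^{n_0}:T^{n_0}J^{n+1}]$: one must compare $\mathfrak J^{n+1}$ against $\pF^{n+1}M_N(\OO_F)$ column-by-column to extract the correct power of $q_F$. Once this counting is in hand, both the irreducibility dichotomy and the Plancherel step are immediate.
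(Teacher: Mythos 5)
Your proposal is correct and follows essentially the same approach as the paper: establish the dichotomy $\dim V_\chi^{K_{n+1}} \in \{0, \dim V_\chi\}$, use the Plancherel bound $\sum (\dim V_\chi)^2 \le (I^{n_0}:K_{n+1})$ for distinct irreducibles of $I^{n_0}/K_{n+1}$ as the upper estimate (which the paper uses implicitly), and match it with the lower estimate coming from $X(n)$ via the dimension and cardinality counts. The one step you assert without verification is $(I^{n_0}:K_{n+1})\sim q_F^{nN^2}$; the paper proves this in \eqref{index} via the Iwahori decomposition $I^{n_0} = (I^{n_0}\cap U^-)T^{n_0}(I^{n_0}\cap U)$, and you should supply an analogous count to close the sandwich argument.
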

\begin{proof} 
If $\chi\in X(n)$ then  $K_{n+1}$ acts trivially on $V_{\chi}$ and   $K_n$ does not act trivially on $V_{\chi}$, and, since $V_{\chi}$ is irreducible,  $V_{\chi}^{K_n}=0$. Hence, it is enough to check that 
\begin{equation}\label{growth_n}
 \sum_{\chi\in X(n)} (\dim V_{\chi})^2 \sim (I^{n_0}: K_{n+1}).
 \end{equation}
Let $U^-$ be the subgroup of unipotent lower triangular matrices in $G$. Then 
\begin{equation} \label{dim_V_chi}
 \begin{split} 
 \dim V_{\chi}&= ( I^{n_0}: T^{n_0} J^{n+1})= (I^{n_0}\cap U^-: J^{n+1} \cap U^-)\\
 & = \prod_{i=1}^{N-1}  q_F^{(N-i)(n-n_0-i)}\sim q_F^{n N(N-1)/2}
 \end{split}
 \end{equation}
Since $|X(n)|\sim q_F^{nN}$ by \eqref{size_X_n} the right-hand-side of \eqref{growth_n} grows as $q_F^{n N^2}$. Since
\begin{equation} \label{index}
(I^{n_0}: K_{n+1})= q_F^{(n+1 -n_0) N(N-1)/2} q_F^{(n+1-n_0) N} q_F^{(n+1)N(N-1)/2}\sim q_F^{n N^2},
\end{equation} 
we deduce  that \eqref{growth_n} holds. 
\end{proof}

\begin{prop} The set $\{ V_{\chi}\}_ {\chi\in X}$ captures every projective object in $\Mod^{\pro}_{I^{n_0}}(\OO)$.
\end{prop}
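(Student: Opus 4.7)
The plan is to apply the numerical criterion for capture (Proposition~\ref{criterion}) with $K = I^{n_0}$ and the family $\{V_\chi\}_{\chi \in X}$, after verifying each of its hypotheses. First, $I^{n_0}$ is a torsion-free compact $p$-adic analytic group by construction (this is precisely why $n_0$ was chosen). Second, I take $\{K_n\}_{n \geq 1}$ to be the standard principal congruence filtration in $\GL(N,\OO_F)$ intersected with $I^{n_0}$, which (for $n \geq n_0$) are just the congruence subgroups themselves. These are normal in $\GL(N,\OO_F)$ and hence in $I^{n_0}$, and under the identification with the unit filtration $\UU^n(\mathfrak A)$ of the maximal order in Example~\ref{unramified_order}, Proposition~\ref{uniform} and Lemma~\ref{p_power} show that for $n$ large enough $K_n$ is uniformly powerful with $K_n^{p^m} = K_{n+m\cdot e(\mathcal L|\Qp)}$, giving the filtration hypothesis of Proposition~\ref{criterion}.

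Third, Lemma~\ref{irreducible_distinct} already shows that $\{V_\chi\}_{\chi \in X}$ is a family of pairwise non-isomorphic smooth irreducible $\overline{L}$-representations of $I^{n_0}$. Finally, the numerical asymptotic $d(n) \sim (I^{n_0}:K_n)$ has essentially been done in Lemma~\ref{asymptotic_prince}: one checks that for $\chi \in X(n)$ the representation $V_\chi$ is $K_{n+1}$-fixed but has no $K_n$-fixed vectors (since its restriction to $T^{n_0}$ contains the character $\chi$, which is nontrivial on $T^{n_0} \cap K_n$), so that
$$d(n+1) = \sum_{\chi \in X}(\dim V_\chi^{K_{n+1}})^2 = \sum_{\chi \in X(n)}(\dim V_\chi)^2,$$
and the displayed equivalence of Lemma~\ref{asymptotic_prince} finishes the input.

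With all hypotheses verified, Proposition~\ref{criterion} yields that $\{V_\chi\}_{\chi \in X}$ captures $\OO\br{I^{n_0}}$, and then Lemma~\ref{product} promotes this to capture of every projective object in $\Mod^{\pro}_{I^{n_0}}(\OO)$. Since all the genuine work has been packaged into the preceding lemmas, the proof is essentially an assembly; the only mild subtlety to keep track of is the off-by-one shift between the filtration index $n$ in $X(n)$ and the congruence subgroup $K_{n+1}$ that annihilates the corresponding $V_\chi$, but this is harmless for the asymptotic comparison because $(I^{n_0}:K_n) \sim (I^{n_0}:K_{n+1})$ on the logarithmic scale implicit in the relation~$\sim$.
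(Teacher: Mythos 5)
Your proof is correct and follows essentially the same route as the paper: both apply Proposition~\ref{criterion} with $K=I^{n_0}$ and the principal congruence filtration $\{K_n\}$, with Lemma~\ref{asymptotic_prince} supplying the asymptotic. Your verification of the hypotheses of Proposition~\ref{criterion} (torsion-freeness, the uniform filtration via Proposition~\ref{uniform} and Lemma~\ref{p_power}, pairwise non-isomorphism via Lemma~\ref{irreducible_distinct}, the off-by-one re-indexing) just spells out in more detail what the paper leaves implicit, and note that the final ``hence every projective'' step is already built into the statement of Proposition~\ref{criterion}, so the explicit appeal to Lemma~\ref{product} is redundant but harmless.
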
 
\begin{proof} The assertion follows from Proposition \ref{criterion} applied with $K=I^{n_0}$ and $K_n$ as in Example \ref{unramified_order} with $L_0= \OO_F^{\oplus N}$, 
		using Lemma \ref{asymptotic_prince}.
\end{proof} 

\subsection{Supercuspidal types for $\GL(N, F)$}\label{sec_super}
Let $F$ be a finite extension of $\Qp$ and let $E$ be a field extension of $F$ of degree $N$. Let $f=f(E|F)$ and $e=e(E|F)$ denote the inertial degree and the ramification 
index of $E$ over $F$ respectively. 

\begin{defi}[{\cite[(1.4.14)]{BK}}] An element $\alpha\in E$ is minimal over $F$ if $e$ is prime to $v_E(\alpha)$ and $\varpi_F^{-v_E(\alpha)} \alpha^e +\pp_E$ generates 
the field extension $k_E/k_F$. 
\end{defi}

 \begin{remar} If $\alpha$ is minimal then $E=F[\alpha]$, since the fields have the same ramification indices and inertial degrees over $F$.
 \end{remar}
 
 \begin{remar} If $\alpha$ is minimal  over $F$ then $\alpha +a$ is also minimal over $F$ for all $a\in E$ with $v_E(a)> v_E(\alpha)$.
 \end{remar}

In general, not every extension $E$ will contain an element which is minimal over~$F$, as the following example shows. 
 
 \begin{examp} Let $f$ be an even integer, $e=p^{f/2}+1$, $F=\Qp$ and let $E$ be the compositum of $\Qp(p^{1/e})$ and the unramified extension 
 of $\Qp$ of degree $f$.  If $x\in (k_E^{\times})^e$ then  $x^{p^{f/2}}=x$, and hence $x$ cannot generate $k_E$ over $k_F$. Any $\alpha\in E$ will 
be of the form $p^{n/e}\xi$ for some $\xi\in \OO_E^{\times}$ and the image of $\varpi^{-n} \alpha^e$ in $k_E$ will be equal to the image of $\xi^e$, 
and hence cannot generate $k_E$ over $k_F$. 
\end{examp}
 
 However, there are plenty extensions that do. For example, let $F'$ be a finite unramified extension of $F$,  
 and let $f(x)=x^e+\ldots + \varpi_F \xi$ be an Eisenstein polynomial in $F'[x]$, such that $\xi +\pp_{F'}$ generates $k_{F'}/k_F$ then $E=F'[x]/(f(x))$ is a 
 field and $x +(f(x))$ is  minimal over $F$. In particular, all unramified or totally ramified extensions will contain a minimal element. 
 
   \begin{lem}\label{many_min} If $\alpha\in E$ is minimal over $F$ then $\alpha^m$ is also minimal over $F$ for all $m$ prime to $e(q_E-1)$. 
 \end{lem}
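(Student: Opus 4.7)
\medskip

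\noindent\textbf{Proof plan.} The plan is to verify the two conditions in the definition of minimality for $\alpha^m$ separately, using the two factors of the hypothesis $\gcd(m, e(q_E-1))=1$ in turn.

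First, since $v_E(\alpha^m) = m\, v_E(\alpha)$, the condition $\gcd(e, v_E(\alpha^m))=1$ reduces to showing $\gcd(e,m)=1$, which is immediate from $\gcd(m, e(q_E-1))=1$, combined with the minimality of $\alpha$ (which gives $\gcd(e, v_E(\alpha))=1$).

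Second, and this is the substantive step, set
\[
\beta := \varpi_F^{-v_E(\alpha)}\alpha^e \bmod \pp_E.
\]
Since $v_E(\varpi_F)=e$, a direct computation shows $v_E(\varpi_F^{-v_E(\alpha)}\alpha^e)=0$, so $\beta \in k_E^\times$, and by minimality of $\alpha$ the element $\beta$ generates $k_E$ over $k_F$. The element attached to $\alpha^m$ in the definition is
\[
\varpi_F^{-v_E(\alpha^m)}(\alpha^m)^e = \bigl(\varpi_F^{-v_E(\alpha)}\alpha^e\bigr)^m,
\]
whose image in $k_E$ is $\beta^m$. Thus it suffices to prove that $\beta^m$ also generates $k_E$ over $k_F$.

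For this, let $\phi(x)=x^{q_F}$ be the Frobenius, which generates $\Gal(k_E/k_F)$, of order $f$. The element $\beta$ generates $k_E/k_F$ iff its $\langle\phi\rangle$-orbit has size $f$, i.e.\ no proper power $\phi^i$ ($0<i<f$) fixes $\beta$. Suppose $\phi^i(\beta^m)=\beta^m$, i.e.\ $(\phi^i(\beta)/\beta)^m=1$. Since $\phi^i(\beta)/\beta \in k_E^\times$ has order dividing $q_E-1$, and $\gcd(m,q_E-1)=1$ by hypothesis, this forces $\phi^i(\beta)=\beta$, hence $f\mid i$. Therefore $\beta^m$ has the same stabilizer in $\langle\phi\rangle$ as $\beta$, and so generates $k_E$ over $k_F$ as well. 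This establishes the second condition of minimality for $\alpha^m$ and completes the argument. The only mildly delicate point is ensuring that no information is lost when passing from $\beta$ to $\beta^m$, which is exactly where the hypothesis $\gcd(m,q_E-1)=1$ enters.
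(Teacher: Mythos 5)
Your proof is correct and follows essentially the same approach as the paper's. The core mechanism is identical: since $\gcd(m, q_E-1)=1$, the $m$-th power map permutes $k_E^\times$ without losing information, so $\beta$ generates $k_E/k_F$ iff $\beta^m$ does. The paper phrases this via element orders ($x$ generates iff its order in $k_E^\times$ divides no $q_F^d-1$ with $d\mid f$, $d<f$, and $x$ and $x^m$ have the same order when $\gcd(m,q_E-1)=1$), while you phrase it via Galois orbits (the $\langle\phi\rangle$-stabilizers of $\beta$ and $\beta^m$ coincide); these are the same observation in slightly different bookkeeping. One small merit of your write-up is that you explicitly verify the first minimality condition $\gcd(e, v_E(\alpha^m))=1$, which uses the factor $e$ in the hypothesis $\gcd(m, e(q_E-1))=1$ and which the paper's proof leaves implicit.
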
 
 \begin{proof} If $x\in k_E$ then $x$ generates $k_E$ over $k_F$ if and only if it is not contained in any proper subfield, which 
 is equivalent to the order of $x$ in the group $k_E^{\times}$  not dividing $q_F^d-1$ for all divisors $d$ of $f$ with $d<f$. 
 If $m$ is prime to $q_F^f-1$ then $x$ and $x^m$ have the same order, which implies the assertion. 
 \end{proof}
 
 From now on we assume that $E$ contains an element that is minimal over $F$. It follows from Lemma \ref{many_min} that for any $n\ge 0$ there 
 exists $\alpha\in E$ minimal over $F$, such that $v_E(\alpha)< -n$. 
 
 Let $A=\End_F(E)$ and $G=\Aut_F(E)$, so that $G\cong \GL(N, F)$, $E\subset A$ and $E^{\times}\subset G$.  Let $\mathfrak A$ be the hereditary $\OO_F$-order in $A$ associated to 
 the lattice chain $\{\pp_E^i: i\in \ZZ\}$ and let $\Pp$ be its  Jacobson radical. Then $\Pp^n = \{a\in A: a \pp_E^i\subset \pp_E^{i+n}, \forall i\in \ZZ\}$  for all $n\in \ZZ$.
 We let $\UU(\Aa)=\UU^0(\Aa)= \Aa^{\times}$ and  $\UU^n(\Aa)=1+ \Pp^n$ for $n\ge 1$. Then for all $n\ge 0$, $\UU^n(\Aa)$ is a normal subgroup of $\UU(\Aa)$ normalized by $E^{\times}$. 

Let $\alpha\in E$ be minimal with respect to $E/F$ with $v_E(\alpha)=-n<0$. Then $[\Aa, n, 0, \alpha]$ is a simple stratum in the sense of \cite[(1.5.5)]{BK}. For $m\ge 0$, we let 
$$ H^{m}(\alpha):= \UU_E^m \UU^{[n/2]+1}(\Aa), \quad J^m(\alpha):= \UU_E^m \UU^{[(n+1)/2]}(\Aa).$$
Since $\alpha$ is minimal and $[E:F]=N$, these groups coincide with those defined in \cite[(3.1.14)]{BK}. If it is clear that $v_E(\alpha)$ is fixed and is equal to $-n$ then we will 
write $H^{m}$ and $J^m$.

We fix an additive character  $\psi_F: F \rightarrow \Lbar^{\times}$, which is trivial on $\pF$ and  non-trivial on $\OO_F$. If $b\in A$ then $\psi_b: A\rightarrow \Lbar^{\times}$ is the function 
$a\mapsto \psi_F(\tr_A(b(a-1)))$. The restriction of $\psi_{\alpha}$ to $\UU^{[n/2]+1}(\mathfrak A)$ defines  a character, which is trivial on $\UU^{n+1}(\Aa)$ and non-trivial on $\UU^n(\Aa)$. 
Let $\mathcal C(m, \alpha)$ be the set of characters $\theta: H^{m+1}(\alpha)\rightarrow \Lbar^{\times}$, such that the restriction of $\theta$ to $\UU^{[n/2]+1}(\mathfrak A)$ is equal to $\psi_{\alpha}$.
Since $\alpha$ is minimal and $[E:F]=N$, $\mathcal C(m, \alpha)$ is the set of simple characters defined in \cite[(3.2.1)]{BK}. If $\theta\in \mathcal C(m, \alpha)$ then the $G$-intertwining of $\theta$ is 
equal to $E^{\times} J^{m+1}(\alpha)$. It follows  from \cite[(8.3.3)]{BF}, using \cite[(3.4.1)]{BK}, that there is a unique irreducible representation $\eta(\theta)$ of $J^{m+1}(\alpha)$, such that 
$\Hom_{H^{m+1}(\alpha)}(\theta, \eta(\theta))\neq 0$. Thus, 
\begin{equation}\label{handle_eta} 
\Indu{H^{m+1}(\alpha)}{J^{m+1}(\alpha)}{\theta}\cong \eta(\theta)^{\oplus a},
\end{equation}
where $a$ is an integer such that $a \dim \eta(\theta)= (J^{m+1}(\alpha): H^{m+1}(\alpha))$. It is shown in  \cite[(8.3.3)]{BF} that 
\begin{equation}\label{dim_eta}
(\dim \eta(\theta))^2= (J^{m+1}(\alpha): H^{m+1}(\alpha))=(\UU^{[(n+1)/2]}(\Aa): \UU^{[n/2]+1}(\Aa)).
\end{equation}
In particular, $a=\dim \eta(\theta)$ and is equal to $1$ if $n$ is odd, and is equal to $\sqrt{(\Aa: \Pp)}$ if $n$ is even. 

\begin{defi} 
$V(\alpha, \theta):=\Indu{J^{m+1}(\alpha)}{\UU^{m+1}(\Aa)}{\eta(\theta)}$. 
\end{defi}

 \begin{lem} $V(\alpha, \theta)$ is an irreducible representation of $\UU^{m+1}(\Aa)$. 
 \end{lem}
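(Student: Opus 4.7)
The plan is to apply Mackey's irreducibility criterion. Since $J^{m+1}(\alpha)$ is open, hence of finite index, in the compact group $\UU^{m+1}(\Aa)$, the induced representation $V(\alpha,\theta)$ is irreducible if and only if the $\UU^{m+1}(\Aa)$-intertwining of $\eta(\theta)$ coincides with $J^{m+1}(\alpha)$; equivalently, for every $g\in \UU^{m+1}(\Aa)\smallsetminus J^{m+1}(\alpha)$ one must have
\[\Hom_{J^{m+1}(\alpha)\cap gJ^{m+1}(\alpha)g^{-1}}(\eta(\theta),\,g\cdot\eta(\theta))=0.\]
Combined with irreducibility of $\eta(\theta)$ and Schur's lemma on the diagonal double coset, this gives one-dimensional self-Hom for $V(\alpha,\theta)$, i.e.\ the desired irreducibility.

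First I would invoke the standard Bushnell--Kutzko fact that the $G$-intertwining of $\eta(\theta)$ coincides with that of $\theta$: this is part of the same uniqueness statement (\cite[(5.1.8)]{BK}, or \cite[(8.3.3)]{BF}) that was used above to define $\eta(\theta)$. Together with the description of the $G$-intertwining of $\theta$ stated in the paragraph preceding \eqref{handle_eta}, this yields
\[I_G(\eta(\theta))=I_G(\theta)=E^{\times}J^{m+1}(\alpha).\]

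Next I intersect with $\UU^{m+1}(\Aa)$. Suppose $g=ej$ with $e\in E^{\times}$, $j\in J^{m+1}(\alpha)$, and $g\in \UU^{m+1}(\Aa)$. Since $J^{m+1}(\alpha)\subset \UU^{m+1}(\Aa)$ (built into the running setup, as both $\UU_E^{m+1}$ and $\UU^{[(n+1)/2]}(\Aa)$ sit inside $\UU^{m+1}(\Aa)$), we get $e=gj^{-1}\in \UU^{m+1}(\Aa)$, so
\[e\in E^{\times}\cap \UU^{m+1}(\Aa)=\UU_E^{m+1}\subset J^{m+1}(\alpha),\]
and hence $g\in J^{m+1}(\alpha)$. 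Thus $I_{\UU^{m+1}(\Aa)}(\eta(\theta))=\UU^{m+1}(\Aa)\cap E^{\times}J^{m+1}(\alpha)=J^{m+1}(\alpha)$, and Mackey's criterion concludes the proof.

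The main obstacle is citing the identification $I_G(\eta(\theta))=I_G(\theta)$; the inclusion $\subset$ is automatic by restricting intertwiners along \eqref{handle_eta}, whereas the reverse inclusion, together with one-dimensionality of the intertwining spaces, is exactly the content of the Bushnell--Kutzko uniqueness theorem that produced $\eta(\theta)$. Once this is granted, the remainder is elementary group-theoretic bookkeeping inside the lattice $\{\UU^{i}(\Aa)\}_{i\ge 0}$.
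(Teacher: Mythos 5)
Your proof is correct and takes essentially the same approach as the paper's: both identify the $\UU^{m+1}(\Aa)$-intertwining of $\eta(\theta)$ as $J^{m+1}(\alpha)$, by first noting that the $G$-intertwining of $\eta(\theta)$ equals that of $\theta$ (which the paper extracts directly from \eqref{handle_eta}, and you obtain from the Bushnell--Kutzko uniqueness theorem underlying \eqref{handle_eta}), namely $E^{\times}J^{m+1}(\alpha)$, and then intersecting with $\UU^{m+1}(\Aa)$. You simply make explicit the Mackey criterion and the computation $E^{\times}\cap\UU^{m+1}(\Aa)=\UU_E^{m+1}\subset J^{m+1}(\alpha)$, both of which the paper leaves tacit.
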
 
 \begin{proof} It follows from \eqref{handle_eta} that the $G$-intertwining of $\eta(\theta)$ is equal to the $G$-intertwining of $\theta$. Hence, 
 the $\UU^{m+1}(\Aa)$-intertwining of $\eta(\theta)$ is equal to $\UU^{m+1}(\Aa)\cap E^{\times} J^{m+1}(\alpha)= J^{m+1}(\alpha)$. Hence, 
 $V(\alpha, \theta)$ is irreducible.
 \end{proof}

 \begin{prop}\label{types_super}  If $\pi$ is a smooth irreducible $\Lbar$-representation of $G$ then 
\begin{equation}\label{equiv_super}
\Hom_{\UU^{m+1}(\Aa)}( V(\alpha, \theta), \pi)\neq 0 \iff\pi\cong \cIndu{E^{\times}J^0(\alpha)}{G}{\Lambda},
\end{equation}
where $\Lambda$ is an  irreducible representation, such that $\Lambda|_{H^{m+1}(\alpha)}$ is isomorphic to a direct sum of copies of $\theta$. In particular, 
$\pi$ is supercuspidal. 

Moreover, if $E$ is a tame extension of $F$ and $\rho$ is the $N$-dimensional representation of $W_F$ corresponding to $\pi$ via the 
local Langlands correspondence, then \eqref{equiv_super} is equivalent to $\rho\cong \Indu{W_E}{W_F}{\psi}$, where $\psi: W_E \rightarrow \Lbar^{\times}$ 
is a character, such that if we identify $W_E^{\mathrm{ab}}\cong E^{\times}$ then $\psi|_{\UU^{m+1}_E}=\theta$.
 \end{prop}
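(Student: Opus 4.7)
The plan is to reduce the non-vanishing of $\Hom_{\UU^{m+1}(\Aa)}(V(\alpha,\theta),\pi)$, via Frobenius reciprocity and the uniqueness of $\eta(\theta)$, to the statement that $\pi$ contains the simple character $\theta$, and then to appeal to Bushnell--Kutzko's classification of supercuspidals of $\GL_N(F)$ via maximal simple types. Concretely, since $V(\alpha,\theta)=\Indu{J^{m+1}(\alpha)}{\UU^{m+1}(\Aa)}{\eta(\theta)}$, Frobenius reciprocity gives
$$\Hom_{\UU^{m+1}(\Aa)}\bigl(V(\alpha,\theta),\pi\bigr)\cong \Hom_{J^{m+1}(\alpha)}\bigl(\eta(\theta),\pi\bigr).$$
From \eqref{handle_eta}, $\eta(\theta)$ occurs (with multiplicity $a\ge 1$) as a direct summand of $\Indu{H^{m+1}(\alpha)}{J^{m+1}(\alpha)}{\theta}$, and since $\eta(\theta)$ is the unique irreducible representation of $J^{m+1}(\alpha)$ whose restriction to $H^{m+1}(\alpha)$ contains $\theta$, the right-hand side above is non-zero precisely when $\Hom_{H^{m+1}(\alpha)}(\theta,\pi)\neq 0$.

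Next, because $[E:F]=N$, the stratum $[\Aa,n,0,\alpha]$ is maximal in the sense of \cite{BK}, and $\theta\in\mathcal C(m,\alpha)$ is a simple character of maximal type. The central theorem of \cite{BK} on maximal simple types then says that a smooth irreducible representation $\pi$ of $G$ contains $\theta$ if and only if $\pi$ is supercuspidal and isomorphic to $\cIndu{E^\times J^0(\alpha)}{G}{\Lambda}$ for an irreducible representation $\Lambda$ of $E^\times J^0(\alpha)$ with $\Lambda|_{H^{m+1}(\alpha)}$ isotypic of type $\theta$. The irreducibility of such a compact induction follows from the intertwining formula $I_G(\theta)=E^\times J^{m+1}(\alpha)$, which by a Mackey argument forces $I_G(\Lambda)=E^\times J^0(\alpha)$; combined with the previous paragraph, this establishes the first equivalence of the proposition.

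For the tame case, I would invoke the Bushnell--Henniart essentially tame local Langlands correspondence to attach to $\pi=\cIndu{E^\times J^0(\alpha)}{G}{\Lambda}$ a tame admissible pair $(E/F,\psi)$, with $\rho\cong\Indu{W_E}{W_F}{\psi}$ under $W_E^{\mathrm{ab}}\cong E^\times$. The character $\psi$ is determined by $\Lambda$ up to a rectifier character of $E^\times$ that is tame (hence trivial on $\UU_E^1$, \emph{a fortiori} on $\UU_E^{m+1}$), and by construction $\psi|_{\UU_E^{m+1}}$ agrees with $\Lambda|_{\UU_E^{m+1}}=\theta|_{\UU_E^{m+1}}$, the latter equality because $\UU_E^{m+1}\subset H^{m+1}(\alpha)$ and $\Lambda|_{H^{m+1}(\alpha)}$ is $\theta$-isotypic. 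The main obstacle is to pin down the rectifier in the essentially tame LLC precisely enough to guarantee its triviality on $\UU_E^{m+1}$, so that the stated equivalence holds on the nose rather than merely up to a tame twist.
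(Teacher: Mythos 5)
Your proposal follows essentially the same route as the paper's: both reduce non-vanishing of $\Hom_{\UU^{m+1}(\Aa)}(V(\alpha,\theta),\pi)$ to ``$\pi$ contains $\theta$'' via Frobenius reciprocity and the characterization of $\eta(\theta)$, and then invoke the Bushnell--Kutzko classification of supercuspidals by maximal simple types, with Bushnell--Henniart handling the tame Langlands correspondence. Where you cite a single ``central theorem of \cite{BK},'' the paper spells out the chain $H^{m+1}\subset H^1\subset J^1\subset J^0$, using \cite[(3.2.5), (3.3.18)]{BK} to pass from $\theta\in\mathcal{C}(m,\alpha)$ to simple characters in $\mathcal{C}(0,\alpha)$, the $\beta$-extension theorem \cite[(5.2.2)]{BK}, and then \cite[(6.2.2)]{BK} for the compactly induced description; there is no single off-the-shelf ``$\pi$ contains $\theta$'' theorem in \cite{BK} for $\theta$ defined on $H^{m+1}$ with $m\ge 0$, so these intermediate inductions do need to be assembled. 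Finally, you flag the rectifier as the ``main obstacle'' for the tame case, but you have already dissolved it in the preceding clause: the rectifier is tamely ramified, hence trivial on $\UU_E^1\supseteq\UU_E^{m+1}$, so it cannot affect the constraint $\psi|_{\UU_E^{m+1}}=\theta$.
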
 
 \begin{proof} Since $\alpha$ is minimal over $F$ and $[E:F]=N$, \cite[(3.2.5), (3.3.18)]{BK} imply that $\Indu{H^{m+1}}{H^1}{\theta}\cong \bigoplus \tilde{\theta}$, 
 where the sum is taken over all $\tilde{\theta}\in \mathcal C(0, \alpha)$, such that $\tilde{\theta}|_{H^{m+1}}= \theta$. It follows from 
 \eqref{handle_eta} that $\Indu{H^1}{J^1}{\tilde{\theta}}\cong \eta(\tilde{\theta})^{\oplus a}$. Theorem (5.2.2) of \cite{BK} implies that 
 $\Indu{J^1}{J^0}{\eta(\tilde{\theta})}\cong \bigoplus_{\sigma} \kappa\otimes \sigma$, where $\kappa$ is an irreducible representation of $J^0$, such that $\kappa|_{J^1}\cong \eta(\tilde{\theta})$, 
 the so called $\beta$-extension, and the sum is taken over all characters $\sigma$ of $\UU_E/\UU^1_E$, which we inflate to $J^0$ via the isomorphism 
 $J^0/J^1\cong \UU_E/\UU^1_E$. Thus 
 \begin{equation} 
 \Indu{H^{m+1}}{J^{0}}{\theta}\cong (\bigoplus \lambda)^{\oplus a}
 \end{equation}
  where the sum is taken over all simple types $\lambda$, \cite[(5.5.10)]{BK}, associated to a simple stratum $[\mathfrak A, n, 0,\alpha]$, such that 
  $\lambda |_{H^{m+1}}$ is a direct sum of copies of $\theta$. If $\Hom_{J^0}(\lambda, \pi)\neq 0$ then $\pi$ is supercuspidal and there is a unique 
  irreducible representation $\Lambda$ of $E^{\times} J^0$, such that $\Lambda|_{J^0}\cong \lambda$ and $\pi\cong \cIndu{E^{\times}J^0}{G}{\Lambda}$, \cite[(6.2.2)]{BK}.
The first part of the Proposition now follows from \eqref{handle_eta} and Frobenius reciprocity. The second part follows from \cite{BH}. 
 \end{proof} 
 
 \begin{lem}\label{invariants_super} $\UU^{n+1}(\Aa)$ acts trivially on $V(\alpha, \theta)$. The space of $\UU^n(\Aa)$-invariants in $V(\alpha, \theta)$ is zero. 
 \end{lem}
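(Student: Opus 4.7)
The plan is to reduce both statements to the properties of the simple character $\psi_\alpha$ recorded in the text just above the definition of $\mathcal{C}(m,\alpha)$—namely that its restriction to $\UU^{[n/2]+1}(\Aa)$ is trivial on $\UU^{n+1}(\Aa)$ and nontrivial on $\UU^n(\Aa)$. The engine of the reduction is the elementary observation that if $N \trianglelefteq G$ is contained in a subgroup $H \le G$, and $(\rho,V_\rho)$ is a representation of $H$, then $N$ acts trivially on $\Ind_H^G \rho$ if and only if $\rho(n) = \mathrm{id}$ for every $n \in N$; the forward direction is obtained by writing $(nf)(g) = \rho(gng^{-1})f(g)$ and evaluating at $g=1$, and the reverse is immediate.

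For the first assertion, I note that $\UU^{n+1}(\Aa)$ is normal in $\UU(\Aa)$ and is contained in $\UU^{[n/2]+1}(\Aa) \subseteq H^{m+1}(\alpha) \subseteq J^{m+1}(\alpha) \subseteq \UU^{m+1}(\Aa)$. Applying the observation to $V(\alpha,\theta) = \Ind_{J^{m+1}(\alpha)}^{\UU^{m+1}(\Aa)} \eta(\theta)$ reduces triviality of the $\UU^{n+1}(\Aa)$-action on $V(\alpha,\theta)$ to triviality of $\eta(\theta)|_{\UU^{n+1}(\Aa)}$. Using the identification \eqref{handle_eta} of $\eta(\theta)^{\oplus a}$ with $\Ind_{H^{m+1}(\alpha)}^{J^{m+1}(\alpha)} \theta$ and applying the observation once more reduces further to triviality of $\theta|_{\UU^{n+1}(\Aa)}$, which is immediate since $\theta|_{\UU^{[n/2]+1}(\Aa)} = \psi_\alpha$ and the latter is trivial on $\UU^{n+1}(\Aa)$ by construction.

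For the second assertion, I would invoke the irreducibility of $V(\alpha,\theta)$ together with the normality of $\UU^n(\Aa)$ in $\UU^{m+1}(\Aa)$: the subspace $V(\alpha,\theta)^{\UU^n(\Aa)}$ is then $\UU^{m+1}(\Aa)$-stable, so by irreducibility it is either $0$ or all of $V(\alpha,\theta)$. In the latter case, the two-step reduction of the previous paragraph, applied with $\UU^n(\Aa)$ in place of $\UU^{n+1}(\Aa)$, would force $\theta|_{\UU^n(\Aa)}$ to be trivial. But $\UU^n(\Aa) \subseteq \UU^{[n/2]+1}(\Aa)$ (since $n \ge 1$), so $\theta|_{\UU^n(\Aa)} = \psi_\alpha|_{\UU^n(\Aa)}$, which is nontrivial by construction of $\psi_\alpha$. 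This contradiction yields $V(\alpha,\theta)^{\UU^n(\Aa)} = 0$.

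I do not anticipate any substantive obstacle: both assertions are formal consequences of the Frobenius–reciprocity/normality bookkeeping once the stated behavior of $\psi_\alpha$ is in hand. The only point requiring mild attention is the verification of the various inclusions $\UU^n(\Aa), \UU^{n+1}(\Aa) \subseteq H^{m+1}(\alpha) \subseteq J^{m+1}(\alpha) \subseteq \UU^{m+1}(\Aa)$, all of which reduce to inequalities of the form $[n/2]+1 \le n$ and $[(n+1)/2] \ge m+1$ that hold under the standing conventions on the simple stratum $[\Aa,n,0,\alpha]$.
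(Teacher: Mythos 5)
Your proof is correct and reaches the same source of the statement as the paper: both assertions are traced back to the fact that $\theta|_{\UU^{[n/2]+1}(\Aa)}=\psi_\alpha$, which is trivial on $\UU^{n+1}(\Aa)$ and nontrivial on $\UU^n(\Aa)$. The paper's one-line proof proceeds by asserting directly that $V(\alpha,\theta)|_{\UU^{[n/2]+1}(\Aa)}$ is a direct sum of copies of $\psi_\alpha$ (via Mackey restriction along the chain $H^{m+1}\subset J^{m+1}\subset\UU^{m+1}(\Aa)$, using that $\UU^{[n/2]+1}(\Aa)$ is normal in $\UU(\Aa)$); both the triviality on $\UU^{n+1}(\Aa)$ and the vanishing of $\UU^n(\Aa)$-invariants then read off immediately from the corresponding properties of $\psi_\alpha$. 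Your version unpacks this into two applications of the normal-subgroup/induction observation, which is slightly more elementary and avoids phrasing the Mackey restriction explicitly. The one genuine difference is in the second assertion: you invoke irreducibility of $V(\alpha,\theta)$ to conclude that the $\UU^n(\Aa)$-invariants are $0$ or everything, whereas the paper's route does not need irreducibility at all — once one knows the restriction to $\UU^{[n/2]+1}(\Aa)$ is $\psi_\alpha$-isotypic (or, more precisely, a sum of $\UU^{m+1}(\Aa)$-conjugates of $\psi_\alpha$, each of which is still nontrivial on $\UU^n(\Aa)$), there simply cannot be a nonzero $\UU^n(\Aa)$-fixed vector. Your use of irreducibility is valid, since it was established in the preceding lemma, but it is a small detour that the paper sidesteps.
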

 \begin{proof} This follows from the fact that the restriction of $V(\alpha, \theta)$ to $\UU^{[n/2]+1}(\Aa)$ is isomorphic to a direct sum of copies of $\psi_{\alpha}$. 
 \end{proof}
 
 \begin{lem}\label{dim_V}
$$ \dim V(\alpha, \theta)=\frac{( \UU^{m+1}(\Aa): \UU^{[n/2]+1}(\Aa))}{ (\UU^{m+1}_E : \UU^{[n/2]+1}_E)\dim \eta(\theta)}\sim (\UU^{m+1}(\Aa): \UU^{n+1}(\Aa))^{1/2} q_E^{-n/2}.$$
 \end{lem}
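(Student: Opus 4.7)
My plan is to extract the formula directly from $V(\alpha,\theta)=\Indu{J^{m+1}(\alpha)}{\UU^{m+1}(\Aa)}{\eta(\theta)}$, which gives $\dim V(\alpha,\theta)=[\UU^{m+1}(\Aa):J^{m+1}(\alpha)]\cdot\dim\eta(\theta)$, and then to decompose this index through the chain $\UU^{[n/2]+1}(\Aa)\subseteq H^{m+1}(\alpha)\subseteq J^{m+1}(\alpha)\subseteq \UU^{m+1}(\Aa)$. The middle index $[J^{m+1}(\alpha):H^{m+1}(\alpha)]=(\dim\eta(\theta))^2$ is precisely \eqref{dim_eta}. For the outer index $[H^{m+1}(\alpha):\UU^{[n/2]+1}(\Aa)]$, I use that $H^{m+1}(\alpha)=\UU_E^{m+1}\UU^{[n/2]+1}(\Aa)$ together with the normality of $\UU^{[n/2]+1}(\Aa)$ in $\UU(\Aa)$ to obtain, by the second isomorphism theorem,
$$[H^{m+1}(\alpha):\UU^{[n/2]+1}(\Aa)]=[\UU_E^{m+1}:\UU_E^{m+1}\cap \UU^{[n/2]+1}(\Aa)]=[\UU_E^{m+1}:\UU_E^{[n/2]+1}],$$
where the second equality uses the elementary identity $E\cap \Pp^j=\pp_E^j$ (immediate from the definition of $\Pp^j$ and of the lattice chain), valid whenever $m+1\le [n/2]+1$. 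Assembling the three indices yields the exact formula for $\dim V(\alpha,\theta)$.

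For the asymptotic, I evaluate both sides as explicit powers of $q_F$. For $j\ge 1$ the map $1+x\mapsto x$ is a group isomorphism $\UU^j(\Aa)/\UU^{j+1}(\Aa)\iso \Pp^j/\Pp^{j+1}$, and by the structure theory of hereditary orders of period $e$ in $M_N(F)$ one has $|\Pp^j/\Pp^{j+1}|=q_F^{N^2/e}=q_F^{ef^2}$; similarly $[\UU_E^j:\UU_E^{j+1}]=q_E=q_F^f$ for $j\ge 1$. Telescoping and using \eqref{dim_eta} to identify $\dim\eta(\theta)$ as $1$ or $q_F^{ef^2/2}$ according as $n$ is odd or even, I obtain the $q_F$-exponent of the left-hand side and compare it with the exponent $(n-m)ef^2/2-nf/2$ coming from $(\UU^{m+1}(\Aa):\UU^{n+1}(\Aa))^{1/2}q_E^{-n/2}$. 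A direct calculation shows the two exponents differ by a quantity that depends on $m$, $e$, $f$ but not on $n$; since in the intended application $m$ and the extension $E/F$ are fixed while $n\to\infty$, this bounded discrepancy is exactly the $O(1)$ permitted by the definition of $\sim$.

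The only bookkeeping care required is tracking the floor functions $[n/2]$ and $[(n+1)/2]$ through the split into $n$ odd (where $J^{m+1}(\alpha)=H^{m+1}(\alpha)$ and $\dim\eta(\theta)=1$) and $n$ even (where $\dim\eta(\theta)=q_F^{ef^2/2}$ contributes a parity-dependent multiplicative factor). I do not anticipate a substantive obstacle; the main routine to verify carefully is the computation of $|\Pp^j/\Pp^{j+1}|$, which is standard once the hereditary order $\Aa$ is identified as the principal order of period $e$ attached to the lattice chain $\{\pp_E^i\}$.
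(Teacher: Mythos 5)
Your proposal is correct, and it takes essentially the same route as the paper: start from $\dim V(\alpha,\theta)=(\UU^{m+1}(\Aa):J^{m+1}(\alpha))\dim\eta(\theta)$ (the dimension formula for an induced representation) and then use \eqref{dim_eta} to rewrite the index. The paper's own proof is two sentences long and leaves all the bookkeeping to the reader; your write-up supplies that bookkeeping — the second isomorphism theorem step (using $E\cap\Pp^j=\pp_E^j$), the computation $|\Pp^j/\Pp^{j+1}|=q_F^{ef^2}$ for the principal order of period $e$ attached to $\{\pp_E^i\}$, and the explicit check that the discrepancy in $q_F$-exponents between the two sides is $O(1)$ in $n$ — and every step checks out.
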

 \begin{proof} It follows from the definition of $V(\alpha, \theta)$ as induced representation that its dimension is equal to $(\UU^{m+1}(\Aa): J^{m+1}(\alpha)) \dim \eta(\theta)$. The Lemma follows from this using \eqref{dim_eta}.
 \end{proof} 
 
We fix an additive character $\psi_E: E\rightarrow \Lbar^{\times}$, which is trivial on $\pp_E$ and nontrivial on $\OO_E$. There exists a unique map $s: A\rightarrow E$ such that 
 \begin{equation}\label{tame_cor}
 \psi_A(ab)= \psi_E(s(a) b), \quad \forall a\in A, \quad \forall b\in E.
 \end{equation} 
 Moreover, $s$ is an $E\times E$-bimodule homomorphism, such that $s(\Pp^m)= \pp_E^m$ for all $m\in \ZZ$,  \cite[(1.3.4)]{BK}. Since 
 $E$ is maximal in $A$ and $E$ is separable over $F$, $\tr_{A/F}(E)=\tr_{E/F}(E)=F$. It follows from \eqref{tame_cor} that $s(1)\neq 0$. 
  We let $\delta:= v_E(s(1))$.

 \begin{lem}\label{main_lemma} Let $n$, $r$ be natural numbers with $n-\delta> r$.
 Let $\alpha_1,\alpha_2\in E$ be minimal over $F$ with $v_E(\alpha_1)=v_E(\alpha_2)=-n.$ If $\alpha_1 - \alpha_2 \in \pp_E^{-n +\delta+1}$ 
 and $\alpha_1+ \Pp^{-r} = x \alpha_2 x^{-1} + \Pp^{-r}$ for some $x\in \UU(\Aa)$ then $\alpha_1-\alpha_2 \in \pp_E^{-r-\delta}$. 
 \end{lem}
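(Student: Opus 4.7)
The plan is to apply the tame corestriction $s\colon A\to E$ to convert the conjugation relation into an equation in $E$, and then exploit the kernel description of the adjoint map $\phi(a) := \alpha_2 a - a\alpha_2$, which is especially clean because $\alpha_2$ is minimal.

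Set $\beta := \alpha_1 - \alpha_2 \in \pp_E^{-n+\delta+1}$. Multiplying the hypothesis $x\alpha_2 x^{-1} - \alpha_1 \in \Pp^{-r}$ on the right by $x \in \UU(\Aa)$ yields $x\alpha_2 - \alpha_1 x \in \Pp^{-r}$, which rearranges as
\[
\phi(x) \equiv -\beta x \pmod{\Pp^{-r}}.
\]
Applying $s$ and using that $s$ is an $E$-$E$-bimodule map while $E$ is commutative, the left-hand side satisfies $s(\phi(x)) = \alpha_2 s(x) - s(x)\alpha_2 = 0$; the right-hand side gives $-\beta s(x)$. Since $s(\Pp^{-r}) = \pp_E^{-r}$, we conclude
\[
\beta s(x) \in \pp_E^{-r}, \qquad \text{hence} \qquad v_E(\beta) + v_E(s(x)) \ge -r.
\]

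It then suffices to prove that $v_E(s(x)) \le \delta$. The case $v_E(\beta) \ge -r$ already gives $v_E(\beta) \ge -r - \delta$, so assume $k_0 := v_E(\beta) < -r$. Then $\beta x \in \Pp^{k_0}$, and since $k_0 < -r$ the congruence above forces $\phi(x) \in \Pp^{k_0}$. Here we invoke the minimality of $\alpha_2$: by \cite[\S1.4]{BK}, for each integer $k$ the map $\phi$ induces on the graded piece $\Pp^k/\Pp^{k+1} \to \Pp^{k-n}/\Pp^{k-n+1}$ a map with kernel precisely $\pp_E^k/\pp_E^{k+1}$. Iterating this kernel description from level $0$ upward, which is legitimate because $k_0 > -n$, produces a decomposition
\[
x = e + y, \qquad e \in \OO_E, \qquad y \in \Pp^{k_0+n} \subseteq \Pp^{\delta+1},
\]
the last containment using $k_0 + n \ge \delta + 1$.

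Reducing modulo $\Pp$ gives $x \equiv e \pmod{\Pp}$; since $x \in \UU(\Aa) = \Aa^{\times}$ this forces $e \in \OO_E^{\times}$. By $E$-linearity of $s$ we have $s(x) = s(1)\,e + s(y)$ with $v_E(s(1)\,e) = \delta$ and $v_E(s(y)) \ge k_0 + n \ge \delta + 1$, so $v_E(s(x)) = \delta$ exactly. Combining with the displayed inequality yields $v_E(\beta) \ge -r - \delta$, i.e.\ $\beta \in \pp_E^{-r-\delta}$, as required. The one nontrivial ingredient is the kernel description of $\phi$ afforded by the minimality of $\alpha_2$; every other step is formal manipulation with the tame corestriction.
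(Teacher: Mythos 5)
Your proof is correct, but it takes a genuinely different route from the paper's. The paper invokes the Kutzko--Manderscheid intertwining theorem \cite[Thm.\,2.4]{KM} as a black box: from $\alpha_2 \equiv x\alpha_2 x^{-1} \pmod{\Pp^{-n+\delta+1}}$ it extracts a multiplicative decomposition $x = uy$ with $u \in E^{\times}$ and $y \in \UU^{\delta+1}(\Aa)$, notes that $u$ commutes with $\alpha_2$, and then applies $s$ to $\alpha_1 y - y\alpha_2 \in \Pp^{-r}$ to obtain $(\alpha_1-\alpha_2)s(y) \in \pp_E^{-r}$ with $v_E(s(y)) = \delta$. You instead avoid the KM theorem entirely by unpacking the graded exact sequence for the adjoint map $a_{\alpha_2}$ (the ingredient that underlies KM, from \cite[\S 1.4]{BK}, where minimality of $\alpha_2$ gives $\ker(\Pp^j/\Pp^{j+1} \to \Pp^{j-n}/\Pp^{j-n+1}) = \pp_E^j/\pp_E^{j+1}$) and iterating it to produce the additive decomposition $x = e + y$ with $e \in \OO_E^{\times}$, $y \in \Pp^{\delta+1}$ — the additive counterpart of $x = uy$, since $e+y = e(1 + e^{-1}y)$. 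You then conclude by applying $s$ to $\phi(x) + \beta x \in \Pp^{-r}$ rather than to $\alpha_1 y - y\alpha_2 \in \Pp^{-r}$, but the endgame is identical in spirit: $E$-bilinearity of $s$ kills $s(\phi(x))$, and $v_E(s(x)) = \delta$ gives the valuation bound. Your version is more self-contained (it does not quote KM and reconstructs exactly the approximation needed, no more), at the cost of carrying out the graded induction by hand; the paper's is shorter given that it is happy to cite \cite{KM}. One small remark: your case distinction on whether $v_E(\beta) \geq -r$ is unnecessary in the paper's proof because the decomposition of $x$ there is obtained from the a priori bound $\alpha_1 - \alpha_2 \in \pp_E^{-n+\delta+1}$ rather than from the better bound $\phi(x) \in \Pp^{k_0}$; but your finer bookkeeping causes no harm.
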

 \begin{proof} Since $\pp^{-n+\delta+1}_E\subset \Pp^{-n+\delta+1}$ we have 
 $$\alpha_2+ \Pp^{-n+\delta+1} = \alpha_1 + \Pp^{-n+\delta+1}=x \alpha_2 x^{-1}+ \Pp^{-n+\delta+1}.$$
 Since $\alpha_2$ is minimal it follows from \cite[Thm.\,2.4]{KM} that $x= u y$ with $u\in E^{\times}$ and $y\in \UU^{\delta+1}(\Aa)$. Then 
 $x \alpha_2 x^{-1}= y\alpha_2 y^{-1}$ and so 
 $ \alpha_1+ \Pp^{-r}= y \alpha_2 y^{-1} +\Pp^{-r}$.  Thus  $\alpha_1 y - y \alpha_2 \in \Pp^{-r}$. By applying $s$, we get 
 $s(y)( \alpha_1 - \alpha_2)\in s(\Pp^{-r})= \pp_E^{-r}$. Since $y\in \UU^{\delta+1}(\Aa)$, $s(y)\in s(1)+s(\Pp^{\delta+1})= s(1)+ \pp_E^{\delta+1}$, hence 
  $v_E(s(y))= \delta$ and $\alpha_1 - \alpha_2\in \pp_E^{-r-\delta}$. 
  \end{proof}
 
 \begin{lem}\label{notinter} For $i=1, 2$ let $\alpha_i\in E$ be minimal over $F$ with $v_E(\alpha_i)=-n$ and $n> 2\delta$, and let $\theta_i\in \mathcal C(m, \alpha_i)$. 
 If $\alpha_1- \alpha_2\in \pp_E^{-n+\delta +1}$ and $\alpha_1-\alpha_2 \not\in \pp_E^{-[n/2]-\delta}$ then $\theta_1$ and $\theta_2$ do not intertwine in $G$. 
 \end{lem}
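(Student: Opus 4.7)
The plan is to argue by contradiction: assume that $\theta_1$ and $\theta_2$ intertwine in $G$ via some element $x\in G$, and derive that $\alpha_1-\alpha_2$ must lie in $\pp_E^{-[n/2]-\delta}$, contradicting the hypothesis. The key mechanism is Lemma~\ref{main_lemma}, which already converts a congruence $\alpha_1+\Pp^{-r}=x\alpha_2x^{-1}+\Pp^{-r}$ (for $x\in \UU(\Aa)$) into an integrality statement on $\alpha_1-\alpha_2$, so the task is to produce such a congruence from the intertwining hypothesis with $r$ of size roughly $[n/2]$.

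First I would restrict the hypothetical intertwining of $\theta_1$ and $\theta_2$ to the normal subgroup $\UU^{[n/2]+1}(\Aa)\subset H^{m+1}(\alpha_i)$, where by definition $\theta_i$ agrees with $\psi_{\alpha_i}$. This restricts $\theta_i$ to a character of the shape $\psi_{\alpha_i}$, and on a suitable subgroup of $\UU^{[n/2]+1}(\Aa)\cap x\UU^{[n/2]+1}(\Aa)x^{-1}$ the intertwining condition forces the characters $\psi_{\alpha_1}$ and $\psi_{x\alpha_2x^{-1}}$ to coincide. Since $\psi_F\circ\tr_A$ is a non-degenerate pairing on $A$ that matches the filtration $\Pp^\bullet$ to its dual in the usual way, this identity of characters translates into a congruence $\alpha_1+\Pp^{-r}=x\alpha_2x^{-1}+\Pp^{-r}$ for some $r$ that is essentially $[n/2]$ (after accounting for the level shift coming from the centralizer of~$\alpha_2$ inside $A$).

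The step I expect to be the main obstacle is reducing from an arbitrary intertwining element $x\in G$ to one lying in $\UU(\Aa)$, which is what Lemma~\ref{main_lemma} requires as input. The standard way to handle this is to use the Cartan-type decomposition $G=\UU(\Aa)E^\times\UU(\Aa)$ attached to the chain $\{\pp_E^i\}$ (which holds because $E^\times$ normalizes $\UU(\Aa)$ and acts transitively on the lattice chain up to scaling), and then exploit that $E^\times$ normalizes $H^{m+1}(\alpha_i)$ and acts trivially on $\psi_{\alpha_i}$ (because $\alpha_i\in E$ commutes with $E^\times$, so conjugation by $e\in E^\times$ fixes $\alpha_i$ and hence fixes $\psi_{\alpha_i}$). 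This absorbs the $E^\times$-factor and leaves an intertwining by an element of $\UU(\Aa)$.

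Having produced $x\in\UU(\Aa)$ satisfying $\alpha_1+\Pp^{-r}=x\alpha_2x^{-1}+\Pp^{-r}$ with $r\le[n/2]$, the hypotheses $n>2\delta$ and $\alpha_1-\alpha_2\in\pp_E^{-n+\delta+1}$ put us precisely in the range where Lemma~\ref{main_lemma} applies. It yields $\alpha_1-\alpha_2\in\pp_E^{-r-\delta}\subseteq\pp_E^{-[n/2]-\delta}$, contradicting the assumption that $\alpha_1-\alpha_2\notin\pp_E^{-[n/2]-\delta}$. Hence no such intertwining element $x$ exists.
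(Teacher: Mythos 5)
Your overall plan (argue by contradiction, restrict the intertwining to $\UU^{[n/2]+1}(\Aa)$ so the simple characters become $\psi_{\alpha_i}$, extract a congruence $\alpha_1+\Pp^{-r}=x\alpha_2x^{-1}+\Pp^{-r}$, and feed it into Lemma~\ref{main_lemma}) matches the paper's proof. The crucial step, however, is getting the intertwining element into $\UU(\Aa)$, and this is where your argument has a real gap.

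You propose to use a decomposition $G=\UU(\Aa)E^\times\UU(\Aa)$. This decomposition is false. The order $\Aa$ attached to the chain $\{\pp_E^i\}$ is a parahoric-type order of period $e(E/F)$, and the double coset space $\UU(\Aa)\backslash G/\UU(\Aa)$ is indexed by a set that is strictly larger than $E^\times/\UU_E\cong\ZZ$. For example, already for $N=2$, $F=\Qp$, $E=\Qp(\sqrt p)$, the group $\UU(\Aa)$ is an Iwahori subgroup, and $\UU(\Aa)\backslash G/\UU(\Aa)$ is indexed by the extended affine Weyl group of $\GL_2$, whereas $E^\times/\UU_E$ contributes only the cyclic subgroup generated by $\bigl(\begin{smallmatrix}0&1\\p&0\end{smallmatrix}\bigr)$. (The unramified case fails too: there $\UU(\Aa)=\GL_N(\OO_F)$, and $E^\times$ modulo units contributes only powers of the scalar $\varpi_F$, not the whole antidominant cone of the Cartan decomposition.) Moreover, even if one had such a decomposition, an arbitrary $g\in G$ only \emph{intertwines} $\theta_1$ and $\theta_2$ on $H^{m+1}\cap gH^{m+1}g^{-1}$; this gives a congruence on $\alpha_1-x\alpha_2x^{-1}$ only modulo the dual of $\Pp^{[n/2]+1}\cap x\Pp^{[n/2]+1}x^{-1}$, which for $x$ far from $\UU(\Aa)$ is a much coarser lattice than $\Pp^{-[n/2]}$, so the congruence you need would not come out.

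The paper sidesteps all of this by citing the Bushnell--Kutzko conjugacy theorem for simple characters, \cite[(3.5.11)]{BK}: if $\theta_1$ and $\theta_2$ intertwine in $G$, then they are already conjugate by some $x\in\UU(\Aa)$ normalizing $H^{m+1}$. This is a genuinely nontrivial result, and it is precisely what lets one restrict to $\UU^{[n/2]+1}(\Aa)$, identify $\psi_{\alpha_1}=\psi_{x\alpha_2x^{-1}}$ there, deduce $\alpha_1-x\alpha_2x^{-1}\in\Pp^{-[n/2]}$ via the duality of the $\Pp$-filtration under $\psi_A$, and then apply Lemma~\ref{main_lemma}. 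Without this input your reduction to $x\in\UU(\Aa)$ does not go through.
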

 \begin{proof} If $g\in G$ intertwines $\theta_1$ and $\theta_2$ then there exists $x\in \UU(\Aa)$ normalizing $H^{m+1}$ such that $\theta_1^x= \theta_2$, \cite[(3.5.11)]{BK}.
 By considering the restrictions of $\theta_1$ and $\theta_2$ to $\UU^{[n/2]+1}(\Aa)$ we obtain that $\psi_{\alpha_1}= \psi^x_{\alpha_2}= \psi_{x \alpha_2 x^{-1}}$ as 
 characters of $\UU^{[n/2]+1}(\Aa)$. Hence, $\psi_A((\alpha_1 - x \alpha_2 x^{-1}) a)=1$ for all $a\in \Pp^{[n/2]+1}$, which implies $\alpha_1- x \alpha_2 x^{-1}\in \Pp^{-[n/2]}$, 
 \cite[p.22]{BK}. Lemma \ref{main_lemma} implies that $\alpha_1-\alpha_2 \in \pp_E^{-[n/2]-\delta}$, leading to a contradiction.
 \end{proof}
 
 \begin{lem}\label{single_alpha} Let $\alpha\in E$ be minimal with $v_E(\alpha)=-n$, where  $n\ge  2m\ge 0$. Then $|\mathcal C(m, \alpha)| = (\UU^{m+1}_E : \UU^{[n/2]+1}_E)\sim q_E^{n/2}$. 
 Moreover, distinct $\theta_1, \theta_2\in \mathcal C(m, \alpha)$ do not intertwine.
 \end{lem}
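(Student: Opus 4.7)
The plan has two parts, handled separately.

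For the cardinality, I will analyze the structure of $H^{m+1}(\alpha) = \UU^{m+1}_E \UU^{[n/2]+1}(\mathfrak A)$. The hypothesis $n \ge 2m$ gives $[n/2]+1 \ge m+1$, and since $E \cap \Pp^r = \pp_E^r$ for all $r$ (by definition of the lattice chain order), one obtains $\UU^{m+1}_E \cap \UU^{[n/2]+1}(\mathfrak A) = \UU^{[n/2]+1}_E$. The second isomorphism theorem then yields a canonical isomorphism
\[
H^{m+1}(\alpha)/\UU^{[n/2]+1}(\mathfrak A) \;\cong\; \UU^{m+1}_E/\UU^{[n/2]+1}_E.
\]
Since $\mathcal C(m,\alpha)$ is non-empty by standard Bushnell--Kutzko theory (e.g. \cite[(3.2.1), (3.2.3)]{BK}), its elements form a torsor over the Pontryagin dual of the abelian group above, giving $|\mathcal C(m,\alpha)| = (\UU^{m+1}_E : \UU^{[n/2]+1}_E)$. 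The asymptotic $\sim q_E^{n/2}$ then follows at once from the standard index formula $(\UU^i_E : \UU^j_E) = q_E^{j-i}$ for $1 \le i \le j$, keeping $m$ fixed and letting $n$ grow.

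For the non-intertwining claim, I will exploit that the $G$-intertwining is entirely controlled by the common stratum part $\psi_\alpha$. Suppose $g \in G$ intertwines $\theta_1$ and $\theta_2$, meaning that $\theta_2^g$ and $\theta_1$ agree on $H^{m+1}(\alpha) \cap g H^{m+1}(\alpha) g^{-1}$. Since both $\theta_i$ restrict to $\psi_\alpha$ on $\UU^{[n/2]+1}(\mathfrak A)$, $g$ must intertwine $\psi_\alpha$ with itself on this smaller intersection; by the intertwining theorem for simple characters (\cite[(3.3.2)]{BK}, applied in the present case $[E:F]=N$, so that the centralizer $B = E$), this forces
\[
g \in J^{m+1}(\alpha) \, E^\times \, J^{m+1}(\alpha) = E^\times J^{m+1}(\alpha),
\]
the equality because $E^\times$ normalizes $J^{m+1}(\alpha) = \UU^{m+1}_E \UU^{[(n+1)/2]}(\mathfrak A)$.

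Now this group $E^\times J^{m+1}(\alpha)$ normalizes $H^{m+1}(\alpha)$ (this is part of the BK theory of simple strata; $E^\times$ preserves the lattice chain $\{\pp_E^i\}$ and hence each $\UU^i(\mathfrak A)$, and $J^{m+1}$ normalizes $H^{m+1}$ by \cite[(3.1.15)]{BK}). Therefore the intersection $H^{m+1}(\alpha) \cap g H^{m+1}(\alpha) g^{-1}$ equals all of $H^{m+1}(\alpha)$, and the intertwining relation reads $\theta_2^g = \theta_1$ on $H^{m+1}(\alpha)$. But since the intertwining group $E^\times J^{m+1}(\alpha) = I_G(\theta_2)$ also normalizes $H^{m+1}(\alpha)$, the definition of intertwining collapses to $\theta_2^g = \theta_2$ on $H^{m+1}(\alpha)$. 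Combining the two equalities gives $\theta_1 = \theta_2$, contradicting our assumption. The main obstacle is not any individual step but the careful bookkeeping needed to apply the Bushnell--Kutzko results correctly; once one verifies that our hypotheses ($\alpha$ minimal over $F$, $[E:F]=N$) place us in their framework with centralizer $B = E$, everything else follows from their structural theorems.
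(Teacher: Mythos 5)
Your proof is correct and follows essentially the same route as the paper. For the cardinality you observe that $\mathcal C(m,\alpha)$ is a torsor over the Pontryagin dual of $H^{m+1}(\alpha)/\UU^{[n/2]+1}(\Aa)\cong\UU^{m+1}_E/\UU^{[n/2]+1}_E$, which is exactly the paper's observation that all other simple characters are of the form $\theta\mu$ with $\mu$ a character of this quotient; and for the non-intertwining you use the same two applications of the Bushnell--Kutzko intertwining theorem \cite[(3.3.2)]{BK} (first to $\psi_\alpha$ to place $g\in E^{\times}J^{m+1}(\alpha)$, then to $\theta_2$ to show $g$ self-intertwines $\theta_2$), together with the fact that this set normalizes $H^{m+1}(\alpha)$, precisely as in the paper's argument.
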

 \begin{proof} The set $\mathcal C(m, \alpha)$ is non-empty, \cite[(3.3.18)]{BK}. If $\theta\in \mathcal C(m, \alpha)$ then all the other characters 
 in $\mathcal C(m, \alpha)$ are of the form $\theta \mu$, where $\mu$ is any character of $\UU^{m+1}_E/ \UU^{[n/2]+1}_E$ inflated to $H^{m+1}$.
 This implies the first assertion. The second assertion follows from the proof of \cite[Prop.\,7.3]{thesis}.  
 Namely, if $g$ intertwines $\theta$ and $\theta \mu$ then it will intertwine  their restrictions to $\UU^{[n/2]+1}(\Aa)$, which are both equal to 
 $\psi_{\alpha}$. It follows from \cite[(3.3.2)]{BK} that $g\in E^{\times} \UU^{[(n+1)/2]}(\Aa)$. Any such  $g$ will normalize $H^{m+1}$, and so
 the intertwining maybe rewritten as 
\begin{equation}\label{intertwine} 
 \theta\mu(h)= \theta(g h g^{-1}), \quad \forall h \in H^{m+1}. 
 \end{equation}
 By applying  \cite[(3.3.2)]{BK}  again, we get that $g$ will also intertwine $\theta$ with itself. It then follows from \eqref{intertwine} that $\mu(h)=1$ 
 for all $h\in H^{m+1}$.
 \end{proof}
 
We fix $m\ge 0$.  For $n\ge 2m$, let $I(n)$ be the set of isomorphism classes of the representations of the form $V(\alpha, \theta)$, where 
$\theta\in \mathcal C(m, \alpha)$ with $\alpha\in E$ minimal over $F$ and $v_E(\alpha)=-n$. 
 
 \begin{prop}\label{size_I(n)} $|I(n)|\sim q_E^{n}$.
 \end{prop}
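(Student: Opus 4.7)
The plan is to prove $|I(n)|\sim q_E^n$ via matching asymptotic upper and lower bounds. For the upper bound, Lemma~\ref{invariants_super} shows that every $V(\alpha,\theta)\in I(n)$ factors through the finite $p$-group $\UU^{m+1}(\Aa)/\UU^{n+1}(\Aa)$, while Lemma~\ref{dim_V} gives the common estimate $(\dim V)^2\sim(\UU^{m+1}(\Aa):\UU^{n+1}(\Aa))\cdot q_E^{-n}$ for every such $V$. Applying the Peter--Weyl inequality
$$\sum_{V\in I(n)}(\dim V)^2\le(\UU^{m+1}(\Aa):\UU^{n+1}(\Aa))$$
and dividing yields $|I(n)|\lesssim q_E^n$.

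For the lower bound I will fix a reference minimal element $\alpha_0\in E$ with $v_E(\alpha_0)=-n$, whose existence for $n$ large (taking $n$ coprime to $e(q_E-1)$) is guaranteed by Lemma~\ref{many_min}, and select a set $S\subset\pp_E^{-n+\delta+1}$ of coset representatives modulo $\pp_E^{-[n/2]-\delta}$, so that $|S|=q_E^{n-[n/2]-2\delta-1}\sim q_E^{n/2}$. For each $\beta\in S$ the translate $\alpha_\beta:=\alpha_0+\beta$ agrees with $\alpha_0$ modulo $\pp_E^{-n+1}$, hence is again minimal with $v_E(\alpha_\beta)=-n$. For distinct $\beta_1,\beta_2\in S$ the difference $\alpha_{\beta_1}-\alpha_{\beta_2}=\beta_1-\beta_2$ lies in $\pp_E^{-n+\delta+1}$ but outside $\pp_E^{-[n/2]-\delta}$, so Lemma~\ref{notinter} (valid for $n>2\delta$) forbids $G$-intertwining of any character in $\mathcal{C}(m,\alpha_{\beta_1})$ with any character in $\mathcal{C}(m,\alpha_{\beta_2})$. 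Within a single $\alpha_\beta$, Lemma~\ref{single_alpha} yields $|\mathcal{C}(m,\alpha_\beta)|\sim q_E^{n/2}$ pairwise non-intertwining characters. Collecting gives $\sim q_E^{n/2}\cdot q_E^{n/2}=q_E^n$ pairs $(\alpha_\beta,\theta)$ whose simple characters are pairwise non-intertwining in $G$.

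To convert this into a lower bound for $|I(n)|$ I will argue that these pairs produce pairwise non-isomorphic $V(\alpha,\theta)$. An isomorphism $V(\alpha_1,\theta_1)\cong V(\alpha_2,\theta_2)$ as $\UU^{m+1}(\Aa)$-representations would, by Mackey and Frobenius reciprocity, supply some $g\in\UU^{m+1}(\Aa)$ intertwining $\eta(\theta_1)$ with $\eta(\theta_2)$; since $\eta(\theta_i)$ is characterised within $J^{m+1}$ by containing $\theta_i$ upon restriction to $H^{m+1}$ (cf.~\eqref{handle_eta}), such a $g$ would descend to a $G$-intertwining of $\theta_1$ and $\theta_2$, contradicting Lemma~\ref{notinter} or Lemma~\ref{single_alpha}. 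This passage from non-intertwining of the simple characters to non-isomorphism of the compactly induced $V$'s, together with verifying that the translates $\alpha_\beta$ genuinely stay in the hypothesis region of Lemma~\ref{notinter}, is the step I expect to be most delicate; the remainder (Peter--Weyl, coset counting, and the asymptotic from Lemma~\ref{dim_V}) is routine, and combining the two bounds yields the claim.
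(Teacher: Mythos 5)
Your argument is correct. The lower bound is essentially the paper's own argument: fix a minimal $\alpha_0$ of valuation $-n$, translate by coset representatives of $\pp_E^{-n+\delta+1}/\pp_E^{-[n/2]-\delta}$, and invoke Lemmas~\ref{notinter} and \ref{single_alpha} to get $\sim q_E^{n/2}\cdot q_E^{n/2}$ pairwise non-intertwining simple characters; the final step (non-intertwining $\Rightarrow$ non-isomorphic $V$'s) is phrased slightly differently --- you run Mackey/Frobenius on $\eta(\theta_i)$ directly, the paper instead goes through $\Indu{H^{m+1}}{\UU^{m+1}(\Aa)}{\theta_i}\cong V(\alpha_i,\theta_i)^{\oplus a}$ from~\eqref{handle_eta} --- but these are the same argument in substance.

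Your upper bound, however, is a genuinely different and cleaner route. The paper bounds $|I(n)|$ by directly estimating the number of distinct simple characters $\theta$: it observes that $\mathcal{C}(m,\alpha)$ depends only on $\alpha \bmod \pp_E^{-[n/2]}$, so $|\bigcup_\alpha \mathcal{C}(m,\alpha)| \le |\mathcal{C}(m,\alpha)|\cdot|\pp_E^{-n}/\pp_E^{-[n/2]}|\sim q_E^n$. You instead note that $I(n)$ is a set of pairwise non-isomorphic irreducible representations of the finite group $\UU^{m+1}(\Aa)/\UU^{n+1}(\Aa)$ (Lemma~\ref{invariants_super}), all of the same dimension (the formula in Lemma~\ref{dim_V} depends only on $n$), so Burnside's $\sum (\dim V)^2 \le |\UU^{m+1}(\Aa)/\UU^{n+1}(\Aa)|$ gives the bound immediately. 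This bypasses any explicit count of the $\alpha$'s modulo the $\pp_E^{-[n/2]}$-equivalence and uses only the dimension estimate already computed. Both approaches are sound; yours is perhaps slightly slicker, while the paper's is more self-contained in that it tracks the actual characters. One very minor slip: Lemma~\ref{many_min} produces minimal elements of valuation $-mn_0$ for a fixed reference valuation $-n_0$ and $m$ coprime to $e(q_E-1)$, not for all $n$ coprime to $e(q_E-1)$ --- but this only affects which $n$ have $I(n)\neq\emptyset$, not the estimate, and the paper handles it the same way (arbitrarily large $n$ suffice).
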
 
 \begin{proof} Let us note that it follows from Lemma \ref{many_min}  that there exists $\alpha\in E$ minimal over $F$ with $n:=-v_E(\alpha)$ arbitrarily large. 
It is enough to show that $g(n)\le | I(n)| \le f(n)$, with $f(n)\sim g(n) \sim q_E^n$. For the upper bound we observe that 
$$ | I(n)| \le |\bigcup_{\alpha} \mathcal C(m, \alpha)| \le |\mathcal C(m, \alpha)| | \pp_E^{-n}/ \pp_E^{-[n/2]}| \sim q_E^n,$$
where the union is taken over all minimal $\alpha\in E$ with $v_E(\alpha)=-n$. In the above estimates we use Lemma \ref{single_alpha} and 
the fact that if $\alpha_1-\alpha_2\in \pp_E^{-[n/2]}$ then $\psi_{\alpha_1}=\psi_{\alpha_2}$ on $\UU^{[n/2]+1}(\Aa)$ and hence $\mathcal C(m, \alpha_1)=\mathcal C(m, \alpha_2)$.
For the lower bound we choose a set of representatives $\{a_i\}_i$ of $\pp_E^{-n+\delta+1}/\pp_E^{-[n/2]-\delta}$ and let $\alpha_i:= \alpha+a_i$. Then each $\alpha_i$ is minimal over $F$, 
$\alpha_i - \alpha_j \in \pp_E^{-n +\delta+1}$ and $\alpha_i -\alpha_j= a_i-a_j\not \in \pp_E^{-[n/2]-\delta}$ if $i\neq j$. Lemmas \ref{single_alpha} and \ref{notinter} imply that 
distinct $\theta_1, \theta_2\in \bigcup_i \mathcal C(m, \alpha_i)$ do not intertwine. This implies that $\Hom_{\UU^{m+1}(\Aa)}(\Indu{H^{m+1}}{\UU^{m+1}(\Aa)}{\theta_1}, 
\Indu{H^{m+1}}{\UU^{m+1}(\Aa)}{\theta_2})=0$, and \eqref{handle_eta} implies that 
$V(\alpha, \theta_1)\not\cong V(\alpha', \theta_2)$. Hence 
$$|I(n)|\ge |\bigcup_i \mathcal C(m, \alpha_i)|= |\mathcal C(m, \alpha)| | \pp_E^{-n+\delta+1}/\pp_E^{-[n/2]-\delta}|\sim q_E^n,$$
where we use Lemma \ref{single_alpha} again.
 \end{proof}

 We let  $I=\cup_{n\ge 2m} I(n)$ and for each $i\in I$ we fix a representative $V_i$ of the isomorphism class $i$. 
 
 \begin{prop}\label{capture_super} Let $m$ be large enough, so that $\UU^{m+1}(\Aa)$ is torsion free. Then the set $\{V_i\}_{i\in I}$ captures every projective object in $\Mod^{\pro}_{\UU^{m+1}(\Aa)}(\OO)$. 
 \end{prop}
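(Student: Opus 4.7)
The plan is to apply the numerical criterion of Proposition~\ref{criterion} with $K := \UU^{m+1}(\Aa)$ and the filtration $K_n := \UU^{n+1}(\Aa)$ for $n \geq m$. First I would verify the hypotheses: the choice of $m$ makes $K$ a torsion-free $p$-adic analytic group; the $K_n$ form a basis of open normal subgroups of $K$; and by Proposition~\ref{uniform} together with Lemma~\ref{p_power}, one can choose $n_0 \geq m$ such that $K_{n_0}$ is uniformly powerful and $K_{n_0}^{p^j} = K_{n_0 + j\,e(\mathcal L|\Qp)}$ for every $j \geq 1$. By construction of $I = \bigcup_{n \geq 2m} I(n)$, the $V_i$ are pairwise non-isomorphic smooth irreducible $\Lbar$-representations of $K$; indeed Lemma~\ref{invariants_super} shows that representations coming from different levels $I(n')$ cannot be isomorphic, as they have different kernels.

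Next I would compute the quantity $d(n) := \sum_{i \in I} (\dim V_i^{K_n})^2$ that appears in Proposition~\ref{criterion}. If $V_i \in I(n')$, then by Lemma~\ref{invariants_super} the group $\UU^{n'+1}(\Aa)$ acts trivially on $V_i$ while $\UU^{n'}(\Aa)$ has no nonzero invariants. Since $V_i$ is irreducible and $K_n$ is normal in $K$, we have $V_i^{K_n} = V_i$ if $n' \leq n$ and $V_i^{K_n} = 0$ otherwise. Therefore
$$d(n) = \sum_{n' = 2m}^{n} |I(n')| \cdot (\dim V_i)^2.$$
By Proposition~\ref{size_I(n)} we have $|I(n')| \sim q_E^{n'}$, and by Lemma~\ref{dim_V} we have $(\dim V_i)^2 \sim (K : \UU^{n'+1}(\Aa))\, q_E^{-n'}$ for $V_i \in I(n')$, so each individual summand is $\sim (K : K_{n'})$. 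Because the indices $(K : K_{n'})$ grow geometrically in $n'$, the sum is dominated by its top term and $d(n) \sim (K : K_n)$.

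Proposition~\ref{criterion} then immediately yields that $\{V_i\}_{i \in I}$ captures $\OO\br{K}$, and hence every projective object in $\Mod^{\pro}_K(\OO)$ by Lemma~\ref{product}. I do not expect a serious obstacle here: all the hard work has been packaged into Proposition~\ref{size_I(n)} (counting types modulo the intertwining relations) and Lemma~\ref{dim_V} (computing the dimension of each type in terms of indices of congruence subgroups), and the remaining task is the essentially mechanical verification that the exponents match so that the numerical criterion applies. The mild subtlety is that one must ensure that the asymptotic growth rates are taken with respect to a filtration of the form covered by Proposition~\ref{criterion}, which is precisely the role played by Proposition~\ref{uniform} and Lemma~\ref{p_power}.
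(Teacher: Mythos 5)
Your proposal is correct and follows essentially the same route as the paper: apply Proposition~\ref{criterion} with $K=\UU^{m+1}(\Aa)$, $K_n=\UU^{n+1}(\Aa)$, verify the filtration hypotheses via Proposition~\ref{uniform} and Lemma~\ref{p_power}, identify the invariants via Lemma~\ref{invariants_super}, and plug in Proposition~\ref{size_I(n)} and Lemma~\ref{dim_V}. If anything you are a little more careful than the paper, since the paper's displayed estimate is only over the top level $I(n)$ while $d(n)$ in Proposition~\ref{criterion} runs over all $I(n')$ with $n'\le n$; you correctly note that the individual layers $\sim(K:K_{n'})$ form a geometric sum dominated by its top term, so the two are equivalent.
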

 \begin{proof} We will deduce the assertion from Proposition \ref{criterion} with $K=\UU^{m+1}(\Aa)$ and $K_n=\UU^{n+1}(\Aa)$. Proposition \ref{uniform} and Lemma \ref{p_power} imply that 
 $\{K_n\}_{n\ge m}$ satisfy the conditions of Proposition \ref{criterion}.  Lemma \ref{invariants_super} implies that $i\in I(n)$ if and only if $\UU^{n+1}(\Aa)$ acts trivially on $V_i$ and 
 $\UU^n(\Aa)$ acts nontrivially on $V_i$. Thus it is enough to show that 
 $$\sum_{i\in I(n)} (\dim V_i)^2 \sim (\UU^{m+1}(\Aa): \UU^{n+1}(\Aa)).$$
 This follows from Lemma \ref{dim_V} and Proposition \ref{size_I(n)}.
 \end{proof}

 \section{Capture a la Scholze}\label{sec_scholze}
 
 The following section is motivated by the proof of Corollary 7.3 in \cite{scholze}. Let $K$ be a pro-finite group and let $\{K_n\}_{n\ge 1}$ be a basis of open neighbourhood of
 $1$ consisting of normal subgroups of $K$. We assume that $K_n$ is pro-$p$ for all $n\ge 1$. Let 
 $\psi_n: K_n \rightarrow \overline{L}^*$ be a smooth character for all $n\ge 1$. 
 
 Let $R$ be a complete noetherian local $\OO$-algebra with residue field $k$. Let $M$ be a
 finitely generated $R\br{K}$-module. 
 
 \begin{prop}\label{easy_does_it} Assume $M$ is projective as $\OO\br{K}$-module. If  $\phi\in\End^{\cont}_{R\br{K}}(M)$ kills $\Hom_{\OO\br{K_n}}^{\cont}(M, \psi_n^*)$ for all $n\ge 1$ then 
 $\phi$ kills $M$.
 \end{prop}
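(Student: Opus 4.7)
The plan is to recast the hypothesis as the containment $\phi(M)\subseteq\bigcap_n J_n M$, where $J_n\subseteq\OO\br{K_n}$ is the kernel of the $\OO$-algebra homomorphism $\chi_n\colon\OO\br{K_n}\to\OO$ extending $\psi_n^{-1}$, and then to prove that this intersection vanishes by a mod-$\varpi$ reduction followed by a Nakayama-style d\'evissage.

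First, since $\OO\br{K}$ is finite free over $\OO\br{K_n}$, the module $M$ is projective (hence flat) over $\OO\br{K_n}$, so $M/J_n M = M\otimes_{\OO\br{K_n}}\OO$ is $\OO$-flat and in particular $\OO$-torsion free. Any continuous $\OO\br{K_n}$-linear map $f\colon M\to\psi_n^{*}$ factors through $M/J_n M\to L$, and because the quotient is $\OO$-torsion free the continuous $\OO$-linear functionals with values in $L$ separate its points. Hence the common kernel of the maps in $\Hom^{\cont}_{\OO\br{K_n}}(M,\psi_n^{*})$ is exactly $J_n M$, and the hypothesis $f\circ\phi=0$ translates into $\phi(M)\subseteq J_n M$ for every $n$; set $X:=\bigcap_n J_n M$.

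For the mod-$\varpi$ step, the combination ``$K_n$ pro-$p$ plus $\psi_n$ smooth'' forces $\psi_n(K_n)\subseteq\mu_{p^\infty}(\overline L)$, and these $p$-power roots of unity are congruent to $1$ modulo $\varpi$ (after enlarging $L$, or equivalently working inside $\OO_{\overline L}$ and then descending). Consequently $\chi_n$ reduces modulo $\varpi$ to the augmentation $\varepsilon\colon k\br{K_n}\to k$, and $J_n+\varpi\OO\br{K_n}=I_n+\varpi\OO\br{K_n}$ with $I_n$ the augmentation ideal. The image of $J_n M$ in $\bar M:=M/\varpi M$ is therefore $I_n\bar M=\ker(\bar M\twoheadrightarrow\bar M_{K_n})$. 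Now $\bar M$ is projective over $k\br{K}$, hence a direct summand of some product $\prod_{i\in I}k\br{K}$. From $k\br{K}=\varprojlim_n k[K/K_n]$ one deduces $\bigcap_n I_n k\br{K}=0$, and this passes coordinatewise to the product and then to the direct summand $\bar M$ to give $\bigcap_n I_n\bar M=0$. Thus the image of $X$ in $\bar M$ is zero, i.e.\ $X\subseteq\varpi M$.

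For the d\'evissage, take $x\in X$; $\OO$-torsion freeness of $M$ produces a unique $y\in M$ with $x=\varpi y$. Each $M/J_n M$ is $\OO$-torsion free, so $\varpi y\in J_n M$ forces $y\in J_n M$ for every $n$, whence $y\in X$. Therefore $X=\varpi X$, and iterating yields $X\subseteq\bigcap_{k\geq 1}\varpi^k M=0$, the last equality because $M$ is $\varpi$-adically separated as a direct summand of the separated module $\OO\br{K}^I$. Combining the three steps, $\phi(M)=X=0$, as required.

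The main obstacle is the mod-$\varpi$ identification of $J_n$ with the augmentation ideal: it relies on the values of $\psi_n$ being $p$-power roots of unity and hence congruent to $1$ in the residue characteristic, a point that requires some care if the images live in ramified extensions of $L$ but is forced in any case by the hypotheses ``$K_n$ pro-$p$'' and ``$\psi_n$ smooth''. It is precisely this feature that allows the conclusion to hold for an arbitrary sequence of smooth characters $\psi_n$, without requiring them to fit a capture-style numerical criterion as in Section~\ref{sec_capture}.
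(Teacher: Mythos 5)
Your proposal takes a genuinely different route from the paper's, and the two differences are worth highlighting. First, you recast the hypothesis ideal-theoretically as $\phi(M)\subseteq\bigcap_n J_n M$ and never apply the functor $\Hom_{\OO\br{K_n}}^{\cont}(M,-)$ to a short exact sequence; the paper instead replaces $\psi_n^*$ by an $\OO_n$-lattice $\OO_n(\psi_n^*)$ and uses the sequence $0\to\OO_n(\psi_n^*)\xrightarrow{\varpi_n}\OO_n(\psi_n^*)\to k\to 0$ together with the projectivity of $M$ to descend to $k$-valued $\Hom$'s, i.e.\ to the $K_n$-coinvariants of $M/\varpi M$. Second, your mod-$\varpi$ step vanishes $\bigcap_n I_n\bar M$ using only that $\bar M$ is a direct summand of a product of copies of $k\br{K}$; the paper instead invokes the finite generation of $M$ over $R\br{K}$ and the induced $I_{m,n}$-adic topology (via the NSW reference). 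Your version of this step is slicker and in fact makes it clear that the mod-$\varpi$ reduction needs only the projectivity hypothesis, not finite generation.

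There are, however, two genuine gaps in your write-up. (i) The common kernel of the maps in $\Hom^{\cont}_{\OO\br{K_n}}(M,\psi_n^*)$ is the \emph{closure} $\overline{J_nM}$, not $J_nM$ itself: since $M/J_nM$ carries the quotient topology, continuous functionals to $L$ can only separate points of the Hausdorff quotient $M/\overline{J_nM}$, and your appeal to $\OO$-torsion-freeness does not address Hausdorffness. One does have $J_nM$ closed here, but only by using the standing hypothesis that $M$ is finitely generated over $R\br{K}$ (hence over $R\br{K_n}$), so that $J_nM = (J_nR\br{K_n})M$ is a submodule of a finitely generated module over a pseudocompact ring and therefore closed; this is precisely the hypothesis your argument was otherwise engineered to avoid, and it must be invoked explicitly. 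The same issue shows up again when you need $M/J_nM$ to be $\OO$-torsion-free in the d\'evissage step: the flatness of the compact-projective $M$ over $\OO\br{K_n}$ in the \emph{abstract} sense is not automatic for a general profinite $K_n$ (a product $\prod_i\OO\br{K_n}$ need not be abstractly flat unless $\OO\br{K_n}$ is, say, Noetherian). (ii) Your definition of $J_n$ as $\ker\bigl(\chi_n:\OO\br{K_n}\to\OO\bigr)$ already requires $\psi_n$ to be $\OO$-valued, which is false in general; the remedy is not to ``enlarge $L$'' (which would have to be done differently for each $n$ and then undone) but, as in the paper, to fix for each $n$ the ring $\OO_n$ generated by the values of $\psi_n$, observe that its residue field is still $k$, and carry $J_n$ and the subsequent arguments through $\OO_n\br{K_n}$ and $M\otimes_{\OO}\OO_n$. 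Both gaps are fixable and your overall strategy is sound, but as written the translation step $\phi(M)\subseteq J_nM$ is not fully justified.
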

 \begin{proof} Since $\psi_n$ is smooth, $\psi_n(K_n)$ is a finite subgroup of $\overline{L}^*$. Since $K_n$ is pro-$p$, the order of $\psi_n(K_n)$ is a power of $p$. Hence, we may assume that $\psi_n$ takes values in $\OO_n^{\times}$, where $\OO_n$ is 
 the ring of integers of a finite extension $L_n$ of $L$ obtained by adding some $p$-power roots of unity to $L$.  Let $\OO_n(\psi_n^*)$ be a free $\OO_n$-module of rank $1$, on which $K_n$ acts by $\psi_n^*$. Since $\OO_n(\psi_n^*)$ is an $\OO\br{K_n}$-submodule of $\psi_n^*$ we have an injection
  $$\Hom_{\OO\br{K_n}}^{\cont}(M, \OO_n(\psi_n^*))\hookrightarrow 
 \Hom_{\OO\br{K_n}}^{\cont}(M, \psi_n^*).$$
 Moreover, this is an injection of $\End_{\OO\br{K}}^{\cont}(M)$-modules. Thus $\phi$ annihilates 
 $$\Hom_{\OO\br{K_n}}^{\cont}(M, \OO_n(\psi_n^*))$$
  for all $n\ge 1$. Let $\varpi_n$ be a uniformizer in $\OO_n$. Then the residue field is equal to $k$ and all the $p$-power roots of unity in $\OO_n$ are mapped to $1$ in $k$.  We have an exact sequence of $K_n$-representations 
\begin{equation}\label{seq}
0\rightarrow \OO_n(\psi_n^*)\overset{\varpi_n}{\longrightarrow} \OO_n(\psi_n^*)\rightarrow k\rightarrow 0
\end{equation}
 with the trivial action of $K_n$ on the quotient. Since $M$ is projective as an $\OO\br{K}$-module, 
 it is also projective as an $\OO\br{K_n}$-module and 
 by applying $\Hom^{\cont}_{\OO\br{K_n}}(M, \ast)$ to \eqref{seq} we see that 
 $\Hom^{\cont}_{\OO\br{K_n}}(M, k)$ is a quotient of $\Hom_{\OO\br{K_n}}^{\cont}(M, \OO_n(\psi_n^*))$ and thus is  killed by $\phi$. This module is isomorphic to the continuous $k$-linear dual of $K_n$-coinvariants 
 of $M/\varpi M$, which we denote by $(M/\varpi M)_{K_n}$. Using Pontryagin duality we conclude that $\phi$ acts trivially on $(M/\varpi M)_{K_n}$.  
 
 Let $\mm$ be the maximal ideal of $R$ and let $I_{m,n}= \mm^m R\br{K} + I(K_n)R\br{K}$, where $I(K_n)$ is 
 the augmentation ideal of $\OO\br{K_n}$. Since $M$ is finitely generated as $R\br{K}$-module, its topology coincides with the topology defined by the ideals $I_{m,n}$, see \cite[(5.2.17)]{nsw2}. Thus 
 $$M/\varpi M \cong \varprojlim_{n, m}  (M/\varpi M)/ I_{n,m} (M/\varpi M). $$
 Since $(M/\varpi M)/ I(K_n) ( M/\varpi M)= (M/\varpi M)_{K_n}$, by the previous part we deduce that $\phi$ kills all the terms in this projective limit. Thus $\phi$ kills $M/\varpi  M$ and so $\phi(M)$ is contained in $\varpi M$.
 
  If $\phi$ is non-zero then there is a largest integer $n$ such that $\phi(M)$ is contained in $\varpi^n M$ and 
  not contained in 
 $\varpi^{n+1} M$. We may replace $\phi$ by $\frac{1}{\varpi^n}\phi $ in the argument above to obtain a contradiction. 
 \end{proof}
 
 \begin{cor}\label{easy} Let 
$M$ be as in Proposition \ref{easy_does_it} and assume that $R$ acts faithfully on $M$. Let $\{ V_i\}_{i\in I}$ be the family of irreducible subquotients of $\Ind_{K_n}^K \psi_n$ for all $n\ge 1$.  For each $i\in I$ let $\mathfrak a_i$ be the $R$-annihilator of 
$\Hom_{\OO\br{K}}^{\cont}(M, V_i^*)$.  Then $\cap_{i\in I} \mathfrak a _i=0$. 
\end{cor}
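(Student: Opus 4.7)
The plan is to take an arbitrary $r \in \bigcap_{i\in I} \mathfrak{a}_i$ and apply Proposition \ref{easy_does_it} to the endomorphism $\phi_r \in \End^{\cont}_{R\br{K}}(M)$ given by multiplication by $r$, thereby concluding that $r M = 0$; by the faithfulness of the $R$-action on $M$, this forces $r=0$, which is what we want. Thus the task reduces to verifying the hypothesis of Proposition \ref{easy_does_it}, namely that $\phi_r$ annihilates $\Hom^{\cont}_{\OO\br{K_n}}(M,\psi_n^*)$ for every $n \geq 1$.

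First I would invoke Frobenius reciprocity (the adjunction between restriction and induction) to rewrite
$$\Hom^{\cont}_{\OO\br{K_n}}(M,\psi_n^*) \;\cong\; \Hom^{\cont}_{\OO\br{K}}\bigl(M,\Ind_{K_n}^K \psi_n^*\bigr),$$
and then use the fact that $K_n$ is open in the profinite group $K$, so $[K:K_n]$ is finite, and thus $\Ind = \coInd$; in particular this identifies $\Ind_{K_n}^K \psi_n^*$ with $(\Ind_{K_n}^K \psi_n)^*$ as finite-dimensional $\overline L$-representations of $K$.

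The crucial observation is that, since $K_n$ is normal, $\Ind_{K_n}^K \psi_n$ factors through the finite quotient $K/K_n$; because $\overline L$ has characteristic $0$, Maschke's theorem gives a direct sum decomposition
$$\Ind_{K_n}^K \psi_n \;\cong\; \bigoplus_j V_{i_j}^{\oplus m_j}$$
into irreducible constituents, which by definition belong to the family $\{V_i\}_{i\in I}$. Dualising and applying $\Hom^{\cont}_{\OO\br{K}}(M,-)$ yields
$$\Hom^{\cont}_{\OO\br{K_n}}(M,\psi_n^*) \;\cong\; \bigoplus_j \Hom^{\cont}_{\OO\br{K}}(M,V_{i_j}^*)^{\oplus m_j},$$
and each summand on the right is annihilated by $r$ because $r\in\mathfrak{a}_{i_j}$. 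Hence $\phi_r$ kills the whole space, and Proposition \ref{easy_does_it} applies.

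There is no serious obstacle in this argument; it is essentially formal. The only point worth emphasising is the role of Maschke's theorem: without semisimplicity one could only run a d\'evissage along a composition series of length $[K:K_n]$, which would yield that $r^{[K:K_n]}$ kills the Hom space rather than $r$ itself, and the dependence of the exponent on $n$ would prevent one from concluding via Proposition \ref{easy_does_it}. The characteristic-zero hypothesis on $\overline L$ (together with the normality and finite index of $K_n$) is exactly what lets us convert a filtration into a direct sum and so obtain a uniform statement.
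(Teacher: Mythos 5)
Your proof is correct and essentially follows the paper's own argument: Frobenius reciprocity identifies $\Hom^{\cont}_{\OO\br{K_n}}(M,\psi_n^*)$ with $\Hom^{\cont}_{\OO\br{K}}(M,(\Ind_{K_n}^K \psi_n)^*)$, and the finite-dimensional $K$-representation $\Ind_{K_n}^K \psi_n$ decomposes as a direct sum of $V_i$'s with multiplicity; then Proposition \ref{easy_does_it} applied to multiplication by any $r \in \cap_i \mathfrak a_i$, together with faithfulness, gives $r = 0$. The only thing I would flag is a small slip in the justification of semisimplicity: $\Ind_{K_n}^K \psi_n$ does \emph{not} factor through $K/K_n$, since $K_n$ acts nontrivially (via the character $\psi_n$, which is nontrivial in general). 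What saves the argument is that $\psi_n$ is \emph{smooth}, hence trivial on some open normal subgroup $N \subset K_n$, so that $\Ind_{K_n}^K \psi_n$ factors through the finite quotient $K/N$; Maschke's theorem applied to $K/N$ then gives the semisimplicity you need, for exactly the reason you emphasise at the end (so that $r$, not just some power $r^{[K:K_n]}$, kills the Hom space).
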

\begin{proof} This follows from Proposition \ref{easy_does_it} by observing that $\Ind_{K_n}^K \psi_n$ is a finite direct 
sum of some $V_i$'s with some multiplicity and then using Frobenius reciprocity to obtain an isomorphism of $R$-modules
$$\Hom^{\cont}_{\OO\br{K_n}}(M, \psi_n^*)\cong \Hom_{\OO\br{K}}^{\cont}(M, (\Ind_{K_n}^K \psi_n)^*).$$
\end{proof}

\begin{remar} Although the theory outlined in  section \ref{generalities} is more general, 
all the smooth examples in section \ref{examples} are covered by Corollary \ref{easy}, 
which avoids all the intricate counting arguments. For example in the supercuspidal case, 
considered in section \ref{sec_super}, it is enough to apply Corollary \ref{easy} with $K=\UU(\Aa)$ and 
consider families  $(\UU^{[n/2]+1}(\Aa), \psi_{\alpha})$, for all $n\ge 1$, such that $[\Aa, n, 0, \alpha]$ 
is a simple stratum, $E=F[\alpha]$ and $\alpha$ is minimal over $F$.
\end{remar}

\section{Applications}\label{sec_appl}

\subsection{Global applications}\label{global}

Let $\mathbb G$ be a reductive group over $\mathbb Q$ with the property
that the maximal $\mathbb Q$-split torus in the centre
of $\mathbb G$ is a maximal $\mathbb R$-split torus in $\mathbb G$.
This means that $\mathbb G(\mathbb R)$ is compact modulo its centre,
and that if the level $K_f$ is small enough, then
$\mathbb G(\mathbb Q)$ acts 
with trivial stabilizers
on $\mathbb G(\mathbb A)/A_{\infty}^{\circ} K_{\infty}^{\circ} K_f.$ We 
let $$ Y(K_f):= \mathbb G(\mathbb Q)\backslash\mathbb G(\mathbb A)/A_{\infty}^{\circ} K_{\infty}^{\circ} K_f.$$
Here we use the usual notation that $A_{\infty}$ is the group
of real points of the maximal $\mathbb Q$-split torus in the
centre of $\mathbb G$, and $K_{\infty}$ denotes
a choice of maximal compact subgroup of $\mathbb G(\mathbb R)$;
and, of course, the superscript ${}^{\circ}$ denotes the connected
component of the identity.

We are in the situation of \cite[(3.2)]{em0}.
In particular, we may consider the completed cohomology
$\widetilde{H}^0(K^p)$ with $\mathcal O$-coefficients,
for some choice of tame level $K^p$;  
recall that this is defined as 
$$\widetilde{H}^0(K^p) := \varprojlim_s \varinjlim_{K_p}
H^0( Y(K_pK^p) , \mathcal O/ \varpi^s),$$
where $K_p$ runs over all compact open subgroups of $\mathbb G(\Q_p)$.
We may also consider the completed {\em homology}, with $\mathcal O$-coefficients,
which we denote by $M$;
it is defined as
$$M := \varprojlim_{K_p} 
H_0(Y(K_pK^p) ,\mathcal O).$$
We then have 
that $M$ is finitely generated and (if $K^p$ is small enough)
free over~$\mathcal O\br{K}$, and
\begin{equation}\label{ref_wants}
\widetilde{H}^0(K^p) = \Hom^{\cont}_{\mathcal O}(M,\OO),
\end{equation}
where $K$ is a compact open subgroup of $\mathbb G(\Qp)$.

For each $K_p$, let $\mathbb T(K_p)$ denote the commutative algebra
of endomorphisms of
$H_0
(Y(K_pK^p) ,
\mathcal O)$
generated by the spherical Hecke operators
at the primes $\ell \neq p$ at which $K^p$ is unramified (i.e.\ at which
$\mathbb G$ is unramified and $K^p$ is of the form
$K^{p,\ell} K_{\ell}$, where $K_{\ell}$ is maximal compact hyperspecial).
We then define
$$\mathbb T :=  \varprojlim_{K_p} \mathbb T(K_p);$$
this is an algebra of $G$-equivariant endomorphisms of $M$.
We endow each $\mathbb T(K_p)$ with its $\varpi$-adic topology,
and endow $\mathbb T$ with its projective limit topology.
Then $\mathbb T$ is a pro-Artinian $\mathcal O$-algebra
(indeed, since each $\mathbb T(K_p)$ is finitely generated as an $\mathcal O$-modules,
we may describe $\mathbb T$ as
the projective limit of (literally) finite $\mathcal O$-algebras),
and is the product (as a topological ring) of finitely many local pro-Artinian
$\mathcal O$-algebras,
these local factors being in bijection with the maximal ideals of~$\mathbb T$;
we let $\mathbb T_{\mathfrak m}$ denote the factor corresponding to a given
maximal ideal~$\mathfrak m$.
The local factor $\mathbb T_{\mathfrak m}$ is naturally identified with
the $\mathfrak m$-adic completion of $\mathbb T$ as an abstract ring; 
note, though, that the topology on $\mathbb T_{\mathfrak m}$ coincides with the
$\mathfrak m$-adic topology if and only if $\mathbb T_{\mathfrak m}$
is Noetherian.   In fact, because of the anticipated relationship
with Galois deformation rings, we expect that $\mathbb T$,
and hence each $\mathbb T_{\mathfrak m}$, {\em is} Noetherian,
but as far as we know, this is not proved in the generality that we consider here.
If $V$ is any locally algebraic representation of~$K$,
then it follows from \eqref{ref_wants} and Schikhof duality \cite[Thm.\,3.5]{iw} that
$$\Hom_{\mathcal O\br{K}}^{\cont}(M,V^*) =
\Hom_{ K} \bigl(V, \widetilde{H}^0(K^p)\bigr),$$
and the target is isomorphic to the space of algebraic automorphic forms
on $\mathbb G$ with values in $V^*$ \cite[Prop.\,3.2.4]{em0}.
In particular, the action of $\mathbb T \otimes_{\mathcal O}L$ on
this space is semi-simple.

Now, let $\mathfrak m$ be a maximal ideal of $\mathbb T$.
We may form the localization $M_{\mathfrak m}$; this is a direct
summand of $M$, and so projective as an $\mathcal O\br{K}$-module.
It has a commuting action of the local ring $\mathbb T_{\mathfrak m}$.
The space $\Hom_{\mathcal O\br{K}}^{\cont}(M_{\mathfrak m},V^*)$
is  a finite-dimensional $L$-vector space.
The action of $\mathbb T_{\mathfrak m}$ on this space
factors through an order in the finite-dimensional $L$-algebra
generated by the Hecke operators.

Remark~\ref{rem:semi-simple} applies, and so Proposition~\ref{dense_cap} together with Remark \ref{nonnoetherian}
shows that if the set $\{V_i\}_{i \in I}$ captures $M$,
then the systems of Hecke eigenvalues arising
in the spaces of algebraic automorphic forms
$\Hom_{\mathcal O\br{K}}^{\cont}(M,V_i^*)$ 
are Zariski dense in $\Spec \mathbb T_{\mathfrak m}$.

\subsection{Hecke algebras and functoriality} Let us suppose that we have two groups $\mathbb G_1$ and $\mathbb G_2$ as in the previous 
subsection. We keep the notation of the previous section, 
except that we add the index $1$ or $2$ to indicate with respect to which group the objects are defined. Assume that both Hecke algebras $\mathbb T_1$ and $\mathbb T_2$ are completions of the quotients 
of the same universal Hecke algebra $\mathbb T^{\univ}$, which is a polynomial ring over 
$\OO$ with variables corresponding to the Hecke operators at the unramified places for both groups.
Let $\mm$ be a maximal ideal of $\mathbb T^{\univ}$ and assume that $\mm$ is a maximal ideal of both $\mathbb T_1$ and $\mathbb T_2$. Then both $\mathbb T_{1,\mm}$ and $\mathbb T_{2,\mm}$ are quotients of the $\mm$-adic completion of $\mathbb T^{\univ}$, which we denote by $\widehat{\mathbb T}^{\univ}_{\mm}$.

Let $\{V_i\}_{i\in I}$ be a family of irreducible  locally algebraic representations of $K_1$, which captures $M_1$. 
As explained above, a maximal ideal $x\in \mSpec \widehat{\mathbb T}^{\univ}_{\mm}[1/p]$ in the
support of $\Hom_{ K_1} \bigl(V_i, \widetilde{H}^0(K_1^p)_{\mm}\bigr)$ corresponds to an algebraic
automorphic form on $\mathbb G_1$. Let us further assume that functoriality between 
$\mathbb G_1$ and $\mathbb G_2$ is available and we may transfer such automorphic forms 
on $\mathbb G_1$ to automorphic forms on $\mathbb G_2$. More precisely, we assume that 
for such $x\in \mSpec \widehat{\mathbb T}^{\univ}_{\mm}[1/p]$, the map $ \widehat{\mathbb T}^{\univ}_{\mm}\rightarrow \kappa(x)$, where $\kappa(x)$ is the residue field of $x$, fits into a commutative diagram:
\begin{displaymath}
\xymatrix{ \widehat{\mathbb T}^{\univ}_{\mm}\ar@{>>}[r]\ar@{>>}[d]& \mathbb T_{1,\mm}\ar[d]\\
\mathbb T_{2,\mm} \ar[r] &\kappa(x).}
\end{displaymath}
Let $\Sigma$ be the subset of $\mSpec \widehat{\mathbb T}^{\univ}_{\mm}[1/p]$ consisting of maximal ideals contained in the support of $\Hom_{ K_1} \bigl(V_i, \widetilde{H}^0(K_1^p)_{\mm}\bigr)$
for some $i\in I$. Since $\{V_i\}_{i\in I}$ captures $M_1$ it will also capture its direct summand 
$M_{1, \mm}$. Hence, the image of $\widehat{\mathbb T}^{\univ}_{\mm} \rightarrow \prod_{x\in \Sigma} \kappa(x)$ is equal to $\mathbb T_{1, \mm}$. The commutativity of the above diagram implies that this map factors through $\mathbb T_{2, \mm}$. Thus we obtain a surjection 
$\mathbb T_{2, \mm}\twoheadrightarrow \mathbb T_{1, \mm}$ and so the Hecke algebra $\mathbb T_{2, \mm}$ acts on the completed cohomology $\widetilde{H}^0(K^p_1)$.

The meta-argument outlined above can be modified to give a different proof of Corollary 7.3 in 
\cite{scholze}, where the functoriality is given by the (classical) Jacquet--Langlands correspondence, by 
taking $\{V_i\}_{i\in I}$ to be a family of supercuspidal types considered in subsection \ref{sec_super} and 
using Proposition \ref{capture_super} together with Proposition \ref{types_super}. A similar example has 
been  discussed in \cite[ \S 3.3.2]{matt_icm}.

\subsection{The case of definite unitary groups}
In the case when $\mathbb G = GU(n)$,
we let $\mathbb T_{\mathfrak m}'$ denote the subring
of $\mathbb T_{\mathfrak m}$
topologically generated by spherical Hecke operators
at unramified primes $\ell$ which are furthermore split
in the appropriate quadratic imaginary field.
The ring $\mathbb T_{\mathfrak m}'$ is a subring
of $\mathbb T_{\mathfrak m}$,
and (assuming that $\mathfrak m$ corresponds to an irreducible
$\overline{\rho}$) is a quotient
of a global deformation ring $R_{\overline{\rho}}$.  
(See \cite[\S 2.4]{six-authors} for an indication of what exact
kinds of Galois representations $R_{\overline{\rho}}$ parameterizes.)

Thus we get the Zariski density in $\Spec \mathbb T_{\mathfrak m}'$
of the collection of Galois representations arising from the various spaces
$\Hom_{\mathcal O\br{K}}^{\cont}(M,V_i^*).$

We formulate this discussion into a theorem to make it more concrete. We freely use the notation of \cite{six-authors}. 
 Let $R$ be the Hecke algebra denoted by 
$\TT^{S_p}_{\xi, \tau}(U^{\pp}, \OO)_{\mm}$ in \cite{six-authors} before Corollary 2.11 and let $M= \tilde{S}_{\xi, \tau}(U^\pp, \OO)^d_\mm$. 
It follows from the proof of \cite[Prop.\,2.10]{six-authors}  that $\tilde{S}_{\xi, \tau}(U^\pp, \OO)^d_\mm$ is a finitely generated projective 
$\OO\br{\GL_n(\OO_F)}$-module, where $\pp$ is a fixed place above $p$ of totally real field denoted by $\widetilde{F}^+$ in \cite{six-authors} and 
$F$ is a finite extension of $\Qp$ such that $F$ is the $\pp$-adic completion of  $\widetilde{F}^+$.

 Let $\Sigma^{\mathrm{cl}}$ be the subset of $\mSpec R[1/p]$ corresponding to the Hecke eigenvalues corresponding 
to the classical (algebraic) automorphic forms on the unitary group considered in \cite[\S 2.3]{six-authors}. To each $x\in \Sigma^{\mathrm{cl}}$
one may associate a Galois representation, which is potentially semi-stable at $\pp$. For each embedding $\kappa: F\rightarrow \Qpbar$ we fix 
an $n$-tuple of distinct integers $k_{\kappa}:=(k_{\kappa, 1}> k_{\kappa, 2} > \ldots > k_{\kappa, n})$ and let $\underline{k}$ be the multiset 
$\{k_\kappa\}_{\kappa: F \hookrightarrow \Qpbar}$.  Let $\Sigma(\underline{k})$ be the subset of $\Sigma^{\mathrm{cl}}$
 corresponding to the automorphic forms such that the restriction of the associated Galois representation at the decomposition group at $\pp$ 
 has Hodge--Tate weights equal to $\underline{k}$. 
 
 To each classical automorphic form we may associate an automorphic representation.  Let $\Sigma_{ps}$ be the subset of $\Sigma^{\mathrm{cl}}$ such that 
 the local factor of the automorphic representation at $\pp$ is a principal series representation of $\GL_n(F)$. In terms of Galois representations
 the subset $\Sigma_{ps}$ correspond to those automorphic forms such that the restriction of the associated Galois representation at the decomposition group at $\pp$ becomes crystalline after a restriction to the Galois group of a finite abelian extension of $F$.

 We fix either a totally ramified or unramified extension $E$ of $F$ of degree $n$ and identify $\GL_n(F)$ with $\Aut_F(E)$. More generally we could take 
 any extension $E$ of $F$ of degree $n$, which is generated by a minimal element in the sense of section \ref{sec_super}. Let $\Sigma_{sc}$ be the subset 
 of $\Sigma^{\mathrm{cl}}$ such that  the local factor of the automorphic representation at $\pp$ is a supercuspidal representation of $\GL_n(F)$ containing a 
 minimal stratum $[\Aa, m, 0, \alpha]$ with $E= F[\alpha]$. If $E$ over $F$ is tamely ramified and $\rho$ is a Galois representation associated to an automorphic form 
 in $\Sigma_{sc}$ then the Weil--Deligne representation associated to 
 the restriction of $\rho$ to the decomposition group at $\pp$ is isomorphic to the induction from $W_E$ to $W_F$ of a $1$-dimensional representation; the 
 requirement for $[\Aa, m, 0, \alpha]$ to be minimal 
  imposes further restrictions on the $1$-dimensional representation of $W_E$.

 \begin{thm}\label{dense_hecke} The sets $\Sigma(\underline{k})\cap \Sigma_{ps}$ and $\Sigma(\underline{k})\cap \Sigma_{sc}$ are both Zariski dense inside  $\Spec \TT^{S_p}_{\xi, \tau}(U^{\pp}, \OO)_{\mm}$.
 \end{thm}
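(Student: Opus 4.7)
The plan is to combine the capture results of Sections~\ref{sec_prince} and~\ref{sec_super} with the density criterion in Proposition~\ref{dense_cap}, following the meta-argument sketched in Section~\ref{global}. By \cite[Prop.\,2.10]{six-authors} the module $M := \tilde{S}_{\xi,\tau}(U^{\pp},\OO)^d_{\mm}$ is finitely generated and projective over $\OO\br{\GL_n(\OO_F)}$, and $R = \TT^{S_p}_{\xi,\tau}(U^{\pp},\OO)_{\mm}$ acts faithfully on it. First I pass to a small enough compact open subgroup $K'\subseteq\GL_n(\OO_F)$: namely $K' = I^{n_0}$ in the principal series case and $K' = \UU^{m+1}(\mathfrak A)$ in the supercuspidal case. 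Since $\OO\br{\GL_n(\OO_F)}$ is free as an $\OO\br{K'}$-module, $M$ remains projective over $\OO\br{K'}$, and the $R$-action still commutes with the $K'$-action.

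Let $W$ denote the irreducible algebraic $L$-representation of $\Res^F_{\Qp}\GL_{n/F}$ whose highest weight matches $\underline{k}$, and let $\{V_i\}_{i\in I}$ denote the relevant type family: the $\{V_{\chi}\}_{\chi\in X}$ of Section~\ref{sec_prince} for the principal series case, or the $\{V(\alpha,\theta)\}$ of Section~\ref{sec_super} attached to our chosen extension $E/F$ for the supercuspidal case. The capture statement at the end of Section~\ref{sec_prince} (respectively Proposition~\ref{capture_super}) gives that $\{V_i\}_{i\in I}$ captures every projective object in $\Mod^{\pro}_{K'}(\OO)$; Lemma~\ref{weight_proj} then yields the same conclusion for the twisted family $\{V_i\otimes_L W\}_{i\in I}$, which in particular captures $M$. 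For each $i$ the finite-dimensional $L$-vector space $\Hom^{\cont}_{\OO\br{K'}}(M,(V_i\otimes W)^*)$ is a space of classical algebraic automorphic forms of fixed level and weight, on which the Hecke action of $R[1/p]$ is semi-simple, so Remark~\ref{rem:semi-simple} applies and Proposition~\ref{dense_cap} produces Zariski density in $\Spec R$ of
\begin{equation*}
\Sigma' := \{x\in\mSpec R[1/p] : \Hom_{K'}(V_i\otimes W,\Pi_x)\neq 0 \text{ for some } i\in I\}.
\end{equation*}

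It remains to identify $\Sigma'$ with $\Sigma(\underline{k})\cap\Sigma_{ps}$ (respectively $\Sigma(\underline{k})\cap\Sigma_{sc}$). This is local--global compatibility for the definite unitary group, as developed in \cite[\S\,2.3--2.4]{six-authors}: for a classical point $x$ the locally algebraic vectors $\Pi_x^{\mathrm{l.alg}}$ are isomorphic to $\pi_{\pp,x}^{\sm}\otimes_L W_{\underline{k}(x)}$, where $\pi_{\pp,x}$ is the local factor at $\pp$ of the associated automorphic representation and $W_{\underline{k}(x)}$ is the algebraic representation of highest weight determined by the Hodge--Tate weights $\underline{k}(x)$ of $\rho_x|_{G_{F_{\pp}}}$. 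Hence non-vanishing of $\Hom_{K'}(V_i\otimes W,\Pi_x)$ forces $\underline{k}(x)=\underline{k}$ together with $\Hom_{K'}(V_i,\pi_{\pp,x}^{\sm})\neq 0$, which by Propositions~\ref{types_prince} and~\ref{types_super} characterises $\pi_{\pp,x}$ as a principal series (respectively as a supercuspidal induced from a minimal simple stratum $[\mathfrak A, n', 0, \alpha]$ with $E=F[\alpha]$); the converse identification goes the same way. Zariski density of $\Sigma(\underline{k})\cap\Sigma_{ps}$ and $\Sigma(\underline{k})\cap\Sigma_{sc}$ in $\Spec R$ then follows.

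The main obstacle I expect is the careful bookkeeping around the weight $\xi$ and inertial type $\tau$ already built into $\tilde{S}_{\xi,\tau}$ versus the $V_i\otimes W$ formalism: depending on conventions, either the twist by $W$ is redundant (if $\xi$ already encodes $\underline{k}$ on the nose) or it is precisely what converts a trivial-coefficient completed cohomology into the desired Hodge--Tate weight information, and $\tau$ must be compatible with the Bernstein components carrying the $V_i$ for the statement not to be vacuous. Neither point disturbs the overall structure; the substantive content lies in the capture proofs of Section~\ref{examples} (the dimension counts via Proposition~\ref{growth} feeding into Proposition~\ref{criterion}) and in the identification of local types via Bushnell--Kutzko theory.
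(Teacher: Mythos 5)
Your proposal is correct and follows essentially the same route as the paper: capture of projectives via Proposition~\ref{capture_super} (resp.\ the principal series analogue), twisting by the algebraic representation $W$ (the paper routes this through Lemma~\ref{weight}, while you invoke the equivalent Lemma~\ref{weight_proj}), Proposition~\ref{dense_cap} together with Remark~\ref{rem:semi-simple}, and then local--global compatibility via \cite[Prop.\,3.2.4]{em0} and \cite[Prop.\,6.11]{gross} plus Propositions~\ref{types_prince} and~\ref{types_super} to identify the resulting dense set with $\Sigma(\underline{k})\cap\Sigma_{ps}$ (resp.\ $\Sigma(\underline{k})\cap\Sigma_{sc}$). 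The bookkeeping worry you flag is resolved by the conventions of \cite{six-authors}: $\xi,\tau$ encode data at the places of $S_p$ away from $\pp$, so the twist by $W$ at $\pp$ is genuinely what imposes the Hodge--Tate weight condition and is not redundant.
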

 
 \begin{proof} We will carry out the proof in the supercuspidal case. The proof in the principal series case is the same using the results of section \ref{sec_prince}. Out of the multiset $\underline{k}$ one may manufacture a highest weight and hence a corresponding irreducible algebraic representation 
 of $\mathbb G':=\Res^F_{\Qp} \GL_n/F$, see section \cite[\S 1.8]{six-authors}. Let $W$ be its evaluation at $L$ and restriction to $\GL_n(F)$ via 
 $\GL_n(F)=\mathbb{G}'(\Qp)\hookrightarrow \mathbb{G}'(L)$.  
 
 We will now apply the theory developed in section \ref{sec_super} with 
 $R=  \TT^{S_p}_{\xi, \tau}(U^{\pp}, \OO)_{\mm}$ and 
 $M= S(U^\pp, \OO)_{\mm}^d$. Let $K:=\UU^{m+1}(\Aa)$. 
 Since $M$ is projective as $\OO\br{\GL_n(\OO_F)}$-module 
 it is also projective as $\OO\br{K}$-module. 
 It follows from  Lemma \ref{weight} and Proposition \ref{capture_super} that the family $\{W\otimes V(\alpha, \theta)\}_{\alpha, \theta}$, where $\alpha$ runs over the elements of $E$, such that 
 $E=F[\alpha]$ such that $[\Aa, n', 0, \alpha]$ is a simple stratum with $n'\ge 2m$ and $\alpha$ is minimal over $F$ and $\theta$ runs over all simple characters
 $\theta\in \mathcal C(m, \alpha)$, captures~$M$. 
 Since the Hecke algebra acts faithfully on the module by construction 
 we only have to understand the
 $R$-annihilator of $\Hom^{\cont}_{\OO\br{K}}(M, (W\otimes V(\alpha, \theta))^*)$ and then the assertion follows from Proposition \ref{dense_cap}. We first note that 
 since $M$ is finitely generated over $\OO\br{K}$ and both $W$ and $V(\alpha, \theta)$ are finite dimensional $L$-vector space $\Hom^{\cont}_{\OO\br{K}}(M, (W\otimes V(\alpha, \theta))^*)$ is a finite dimensional $L$-vector space. By Schikhof 
 duality we get that 
$$\Hom^{\cont}_{\OO\br{K}}(M, (W\otimes V(\alpha, \theta))^*)\cong 
\Hom_K(W\otimes V(\alpha, \theta), \tilde{S}(U^{\pp}, \OO)_{\mm}\otimes_{\OO} L),$$
 where $\tilde{S}(U^{\pp}, \OO)_{\mm}\otimes_{\OO} L\cong \Hom^{\cont}_{\OO} (M, L)$ is an admissible unitary $L$-Banach space representation of $\GL_n(F)$. Since 
 $W\otimes V(\alpha, \theta)$ is a locally algebraic representation of $K$, the image of a $K$-invariant homomorphism will be contained in the subspace of locally algebraic vectors $(\tilde{S}(U^{\pp}, \OO)_{\mm}\otimes_{\OO} L)^\alg$. It follows from \cite[Prop.\,3.2.4]{em0} that the representation $(\tilde{S}(U^{\pp}, \OO)_{\mm}\otimes_{\OO} L)^\alg$ can be computed in terms of classical automorphic forms. 
 In particular, it follows from \cite[Prop.\,6.11]{gross} that the action of 
 $R[1/p]$ on 
 $(\tilde{S}(U^{\pp}, \OO)_{\mm}\otimes_{\OO} L)^\alg$ is semi-simple, and (essentially by definition) the support is equal to $\Sigma^{\mathrm{cl}}$. 
 Moreover, if $x\in \Sigma^{\mathrm{cl}}$ and $\mm_x$ is the corresponding maximal ideal of $R[1/p]$ then the subspace of $(\tilde{S}(U^{\pp}, \OO)_{\mm}\otimes_{\OO} L)^\alg$ annihilated by $\mm_x$ is isomorphic to $\pi_{x,\pp} \otimes W_x$, where 
 $\pi_{x,\pp}$ is the local component at $\pp$ of the automorphic representation associated to $x$, and $W_x$ is the restriction to $\GL_n(F)$ of the algebraic representation of $\mathbb{G}'$ encoding the Hodge--Tate weights of the Galois
 representation attached to $x$. Thus the action of $R[1/p]$ on
 $\Hom_K( W\otimes V(\alpha, \theta), (\tilde{S}(U^{\pp}, \OO)_{\mm}\otimes_{\OO} L)^\alg)$ is semisimple and the support is equal to a finite subset of $\Sigma^{\mathrm{cl}}$ consisting of $x$ such that $W\cong W_x$ and 
 $\Hom_K(V(\alpha, \theta), \pi_{x, \pp})\neq 0$. The first condition implies that 
 $x\in \Sigma(\underline{k})$, the second condition implies that $x\in \Sigma_{sc}$ via Proposition \ref{types_super}.
  \end{proof} 

\begin{remark}
In the recent paper \cite{HMS}, Hellman, Margerin and Schraen have proved
a ``big $R$ equals big $\TT$ theorem'' in the context of definite unitary groups.
We refer to their paper for the precise details, but note here that, when their
result applies, we may rephrase Theorem~\ref{dense_hecke} as a statement
about the Zariski density of certain sets of points in the $\Spec$
of a global Galois deformation ring.
\end{remark}

\subsection{Local application}
By applying the theory of capture to $M_{\infty}$
of the six author paper \cite{six-authors}, we will get density in the support
of $M_{\infty}$.  This support will equal the full local
deformation space, provided that this latter space is irreducible.
The proof of this is the subject of the next section.

Let $R^{\square}$ be the framed deformation ring of $\rhobar: G_F \rightarrow \GL_n(k)$. If $R$ is a local noetherian $R^{\square}$-algebra 
with residue field $k$
then to $x\in \mSpec R[1/p]$ one may associate Galois representation $\rho_x: G_F\rightarrow \GL_n(\kappa(x))$ by specialising the universal framed deformation along the map $R^{\square}\rightarrow R\rightarrow \kappa(x)$. We will define the local analogs of the sets $\Sigma^{\mathrm{cl}}$, 
$\Sigma(\underline{k})$, $\Sigma_{ps}$ and $\Sigma_{sc}$ defined in the previous section. 

Let $\Sigma^{\pst}$ be the subset of $\mSpec R[1/p]$ consisting of those
$x$ for which the representation $\rho_x$ is potentially semi-stable. Let $\Sigma(\underline{k})$ be the subset of $\Sigma^{\pst}$
such that the Hodge--Tate weights of $\rho_x$ are equal to $\underline{k}$. Let $\Sigma_{ps}$ be the subset of $\Sigma^{\pst}$
such that $\rho_x$ become crystalline after a restriction to the Galois group of an abelian extension of~$F$. Alternatively $\Sigma_{ps}$ can be described 
as those $x\in \Sigma^{\pst}$ such that the Weil--Deligne representation associated to $\rho_x$ corresponds to a principal series representation via the 
classical local Langlands correspondence. Let $E$ be either a totally ramified or an unramified extension of $F$ of degree $n$ as in the previous section. We 
let $\Sigma_{sc}$ be the subset of $\Sigma^{\pst}$  consisting of those $x$ such that the Weil--Deligne representation associated to $\rho_x$ corresponds to
a supercuspidal representation $\pi_x$ via the classical local Langlands correspondence, such that $\pi_x$ contains a simple stratum $[\Aa, n', 0, \alpha]$, such that $E=F[\alpha]$ and $\alpha$ is minimal over $F$. 

Recall that in \cite{six-authors} we have constructed a local noetherian $R^{\square}$-algebra $R_{\infty}$ with residue field $k$  and an arithmetically interesting $R_{\infty}[\GL_n(F)]$-module $M_{\infty}$. We assume that 
$p$ does not divide $2n$ and that $\rhobar$ admits a potentially crystalline lift of regular weight, which is 
  potentially diagonalisable, since this is assumed in \cite{six-authors}. As noted in Remark \ref{vac}, this last assumption will become redundant in the future. 
We recall that $M_{\infty}$ is projective as an $\OO\br{K}$-module.
(This follows from \cite[Prop.~2.10]{six-authors}, which
shows that  $M_{\infty}$ is projective over $S_{\infty}\br{K}$,
where $S_{\infty}$ is a certain formal power series ring in a finite
number of variables over $\OO$.)

\begin{thm}\label{capture_local} If $\mathfrak a$ denotes
	the $R_{\infty}$-annihilator of $M_{\infty}$ 
then $\Sigma(\underline{k}) \cap \Sigma_{ps}\cap  V(\mathfrak a)$ and 
$\Sigma(\underline{k}) \cap \Sigma_{sc}\cap  V(\mathfrak a)$ are both Zariski dense 
subsets of $V(\mathfrak a)$.
\end{thm}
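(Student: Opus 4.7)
The plan is to apply Proposition~\ref{dense_cap} to $R := R_\infty/\mathfrak{a}$, which by the definition of $\mathfrak{a}$ acts faithfully on $M_\infty$, in essentially the same way that Proposition~\ref{dense_cap} was used in the proof of Theorem~\ref{dense_hecke}. We treat the two cases in parallel, describing the supercuspidal case; the principal series case is identical, with section~\ref{sec_prince} replacing section~\ref{sec_super}.

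First, let $W$ denote the restriction to $\GL_n(F)$ of the algebraic representation of $\Res^F_{\Qp}\GL_n/F$ over $L$ whose highest weight encodes the Hodge--Tate weights $\underline{k}$. For $m$ large enough, set $K := \UU^{m+1}(\Aa)$, which is a torsion-free uniform pro-$p$ subgroup of $\GL_n(F)$, and consider the family $\{W \otimes V(\alpha,\theta)\}$ of locally algebraic $K$-representations, indexed by simple strata $[\Aa, n', 0, \alpha]$ with $\alpha \in E$ minimal over $F$ and by simple characters $\theta \in \mathcal{C}(m, \alpha)$. Since $M_\infty$ is a projective $\OO\br{\GL_n(\OO_F)}$-module by \cite[Prop.\,2.10]{six-authors}, and $\OO\br{\GL_n(\OO_F)}$ is free of finite rank over $\OO\br{K \cap \GL_n(\OO_F)}$, the restriction of $M_\infty$ to $K$ (shrinking $K$ if needed) is projective in $\Mod^{\pro}_K(\OO)$. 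Proposition~\ref{capture_super} together with Lemma~\ref{weight_proj} will then show that this family captures $M_\infty$.

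Next, we identify the $R$-annihilator $\mathfrak{a}_{\alpha,\theta}$ of $\Hom^{\cont}_{\OO\br{K}}(M_\infty, (W\otimes V(\alpha,\theta))^*)$ and establish semisimplicity of the $R[1/p]$-action. By Schikhof duality,
$$\Hom^{\cont}_{\OO\br{K}}\bigl(M_\infty, (W\otimes V(\alpha,\theta))^*\bigr) \cong \Hom_K\bigl(W\otimes V(\alpha,\theta), \Pi_\infty\bigr),$$
and any such $K$-homomorphism factors through the locally algebraic vectors $\Pi_\infty^{\alg}$. The structural results on $M_\infty$ established in \cite{six-authors} --- which ultimately trace back to classical automorphic forms on the definite unitary group used in the patching --- show that the $R[1/p]$-action on this Hom-space is semisimple, and that the support consists exactly of the classical points $x$ for which $W \cong W_x$ and $\Hom_K(V(\alpha,\theta), \pi_{x,\pp}) \neq 0$. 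By Proposition~\ref{types_super} (respectively Proposition~\ref{types_prince}) combined with local-global compatibility at $\pp$, this support is precisely $\Sigma(\underline{k}) \cap \Sigma_{sc} \cap V(\mathfrak{a}_{\alpha,\theta})$ (respectively the analogue involving $\Sigma_{ps}$).

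With these ingredients in place, Remark~\ref{rem:semi-simple} allows us to invoke Proposition~\ref{dense_cap}, yielding that the union $\bigcup_{\alpha,\theta} V(\mathfrak{a}_{\alpha,\theta}) = \Sigma(\underline{k}) \cap \Sigma_{sc} \cap V(\mathfrak{a})$ is Zariski dense in $\Spec R = V(\mathfrak{a})$, and similarly for the principal series case using the family $\{W \otimes V_\chi\}_{\chi\in X}$ of section~\ref{sec_prince}. The main obstacle in this plan is the penultimate step: simultaneously establishing semisimplicity of the $R[1/p]$-action and identifying the support with the expected classical locus. This is precisely where the arithmetic content of \cite{six-authors} enters --- via the description of $\Pi_\infty^{\alg}$ at classical points in terms of global automorphic forms --- after which the abstract capture machinery of section~\ref{generalities} formally converts this pointwise identification into the desired Zariski density.
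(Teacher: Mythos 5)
Your proposal matches the paper's proof in both structure and substance: the paper says the argument is ``essentially the same as the proof of Theorem~\ref{dense_hecke}'' and that the only new point to check is that each $R_{\infty}/\mathfrak a_{\alpha,\theta}$ is reduced with $\mSpec(R_\infty/\mathfrak a_{\alpha,\theta})[1/p] \subset \Sigma(\underline{k})\cap\Sigma_{sc}$, which it attributes to \cite[Lem.\,4.17]{six-authors} plus Proposition~\ref{types_super} and local--global compatibility. You have simply re-expanded this reference by replaying the argument of Theorem~\ref{dense_hecke}, so the two proofs coincide; the one place where you wave your hands --- ``structural results on $M_\infty$ \ldots trace back to classical automorphic forms'' --- is exactly what \cite[Lem.\,4.17]{six-authors} pins down.
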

\begin{proof} We will treat the supercuspidal case, since
	the principal series is similar (and easier).
	The proof is essentially the same as the proof of Theorem \ref{dense_hecke}. We only need to verify that  $R_{\infty}/\mathfrak a_{\alpha, \theta}$ is reduced and all the maximal ideals of $(R_{\infty}/\mathfrak a_{\alpha, \theta})[1/p]$ lie in $\Sigma(\underline{k})\cap \Sigma_{sc}$, where 
$\mathfrak a_{\alpha, \theta}$ is the $R_{\infty}$-annihilator of $\Hom_{\OO\br{K}}^{\cont}(M_\infty, (W\otimes V(\alpha, \theta))^*)$. This is proved in the same way as 
part 1 of \cite[Lem.\,4.17]{six-authors} by using Proposition \ref{types_super} together with local--global compatibility.
\end{proof}

\begin{lem}\label{going_down} Let $A$ and $B$ be complete local noetherian $\OO$-algebras with residue field $k$. Assume that 
$B$ is a flat $A$-algebra. Let $\Sigma$ be a subset of $\mSpec A[1/p]$ and let $\Sigma'$ be its preimage in 
$\mSpec B[1/p]$. If the closure of $\Sigma'$ in $\Spec B$ is a union of irreducible components of $\Spec B$ then 
the closure of $\Sigma$ in $\Spec A$ is a union of irreducible components of $\Spec A$.
\end{lem}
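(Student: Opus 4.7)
The approach is to use that the structure map $A\to B$ is a flat local homomorphism of complete local noetherian rings, hence automatically faithfully flat, and to combine this with going down (which comes for free from flatness) and with the dominance of the map on spectra induced by an injective ring map. The goal is to show that the closure of $\Sigma$ in $\Spec A$ is a union of irreducible components by transferring the analogous statement for $\overline{\Sigma'}$ via $f\colon \Spec B\to \Spec A$.

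First I would establish the equality $f(\Sigma')=\Sigma$. One inclusion holds by definition of $\Sigma'$ as the preimage of $\Sigma$. For the reverse, given $\mathfrak p\in\Sigma$, faithful flatness yields $B[1/p]/\mathfrak p B[1/p]\neq 0$; any maximal ideal of this nonzero ring pulls back to a prime $\mathfrak Q\subset B[1/p]$ whose quotient $B[1/p]/\mathfrak Q$ is a field, hence a closed point of $\Spec B[1/p]$, and by construction $\mathfrak Q\cap A[1/p]=\mathfrak p$, so $\mathfrak Q\in\Sigma'$ lies over $\mathfrak p$. Continuity of $f$ then gives $\overline{\Sigma}=\overline{f(\Sigma')}=\overline{f(\overline{\Sigma'})}$.

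Next, since $B$ is noetherian we write $\overline{\Sigma'}=\bigcup_{i=1}^r V(\mathfrak q_i)$ as a finite union, where by hypothesis the $\mathfrak q_i$ are minimal primes of $B$, and set $\mathfrak p_i:=f^{-1}(\mathfrak q_i)$. Going down (from flatness) forces each $\mathfrak p_i$ to be a minimal prime of $A$: a strictly smaller prime of $A$ would lift, by going down, to a prime strictly below $\mathfrak q_i$ in $B$, contradicting the minimality of $\mathfrak q_i$. Moreover, the injection $A/\mathfrak p_i\hookrightarrow B/\mathfrak q_i$ makes the induced map of spectra dominant, so $\overline{f(V(\mathfrak q_i))}=V(\mathfrak p_i)$. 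Combining the two steps yields
\[
\overline{\Sigma} \;=\; \overline{f(\overline{\Sigma'})} \;=\; \bigcup_{i=1}^r \overline{f(V(\mathfrak q_i))} \;=\; \bigcup_{i=1}^r V(\mathfrak p_i),
\]
which is a union of irreducible components of $\Spec A$.

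The only delicate point is the surjectivity $f\colon \Sigma'\to\Sigma$ in the first step: one needs a maximal ideal of $A[1/p]$ to lift to a \emph{closed} point of $\Spec B[1/p]$ rather than merely to some prime. This succeeds because the fiber ring $B[1/p]/\mathfrak p B[1/p]$ is a nonzero $\kappa(\mathfrak p)$-algebra (from faithful flatness), and any prime of $B[1/p]$ whose residue ring is a field is automatically maximal.
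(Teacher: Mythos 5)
Your proof is correct, and it is a genuinely more streamlined route than the paper's. The paper begins with a preliminary reduction: it passes to the maximal $p$-torsion-free quotient $B_{\mathrm{tf}}$ of $B$ and argues that minimal primes of $B[1/p]$ correspond to minimal primes of $B_{\mathrm{tf}}$, which are in turn minimal primes of $B$, so that it suffices to prove the statement with $A$, $B$ replaced by $A[1/p]$, $B[1/p]$. It then sets $\mathfrak b=\bigcap_{\qq\in\Sigma'}\qq$, takes $\mathfrak a=\mathfrak b\cap A[1/p]$, decomposes $\mathfrak b$ into minimal primes, and applies going down to conclude. You instead work directly with $f\colon\Spec B\to\Spec A$: you show $f(\Sigma')=\Sigma$ by faithful flatness plus the observation that a contraction of a maximal ideal must coincide with the already-maximal $\pp$, then use continuity to identify $\overline{\Sigma}$ with $\overline{f(\overline{\Sigma'})}$, decompose $\overline{\Sigma'}$ into $V(\qq_i)$'s, and push each component forward using dominance of $\Spec B/\qq_i\to\Spec A/\pp_i$ (from injectivity of $A/\pp_i\hookrightarrow B/\qq_i$). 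The core input — going down for the flat map and faithful flatness to guarantee surjectivity onto $\Sigma$ — is the same in both proofs, but your version bypasses the discussion of torsion-free quotients and the implicit step in the paper's proof that $V(\mathfrak b\cap A[1/p])$ actually equals the closure of $\Sigma$ (which itself requires the faithful-flatness surjectivity you make explicit). The one place where you are slightly informal is in sliding between $\Spec A$ and $\Spec A[1/p]$ when identifying $f(\Sigma')$ with $\Sigma$; since $\Spec A[1/p]$ is an open subscheme of $\Spec A$ and all the points involved avoid $p$, this is harmless, but it is worth being aware that you are implicitly using the open immersion to identify contractions in the two settings.
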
 
\begin{proof} Let $B_{\mathrm{tf}}$ be the maximal $p$-torsion free quotient of $B$. The map $\pp\mapsto \pp \cap  B_{\mathrm{tf}}$ induces a bijection between the minimal primes of $B[1/p]$ and 
$B_{\mathrm{tf}}$ and we will identify both sets. Moreover, every minimal prime of $B_{\mathrm{tf}}$ 
is also a minimal prime of $B$: if $\pp$ is a minimal prime of $B_{\mathrm{tf}}$ then the 
quotient $B_{\mathrm{tf}}/\pp$ is $p$-torsion free, as $p$ is regular on $B_{\mathrm{tf}}$.  If $\qq$ is a minimal prime of $B$ then either $B/\qq$ is $p$-torsion free
in which case $\qq$ is a minimal prime of $B_{\mathrm{tf}}$ or $p$ is zero in $B/\qq$ in which case 
$B/\qq$ cannot have $B_{\mathrm{tf}}/\pp$ as a quotient. Thus it is enough to study the question after inverting $p$. 

Let $\mathfrak b$ be an ideal of $B[1/p]$ such that $V(\mathfrak b)$ is the closure of $\Sigma'$ and 
let $\mathfrak a= \mathfrak b \cap A[1/p]$. Then $V(\mathfrak a)$ is equal to the closure of $\Sigma$. By assumption we may write 
$V(\mathfrak b)= V(\qq_1)\cup\ldots \cup V(\qq_r)$, where $\qq_i$ are minimal prime ideals of $B[1/p]$. We may assume that 
$\mathfrak b = \qq_1\cap \ldots \cap \qq_r$. Since $B$ is $A$-flat the going down theorem implies that $\pp_i:=\qq_i \cap A[1/p]$ is a minimal prime 
of $A[1/p]$. Since  $\mathfrak a= \mathfrak b \cap A[1/p]= \pp_1\cap \ldots \cap \pp_r$ we have $V(\mathfrak a)= V(\pp_1)\cup \ldots \cup V(\pp_r)$.
\end{proof}

\begin{cor}\label{local} Let $\mathfrak a$ be the $R_{\infty}$-annihilator of $M_{\infty}$. If $V(\mathfrak a)$ is equal to a union of irreducible components of $\Spec R_{\infty}$,
then the Zariski closure of $\Sigma(\underline{k}) \cap \Sigma_{sc}$ (resp. $\Sigma(\underline{k}) \cap \Sigma_{ps}$) in $\Spec R^\square$
contains a non-empty
union of irreducible components of $\Spec R^\square$.
\end{cor}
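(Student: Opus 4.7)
The plan is to deduce the corollary from Theorem~\ref{capture_local} by the same flat-descent argument that underlies Lemma~\ref{going_down}. I would treat the supercuspidal case; the principal series case is identical. The key ingredients are (a) the Zariski density of $\Sigma(\underline{k}) \cap \Sigma_{sc} \cap V(\mathfrak a)$ in $V(\mathfrak a)$ supplied by Theorem~\ref{capture_local}, (b) the flatness of $R_\infty$ over $R^\square$, which follows from the isomorphism $R_\infty \cong R^\square \wtimes_\OO A$ with $A$ being $\OO$-flat, and (c) the hypothesis that $V(\mathfrak a)$ is a union of irreducible components of $\Spec R_\infty$.

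First I would check that any $x \in \mSpec R_\infty[1/p]$ with $\rho_x \in \Sigma(\underline{k}) \cap \Sigma_{sc}$ maps, under $\Spec R_\infty \to \Spec R^\square$, to a closed point of $\mSpec R^\square[1/p]$ which itself belongs to $\Sigma := \Sigma(\underline{k}) \cap \Sigma_{sc}$. Indeed, the image of $R^\square$ in the finite $L$-algebra $\kappa(x)$ is a finite-dimensional $L$-subalgebra; as an integral domain of finite $L$-dimension it is a field, so $\mathfrak m_x \cap R^\square$ is a maximal ideal of $R^\square[1/p]$, and the corresponding point of $\mSpec R^\square[1/p]$ has Galois representation $\rho_x$ and therefore lies in $\Sigma$.

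Next, I would let $I \subset R^\square$ be the ideal defining the Zariski closure $\overline{\Sigma}$ of $\Sigma$ and mimic the proof of Lemma~\ref{going_down}. The previous step shows $V(IR_\infty) \supseteq \Sigma(\underline{k}) \cap \Sigma_{sc} \cap V(\mathfrak a)$, which by Theorem~\ref{capture_local} is dense in $V(\mathfrak a)$; hence $V(IR_\infty) \supseteq V(\mathfrak a)$. Writing the latter as $V(\mathfrak q_1) \cup \cdots \cup V(\mathfrak q_r)$ with the $\mathfrak q_i$ minimal primes of $R_\infty$ (after passing, if need be, to the maximal $p$-torsion-free quotient, exactly as in the proof of Lemma~\ref{going_down}), we obtain $I R_\infty \subseteq \mathfrak q_i$, and so $I \subseteq \mathfrak p_i := \mathfrak q_i \cap R^\square$ for each $i$. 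Going-down applied to the flat map $R^\square \to R_\infty$ then forces each $\mathfrak p_i$ to be a minimal prime of $R^\square$, so $V(\mathfrak p_1) \cup \cdots \cup V(\mathfrak p_r) \subseteq \overline{\Sigma}$ is a non-empty union of irreducible components of $\Spec R^\square$ contained in the closure of $\Sigma$.

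The only technical subtlety is ensuring the flatness of $R_\infty$ over $R^\square$, which reduces to the $\OO$-flatness of $A$ in $R_\infty \cong R^\square \wtimes_\OO A$ (concretely, if $A = \OO\br{y_1,\ldots,y_m}/J$ with the quotient $\OO$-flat, then $R_\infty = R^\square\br{y_1,\ldots,y_m}/J R^\square\br{y_1,\ldots,y_m}$ is flat over $R^\square$). Beyond this, the argument is purely formal: it is a minor variant of Lemma~\ref{going_down} adapted to the weaker input that the closure of our set in $\Spec R_\infty$ only contains, rather than equals, the union of irreducible components $V(\mathfrak a)$, and to the corresponding weaker conclusion about $\Spec R^\square$.
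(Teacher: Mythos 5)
Your proof follows essentially the same route as the paper's: invoke Theorem~\ref{capture_local}, the flatness of $R_\infty$ over $R^\square$ coming from $R_\infty\cong R^{\square}\wtimes_{\OO}A$ with $A$ being $\OO$-flat, and then the going-down argument of Lemma~\ref{going_down}. The one thing you do more explicitly than the paper is observe that Lemma~\ref{going_down} as literally stated does not apply (the closure of the preimage of $\Sigma(\underline{k})\cap\Sigma_{sc}$ in $\Spec R_\infty$ merely \emph{contains}, rather than \emph{equals}, the union of components $V(\mathfrak a)$), and that it is the argument of the lemma, run on the minimal primes defining $V(\mathfrak a)$ alone, that is needed; the paper's phrase ``Given Lemma~\ref{going_down}'' tacitly means exactly this adaptation, and the conclusion of the corollary (``contains'' rather than ``equals'') is weakened correspondingly. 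Your extra step verifying that a point of $\mSpec R_\infty[1/p]$ in $\Sigma(\underline{k})\cap\Sigma_{sc}$ contracts to a point of $\mSpec R^\square[1/p]$ lying in the analogously-defined set is correct (it also follows immediately from both rings being Jacobson) and is left implicit in the paper. In short: same approach, with some helpful bookkeeping made explicit.
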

\begin{proof} Given Lemma \ref{going_down} and Theorem \ref{capture_local}, it is enough to verify that $R_{\infty}$ is flat over $R^{\square}$. Now 
$R_{\infty}$ is formally smooth over the ring denoted by $R^{\loc}$ in \cite[\S 2.6]{six-authors}, and $R^{\loc}= R^{\square} \wtimes_{\OO} A$, where 
$A$ is a completed tensor product over $\OO$ of some deformation rings at a finite number of places away from $\pp$. All them are $\OO$-torsion free, see
\cite[\S 2.4]{six-authors},  thus $A$ is $\OO$-torsion free and hence $\OO$-flat, which implies that $R^{\loc}$ is flat over $R^{\square}$. 
\end{proof}

The condition on $V(\mathfrak a)$ in Corollary \ref{local} will be verified in the next section. 

\section{The support of $M_{\infty}$}\label{support}
We begin with a definition.

\begin{defi}\label{define_support} If $A$ is a complete Noetherian
local $\cO$-algebra,
and $M$ is a finitely generated $A\br{K}$-module,
then we define the {\em support} of $M$ in $\Spec A$, denoted
$\supp_A M$, as follows:
$\supp_A M:= V(\mathfrak a),$
where $\mathfrak a$ is the $A$-annihilator of $M$.
\end{defi}

Since a finitely generated $A\br{K}$-module is typically not finitely generated
as an $A$-module, it is not immediately clear that defining its support
in the above manner is sensible.  However, the following lemma gives
us assurance that the preceding definition does in fact give a reasonable
notion of support.

\begin{lem}\label{all_finite} If  $m_1, \ldots, m_r$ are generators of $M$
as an $A\br{K}$-module, and $M'$ is the $A$-submodule 
of $M$ generated by $m_1, \ldots, m_r$,
then $\supp M'=\supp M$. 
\end{lem}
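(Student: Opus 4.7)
The plan is to prove the stronger statement that the two $A$-annihilators actually coincide, from which the equality of supports (which depend only on the radical of the annihilator) follows immediately. Let $\mathfrak a = \ann_A(M)$ and $\mathfrak a' = \ann_A(M')$.

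The inclusion $\mathfrak a \subseteq \mathfrak a'$ is trivial since $M' \subseteq M$, and this gives $\supp M' \subseteq \supp M$. The content of the lemma is the reverse inclusion. For this, I would exploit the fact that $A$ sits as a central subring of $A\br{K}$ (since $A\br{K}$ is by construction an $A$-algebra with $A$ acting through scalars, commuting with the $K$-action). Suppose $a \in \mathfrak a'$, so $a m_i = 0$ for $i = 1,\dots,r$. Any $m \in M$ can be written as $m = \sum_{i=1}^r x_i m_i$ with $x_i \in A\br{K}$, because $m_1,\dots,m_r$ generate $M$ over $A\br{K}$. Then, using centrality,
\begin{equation*}
a m \;=\; \sum_{i=1}^r a x_i m_i \;=\; \sum_{i=1}^r x_i (a m_i) \;=\; 0,
\end{equation*}
so $a \in \mathfrak a$. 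Thus $\mathfrak a = \mathfrak a'$, and consequently $\supp M = V(\mathfrak a) = V(\mathfrak a') = \supp M'$.

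There is no real obstacle here; the only subtle point is that one has to be sure that $A$ really does commute with all of $A\br{K}$, which is built into the definition of the completed group algebra as an $A$-algebra (with $A$ central and $K$ providing a continuous group of units). Once this is observed, the argument is a one-line computation and does not require the finite-generation of $M$ over $A\br{K}$ beyond the existence of the generators $m_1,\dots,m_r$; the number $r$ being finite is inessential, though it matches the hypothesis.
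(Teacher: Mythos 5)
Your proof is correct and follows essentially the same route as the paper: both show the annihilators $\mathfrak a$ and $\mathfrak a'$ actually coincide by observing that $A$ is central in $A\br{K}$, so that $\mathfrak a'$ kills $A\br{K}M' = M$. The paper's proof is just a more compressed phrasing of your computation.
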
 
\begin{proof} Let $\mathfrak a$ be the $A$-annihilator of $M$ and let $\mathfrak a'$ be the $A$-annihilator of $M'$. Since $M'\subset M$ 
we have $\mathfrak  a\subset \mathfrak a'$.
On the other hand, since the actions of $A$ and $K$ on $M$ commute,
we see that $\mathfrak a'$ annihilates $A\br{K}M' = M.$
\end{proof}  

The main goal of this section is to prove the following theorem.

\begin{theorem}
	\label{thm:support}
       	The support 
of $M_{\infty}$ in $\Spec R_{\infty}$ is equal to
a union of irreducible components of $\Spec R_{\infty}$.  
\end{theorem}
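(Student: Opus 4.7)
The plan is to deduce Theorem~\ref{thm:support} from its rigid analytic analogue, Theorem~\ref{C}. First I would pass to the rigid generic fibre: since $M_\infty$ is $\OO\br K$-projective it is $\OO$-flat, and hence $R_\infty/\ann(M_\infty)$ is $\OO$-flat; this, combined with the $\OO$-flatness of $R_\infty$ itself (which follows from its description as a power series ring over the $\OO$-flat ring $R^{\loc}$), places the support of $M_\infty$ and the irreducible components of the ambient formal scheme in bijective correspondence with the analogous objects on the rigid side. Any statement about the support being a union of irreducible components therefore transfers between $\Spec R_\infty$ and $(\Spf R_\infty)^{\rig}$ without loss, and it is enough to prove Theorem~\ref{C}.

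To prove the rigid statement I would deploy the patched eigenvariety $X_p(\rhobar) \subseteq (\Spf R_\infty)^{\rig} \times \widehat T$ constructed by Breuil--Hellmann--Schraen in \cite{BHS}, where $\widehat T$ is the rigid character variety of the diagonal torus $T \subseteq \GL_n(F)$. By definition $X_p(\rhobar)$ is the support of the coherent sheaf on $(\Spf R_\infty)^{\rig} \times \widehat T$ obtained by applying the locally analytic Jacquet module functor $J_B$ of \cite{Jacquet-one} to $\Pi_\infty^{\la}$, where $\Pi_\infty := \Hom^{\cont}_{\OO}(M_\infty,L)$. The first projection $\pr_1 \colon X_p(\rhobar) \to (\Spf R_\infty)^{\rig}$ factors through the rigid support of $M_\infty$ by construction, and the key technical input from \cite{BHS} is that $X_p(\rhobar)$ is equidimensional of dimension equal to $\dim (\Spf R_\infty)^{\rig}$ and that the locus of classical crystalline points is Zariski dense in $X_p(\rhobar)$ and forms an accumulation subset.

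The argument then runs in two stages. First, because $M_\infty$ is built by patching the completed homology of definite unitary Shimura sets and inherits a rich supply of classical automorphic points from its finite level incarnations, every irreducible component of the rigid support of $M_\infty$ contains at least one classical crystalline point; such a point lifts to $X_p(\rhobar)$ via a choice of refinement, and the accumulation property then guarantees that classical crystalline points are Zariski dense in a neighbourhood of the lift inside $X_p(\rhobar)$. Second, the equidimensionality means that the image under $\pr_1$ of such a neighbourhood is of maximal dimension inside $(\Spf R_\infty)^{\rig}$; as a closed irreducible subset of maximal dimension inside an irreducible rigid space is a full irreducible component, it follows that the Zariski closure of the set of crystalline points in $\supp M_\infty$ picks out a union of full irreducible components of $(\Spf R_\infty)^{\rig}$, which is the content of Theorem~\ref{C}.

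The hard part is the dimension estimate for $X_p(\rhobar)$, together with the Zariski density of classical crystalline points in it. The upper bound, that each component has dimension at most $\dim (\Spf R_\infty)^{\rig}$, follows relatively formally from the coherence of the Jacquet module sheaf. The matching lower bound on each component, which is the modern incarnation of the Chenevier--Nakamura infinite fern, requires a careful analysis of the local trianguline deformation ring and the rank of the crystalline periods at generic crystalline points; this is precisely where \cite{BHS} does the heavy lifting, and I would import their dimension computation and their Zariski density statement as black boxes. Once these inputs are granted, the rest of the argument is a rigid analytic exercise, and the passage from Theorem~\ref{C} back to Theorem~\ref{thm:support} reduces to the flatness observations of the first paragraph.
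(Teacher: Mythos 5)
Your passage from $\Spec R_\infty$ to the rigid generic fibre via $\cO$-flatness is fine and mirrors the paper's reduction. But the central step of your argument contains a genuine error. You assert that $X_p(\rhobar)$ (what the paper calls $X_\infty$) is equidimensional of dimension equal to $\dim(\Spf R_\infty)^{\rig}$, and you want to conclude that $\pr_1$ of a neighbourhood has maximal dimension. This dimension equality is false: the trianguline condition is genuinely restrictive, and $X^{\tri}_\infty$ (hence $X_\infty$) has dimension strictly smaller than $(\Spf R_\infty)^{\rig}$ -- the deficit is roughly $[F:\Qp]\,n(n-1)/2$. If the dimensions were equal, then at a smooth (benign) point $(z,\delta)$ the injective tangent map $T_{X^{\tri}_\infty,(z,\delta)}\hookrightarrow T_{(\Spf R_\infty)^{\rig},z}$ would already be an isomorphism, and the infinite-fern ingredient (which you yourself invoke) would be vacuous. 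The actual mechanism is Theorem~\ref{thm:infinite fern}(2): each individual tangent space $T_{X^{\tri}_\infty,(z,\delta)}$ is a \emph{proper} subspace of $T_{(\Spf R_\infty)^{\rig},z}$, and one needs the span over \emph{all} the $n!$ refinements $\delta$ of the benign point $z$ to fill out the full tangent space. This is a tangent-space spanning argument, not a dimension count, and it is precisely what distinguishes the ``fern'' from a single stem. Your dimension-counting route therefore cannot be made to work even after correcting the dimension.

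There is a second, independent gap at the start: you need to know that every irreducible component of the rigid support of $M_\infty$ contains a benign crystalline point in the support of some $M_\infty(\xi)$, and you dispose of this with an informal appeal to ``a rich supply of classical automorphic points'' inherited from the patching construction. In the paper this is a non-trivial step established via capture: Proposition~\ref{capture_algebraic} (the algebraic representations capture projectives in $\Mod^{\pro}_K(\cO)$), applied to the projective $\cO\br{K}$-module $M_\infty$, shows that $\supp M_\infty$ is exactly the Zariski closure of $\bigcup_\xi \supp M_\infty(\xi)$; then since each $\supp M_\infty(\xi)$ is a union of components of $(\Spf R^{\underline{k}}_\infty)^{\rig}$, Proposition~\ref{prop:benign} supplies a Zariski-dense set of benign points on each such component. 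Without this capture input (or some equivalent, such as the Scholze-style argument of section~\ref{sec_scholze}), you have no control on how the closure of the classical locus sits inside $\supp M_\infty$, and the rest of the argument has nothing to bite on. A third, smaller issue is that the image of a closed set under $\pr_1$ is constructible but not \emph{a priori} closed, so even granting your dimension claim, the phrase ``closed irreducible subset of maximal dimension'' requires justification.
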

\begin{proof}
	It follows from Proposition~\ref{capture_algebraic}
	that, if $\xi$ runs over all the algebraic representations
	of $G$, then the support of $M_{\infty}$ is
	equal to the Zariski closure of the unions of the supports
	of the various modules $M_{\infty}(\xi)$.
	(See \cite{six-authors} for a discussion of these patched
	modules.)
	The support of each $M_{\infty}(\xi)$ is reduced
	\cite[Lem.\,4.17]{six-authors},
	and so it follows from Theorem~\ref{rigid support}
	below that the support of $M_\infty[1/p]$ in $\Spec R_{\infty}[1/p]$ is equal to 
	a union of its irreducible components. Since $M_{\infty}$ is $p$-torsion free 
	by construction, the support of $M_{\infty}$ is equal to  a union of  irreducible components
	of $\Spec R_{\infty, \mathrm{tf}}$, where $\Spec R_{\infty, \mathrm{tf}}$ denotes the maximal 
	$p$-torsion free quotient of $R_{\infty}$. As explained in the proof of Lemma \ref{going_down} 
	these are also irreducible components of $\Spec R_{\infty}$.
\end{proof}

\begin{remark}  Typically $R^{\square}$ 
 is a formally smooth $\cO$-algebra and thus irreducible. 
In such cases, our theorem together with Corollary \ref{local}  implies
that $M_{\infty}$ is supported on all of  $\Spf R^{\square}$.  We expect that $M_{\infty}$ is in fact
always supported on all of $\Spf R^{\square}$, but it seems {\em a priori}
possible that this is not the case, when $\Spf R^{\square}$ 
has more than one irreducible component.  
\end{remark}

\begin{theorem}
	\label{rigid support}
	The Zariski closure in $(\Spf R_{\infty})^{\rig}$
of the set of crystalline points lying in the support of $M_{\infty}$ is 
equal to
a union of irreducible components of $(\Spf R_{\infty})^{\rig}$.  
\end{theorem}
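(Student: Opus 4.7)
The plan is to adapt the Chenevier--Nakamura infinite fern argument, replacing the generic fibre of $R^{\square}$ by the patched eigenvariety $X_p(\rhobar)$ of Breuil--Hellmann--Schraen \cite{BHS}. This eigenvariety is a closed rigid analytic subvariety of $(\Spf R_\infty)^{\rig} \times \widehat{T}$, where $\widehat{T}$ is the rigid space of locally analytic characters of the diagonal torus $T \subset \GL_n(F)$, parametrizing the eigensystems occurring in the locally analytic Jacquet module $J_B(\Pi_\infty^{\la})$ of the unitary Banach representation $\Pi_\infty := \Hom_\OO^{\cont}(M_\infty, L)$. The canonical projection $\omega\colon X_p(\rhobar) \to (\Spf R_\infty)^{\rig}$ lands in $\supp M_\infty$ by construction.

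First I would assemble the results from \cite{BHS} that will be used: $X_p(\rhobar)$ is equidimensional; strict crystalline points (pairs of a crystalline representation in $\supp M_\infty$ together with a refinement of its crystalline Frobenius eigenvalues) are Zariski dense in $X_p(\rhobar)$; and at every non-critical strict crystalline point $X_p(\rhobar)$ is smooth, with $\omega$ unramified there and with image of maximal local dimension.

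The infinite fern would then be run as follows. Starting from any regular crystalline point $x \in \supp M_\infty$, each of its $n!$ refinements lifts $x$ to a strict crystalline point on $X_p(\rhobar)$; near each non-critical lift, the local trianguline deformation space produces a positive-dimensional family on which further regular crystalline points of varying Hodge--Tate weight are Zariski dense. Iterating by combining the $n!$ refinements available at each newly produced classical point is the infinite fern of Chenevier and Nakamura, now executed inside $X_p(\rhobar)$: its output is that the regular crystalline strict points whose underlying Galois representations lie in $\supp M_\infty$ are Zariski dense in every irreducible component of $X_p(\rhobar)$. Pushing forward by $\omega$ and using that at non-critical crystalline points $\omega$ has image of maximal local dimension, each irreducible component of $X_p(\rhobar)$ projects onto a Zariski dense subset of an irreducible component of $(\Spf R_\infty)^{\rig}$.

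The main obstacle will be to ensure that every irreducible component of $\supp M_\infty$ is reached in this way, i.e.\ that each such component contains at least one regular crystalline starting point of $M_\infty$ from which the fern can be launched. This is where the standing hypothesis that $\rhobar$ admits a potentially diagonalisable potentially crystalline lift of regular weight, together with the local--global compatibility built into the construction of $M_\infty$ in \cite{six-authors}, would be decisive: these together guarantee enough classical automorphic input so that every component of $\supp M_\infty$ meets the locus at which the fern unfolds. Granted this, taking the union over all components of $X_p(\rhobar)$ yields that the Zariski closure of the crystalline points in $\supp M_\infty$ is a union of irreducible components of $(\Spf R_\infty)^{\rig}$, as required.
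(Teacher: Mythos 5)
Your general setup is the right one: the patched eigenvariety $X_\infty$ of Breuil--Hellmann--Schraen is indeed the tool, and its projection $\omega$ to $(\Spf R_\infty)^{\rig}$ lands in the support of $M_\infty$. But the crucial step ``each irreducible component of $X_p(\rhobar)$ projects onto a Zariski dense subset of an irreducible component of $(\Spf R_\infty)^{\rig}$'' is false for dimension reasons: the patched eigenvariety (and $X^{\tri}_\infty$) has strictly smaller dimension than $(\Spf R_\infty)^{\rig}$, so $\omega$ restricted to a single component cannot be dominant, no matter how smooth or unramified $\omega$ is there. The phrase ``image of maximal local dimension'' cannot mean the ambient dimension. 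What the ``infinite fern'' really buys you is not that one branch fills out the deformation space, but that the \emph{union} of tangent directions coming from the $n!$ distinct refinements $(z,\delta)$ over one benign crystalline point $z$ spans the full tangent space $T_{(\Spf R_\infty)^{\rig},z}$; this is Chenevier/Nakamura's tangent space computation, stated as Theorem~\ref{thm:infinite fern}~(2) in this paper.

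Concretely, the argument the paper runs is the following. Let $Z$ be the Zariski closure in question. One shows $\pi(X_\infty)\subseteq Z$ (using the classicality result \cite[Thm.\,3.19]{BHS} --- this is what makes classical points dense in $X_\infty$, it is not an output of the fern), and one shows that in every irreducible component of $Z$ there is a benign point $z$ in the smooth locus of $Z$ lying in the support of some $M_\infty(\xi)$. Lemma~\ref{lem:refinements} then shows that \emph{all} points $(z,\delta)$ of $X^{\tri}_\infty$ over $z$ already lie in $X_\infty$, and Theorems~\ref{thm:BHS main} and~\ref{thm:infinite fern}~(1) show $X_\infty$ and $X^{\tri}_\infty$ coincide and are smooth at each such $(z,\delta)$. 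Since $\pi(X_\infty)\subseteq Z$, the tangent space $T_{Z,z}$ contains the image of each $T_{X^{\tri}_\infty,(z,\delta)}$, and by Theorem~\ref{thm:infinite fern}~(2) these span $T_{(\Spf R_\infty)^{\rig},z}$; hence $Z$ and $(\Spf R_\infty)^{\rig}$ agree locally at $z$, so the component of $Z$ through $z$ is a component of $(\Spf R_\infty)^{\rig}$. Your ``main obstacle'' --- that every component of $\supp M_\infty$ contains a crystalline starting point --- is not the real difficulty (and is not even what Theorem~\ref{rigid support} asserts; that comparison is handled separately in Theorem~\ref{thm:support} via capture). The step you are actually missing is the tangent-space spanning argument combining all refinements at one point.
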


We follow the approach of Chenevier \cite{Chenevier}
and Nakamura \cite{Nakamura}, who proved (under various technical
hypotheses) that
the crystalline points are Zariski dense in the rigid analytic generic
fibre $(\Spf R^{\square})^{\rig}$
of $\Spf R^{\square}$. 

The arguments of Chenevier and Nakamura use ``infinite fern''-style
techniques.  In order to modify these arguments so as to be sensitive
to the support of $M_{\infty}$, we use the locally analytic Jacquet
module of \cite{Jacquet-one}, which provides a representation-theoretic
approach to the study of finite slope families.   Breuil, Hellmann,
and Schraen \cite{BHS} have made a careful study of the theory
of the Jacquet module as it applies to $M_{\infty}$, and we will
use their results.\footnote{In fact, they work with a slightly
different version  of $M_{\infty}$ to us (which is literally
the construction of \cite{six-authors}), but all their arguments 
and results apply equally well in our setting, and we will cite
and apply them without further discussion of this issue.}

Following the notation of \cite{BHS},
we write $\Pi_{\infty} := \Hom^{\cont}_{\cO}
(M_{\infty}, L);$ then $\Pi_{\infty}$ is an
$R_{\infty}$-admissible Banach  space representation of $G$.
We may pass to its $R_{\infty}$-locally analytic vectors, see \cite[\S 3.1]{BHS},
and then form the locally analytic Jacquet module
$J_B(\Pi_{\infty}^{R_{\infty}-\ana})$. 
The continuous dual 
$J_B(\Pi_{\infty}^{R_{\infty}-\ana})'$
of  $J_B(\Pi_{\infty}^{R_{\infty}-\ana})$
may naturally be 
regarded as the space of global sections of a coherent sheaf
$\cM_{\infty}$
on the rigid analytic 
space $(\Spf R_{\infty})^{\rig} \times \That$.
We let $X_{\infty}$ denote the support of $\cM_{\infty}$; 
then $X_{\infty}$ is a 
Zariski closed rigid analytic subspace of
$ (\Spf R_{\infty})^{\rig}\times \That$. If  $x$ is a closed point of $\Spec R_{\infty}[1/p]$ and 
 $\mm_x$ is the corresponding maximal ideal,
then $\Pi(x)$ is a closed subspace of $\Pi_{\infty}$ consisting of elements in $\Pi_{\infty}$, killed 
by $\mm_x$.

We recall again from \cite{BHS}, see their Definition~2.4,
that there is another important
Zariski closed rigid analytic subspace
$$X^{\tri}_{\infty} \hookrightarrow 
(\Spf R_{\infty})^{\rig}\times \That,$$
which by definition is the Zariski closure of the 
set of points
$$(x,\delta) \in 
(\Spf R_{\infty})^{\rig}\times \That^{\reg},$$
where $\delta$ is the parameter of a triangulation of the Galois
representation $r_x$, see \cite[\S 2.2]{BHS}.
 
We then have the following key result of \cite{BHS}.

\begin{theorem}[{\cite[Thm.\,3.21]{BHS}}]
\label{thm:BHS main}
There is an inclusion
$X_{\infty} \subset X^{\tri}_{\infty}$, 
and $X_{\infty}$ is furthermore a union of irreducible
components of $X^{\tri}_{\infty}$.
\end{theorem}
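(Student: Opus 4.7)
The plan is to establish the two assertions in sequence: first the inclusion $X_{\infty}\subset X^{\tri}_{\infty}$ via a density argument using classical points, and then the statement about irreducible components via a local dimension and tangent-space comparison. The core idea throughout is that the Jacquet module $J_B(\Pi_{\infty}^{R_{\infty}-\ana})$ sees companion characters coming from refinements, and that these refinements globalize to triangulations of the underlying $(\varphi,\Gamma)$-module.

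For the inclusion, I would first exhibit a dense set of ``very classical'' points on $X_{\infty}$. Concretely, take $(x,\delta)\in (\Spf R_{\infty})^{\rig}\times\That$ such that the specialization $r_x$ of the universal Galois representation is crystalline of regular Hodge--Tate weight with pairwise distinct Frobenius eigenvalues, and such that $\delta$ encodes a non-critical refinement of $r_x$. Emerton's adjunction between the Jacquet module and locally analytic parabolic induction, combined with a classicality criterion, shows that such $(x,\delta)$ lie in $X_{\infty}$ whenever the classical automorphic vector at weight $\xi$ contributing to $M_{\infty}(\xi)$ has the refinement $\delta$; conversely, by Colmez--Kedlaya, the character $\delta$ parametrizes a triangulation of $r_x$, so $(x,\delta)\in X^{\tri}_{\infty}$ by definition. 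Density of this set of points in $X_{\infty}$ is obtained from Proposition~\ref{capture_algebraic} (so that the various $M_{\infty}(\xi)$, as $\xi$ runs over algebraic weights, together control $M_{\infty}$), together with density of non-critical refined crystalline points inside each fixed-weight locus of the support of $M_\infty$. Taking Zariski closures inside $X^{\tri}_{\infty}$ then yields $X_{\infty}\subset X^{\tri}_{\infty}$.

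For the second, harder assertion, I would argue locally at a generic point of each irreducible component of $X_{\infty}$. Choose a very classical point $(x,\delta)\in X_{\infty}$ as above, lying in the smooth locus of $X^{\tri}_{\infty}$ (smoothness of $X^{\tri}_{\infty}$ at such points is a known consequence of Bellaïche--Chenevier style computations in trianguline deformation theory). At such a point one computes the tangent space to $X^{\tri}_{\infty}$ as a direct sum of a framed deformation contribution and a refinement deformation contribution, and matches this against the completed local ring of $X_{\infty}$. The key identity to verify is that the tangent space to $X_{\infty}$ at $(x,\delta)$ already fills out the tangent space to $X^{\tri}_{\infty}$; this comes from interpreting first-order Jacquet-module-valued deformations of the classical vector in terms of trianguline deformations of $r_x$. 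Because $X^{\tri}_{\infty}$ is smooth at $(x,\delta)$ and $X_{\infty}$ is Zariski-closed in $X^{\tri}_{\infty}$ with the same tangent space, $X_{\infty}$ must contain a Zariski-open neighborhood of $(x,\delta)$ in $X^{\tri}_{\infty}$. By density of such points in every irreducible component of $X_{\infty}$, each such component coincides with an irreducible component of $X^{\tri}_{\infty}$.

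The main obstacle is the tangent-space matching at very classical non-critical points. It requires a careful analysis of the locally analytic Jacquet module of $\Pi_{\infty}^{R_{\infty}-\ana}$, using Colmez's $(\varphi,\Gamma)$-module dictionary to translate the analytic infinitesimal deformations of $\delta$-eigenvectors into infinitesimal deformations of the triangulation of $r_x$, and verifying that no deformation directions are lost in the passage from $M_\infty$ to $\mathcal{M}_\infty$. The smoothness and dimension formula for $X^{\tri}_{\infty}$ at non-critical regular points is a purely Galois-theoretic input and can be separated from the automorphic side, but aligning the two computations compatibly is the technical heart of the proof.
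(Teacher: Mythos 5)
The paper does not actually prove this statement: it is imported verbatim from \cite[Thm.\,3.21]{BHS} (``We then have the following key result of \cite{BHS}''), so any proof has to reproduce the Breuil--Hellmann--Schraen argument. Your first half is, in outline, their route to the inclusion: exhibit a Zariski-dense set of classical points $(x,\delta)$ of $X_\infty$ at which $\delta$ is the parameter of a (non-critical, regular) triangulation of $r_x$, and use that $X^{\tri}_\infty$ is Zariski closed. One correction, though: the density of such points in $X_\infty$ does not follow from Proposition~\ref{capture_algebraic}. Capture gives Zariski density of the supports of the $M_\infty(\xi)$ in $\Spec R_\infty$, which is a statement in the Galois deformation space, not in the eigenvariety $X_\infty\subset (\Spf R_\infty)^{\rig}\times\That$; density of (strongly) classical points in $X_\infty$ is an eigenvariety-theoretic statement, proved in \cite[Thm.\,3.19]{BHS} via Emerton's classicality criterion for the Jacquet module and an accumulation argument over weight space.

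The second half contains a genuine gap. From ``$X^{\tri}_\infty$ is smooth at $(x,\delta)$ and $X_\infty$ is Zariski closed in $X^{\tri}_\infty$ with the same tangent space at $(x,\delta)$'' you cannot conclude that $X_\infty$ contains a neighbourhood of $(x,\delta)$: a cuspidal curve inside a smooth surface has full two-dimensional tangent space at the cusp but is a proper closed subvariety. To salvage the inference you would also need $X_\infty$ to be smooth (or at least of full dimension) at $(x,\delta)$ --- but in the paper's logic precisely these facts (smoothness of $X_\infty$ and the local isomorphism $X_\infty\cong X^{\tri}_\infty$ at benign points) are \emph{deduced from} Theorem~\ref{thm:BHS main} together with Theorem~\ref{thm:infinite fern}, so your ``key identity'' that $T_{X_\infty,(x,\delta)}$ fills out $T_{X^{\tri}_\infty,(x,\delta)}$ is essentially as strong as the theorem itself, and the proposed derivation (matching first-order deformations of Jacquet eigenvectors with trianguline deformations) is exactly the unproved technical heart. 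The actual proof in \cite{BHS} avoids tangent spaces altogether: using that $M_\infty$ is projective over $S_\infty\br{K}$ and the construction of the patched eigenvariety as a finite cover of pieces of weight space times a Fredholm hypersurface, they show $X_\infty$ has no embedded components and is equidimensional of the same dimension as $X^{\tri}_\infty$; an irreducible closed subspace of full dimension inside the equidimensional space $X^{\tri}_\infty$ is automatically an irreducible component, which gives the second assertion once the inclusion is known.
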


We also need to recall the key point of the infinite fern argument
of \cite{Chenevier} and \cite{Nakamura}, in a form that
is well-suited to our argument.

\begin{df} If $\underline{k}$ is a system of
labelled Hodge--Tate weights, 
then we let $R^{\underline{k}}_{\infty}$ denote the reduced and $\cO$-flat
quotient of $R_{\infty}$ characterized by the property that a 
point $x \in (\Spf R_{\infty})^{\rig}$ lies in
$(\Spf R^{\underline{k}}_{\infty})^{\rig}$ if and only if 
$r_x$ is crystalline with labelled Hodge--Tate weights given
by~$\underline{k}$.
\end{df}

Let $f:=[F_0: \Qp]$ be the degree of the maximal unramified subextension of $F$. 
\begin{defi}\label{defi:benign}
We say that a crystalline representation $r$ of $G_F$ with regular Hodge--Tate weights $\underline{k}$ 
is \textit{benign} if the following hold: 
\begin{itemize} 
\item[(i)] the eigenvalues of the linearization of the crystalline Frobenius $\varphi^f$ of $r$ are pairwise distinct;
\item[(ii)] all the refinements of $r$ are non-critical;
\item[(iii)] the ratio of any two eigenvalues in (i) is not equal to $p^{\pm f}$.
\end{itemize} 
\end{defi}
We refer the reader to \cite[\S 2.3]{BHS} for the terminology used in (i) and (ii). Our definition coincides
with the definition of benign in \cite[Def.\,2.8]{Nakamura}. It is more restrictive than \cite[Def.\,2.8]{BHS}, 
since we additionally impose the condition (iii). Let $\WD(r)$ be the Weil-Deligne representation associated to $r$. The 
condition (iii) implies that the smooth representation $\pi_p$  of $\GL_n(F)$ associated to $\WD(r)$ via the classical local
Langlands correspondence is an \textit{irreducible} unramified principal series representation.

\begin{prop}
\label{prop:benign}
If $\underline{k}$ is any regular system of labelled Hodge--Tate weights,
and $C$ is any irreducible component of
$(\Spf R_{\infty}^{\underline{k}})^{\rig}$,
then the set of points $x \in C$ for which $r_x$ is benign
forms a non-empty Zariski open subset
of $(\Spf R_{\infty}^{\underline{k}})^{\rig}$.
\end{prop}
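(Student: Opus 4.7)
The plan is to split the statement into two independent parts: (A) benignity is a Zariski-open condition on the whole of $(\Spf R_{\infty}^{\underline{k}})^{\rig}$, and (B) this open locus meets every irreducible component $C$. Since $C$ is reduced and irreducible, combining (A) and (B) will yield that the benign locus is open in $C$ and, being non-empty, automatically Zariski dense there.

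For (A), I would work with the universal crystalline representation of $G_F$ over $(\Spf R_{\infty}^{\underline{k}})^{\rig}$ and apply $D_{\mathrm{cris}}$, obtaining a locally free coherent sheaf $\mathcal D$ of rank $n[F_0:\Qp]$ equipped with an action of the linearized $f$-th power Frobenius $\varphi^f$ and a descending filtration indexed by each embedding $F\hookrightarrow \overline L$. Condition (i) of Definition~\ref{defi:benign} is the non-vanishing of the discriminant of the characteristic polynomial of $\varphi^f$ on $\mathcal D$, which is an analytic function on the base, and condition (iii) similarly is the non-vanishing of $\prod_{i\neq j}(\lambda_i - p^{f}\lambda_j)(\lambda_i - p^{-f}\lambda_j)$, realised via the action of $\varphi^f\otimes\varphi^f$ on $\mathcal D\otimes\mathcal D$; both are Zariski open. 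On the open locus where (i) holds, there are only finitely many refinements (parameterised by orderings of eigenvalues) and non-criticality of a given refinement is a transversality condition between the Hodge filtration and the $\varphi$-stable refinement filtration, which is again open; hence (ii) is open too.

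For (B), I would use that by Kisin's work on potentially semistable deformation rings, the local crystalline deformation ring $R^{\square,\underline{k}}$ with regular Hodge--Tate weights has formally smooth, equidimensional rigid generic fibre; in the patched setting this implies $(\Spf R_{\infty}^{\underline{k}})^{\rig}$ is itself regular and equidimensional, since $R_{\infty}$ is obtained from $R^{\loc}$ by adjoining formal variables. Therefore each irreducible component $C$ is smooth of positive dimension. The standard infinite-fern construction of Chenevier \cite{Chenevier} and Nakamura \cite{Nakamura} then produces a benign crystalline point inside any non-empty open chart of $C$: starting from any crystalline point $x\in C$, one deforms $r_x$ along tangent directions in $H^1_g$ to generically achieve distinct $\varphi^f$-eigenvalues, then wiggles further to avoid the countably many algebraic conditions $\lambda_i/\lambda_j=p^{\pm f}$ and the (finitely many) critical refinement loci. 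Each of these forbidden loci is Zariski closed of positive codimension in $C$, so their complement is non-empty on $C$.

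The main obstacle is (B): one has to know that the Chenevier--Nakamura construction can be carried out on \emph{every} irreducible component of $(\Spf R_{\infty}^{\underline{k}})^{\rig}$, not just on one distinguished component. This reduces to the smoothness and equidimensionality of Kisin's crystalline local deformation rings with regular weights, together with the observation that the patching variables contribute only formally smooth factors. In fact this density statement is exactly the content of \cite[Lem.\,2.10]{Nakamura} (and is used in the same spirit in \cite[Prop.\,2.11]{BHS} for a slightly different variant of $M_{\infty}$), and one may either cite it directly or reproduce its proof verbatim in our setting.
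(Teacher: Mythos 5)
Your proposal is in substance the same as the paper's: the proposition is a consequence of the Chenevier--Nakamura density result for benign points in the crystalline deformation ring of fixed regular weight, transported to $R_{\infty}^{\underline{k}}$ by using that the patching variables contribute only formally smooth factors. The paper itself just cites \cite[Lem.\,4.4]{Chenevier} and \cite[Lem.\,4.2]{Nakamura}; your elaboration into parts (A) (openness) and (B) (nonemptiness on each component) matches the internal logic of those references. A few points to tighten: (1) Your final citation is \cite[Lem.\,2.10]{Nakamura}, but the relevant lemma appears to be Lemma 4.2 of that paper (the one the authors actually cite); worth double-checking. (2) Calling part (B) ``the standard infinite-fern construction'' is a slight misnomer --- the infinite fern argument is what appears in Theorem~\ref{thm:infinite fern} and concerns moving among different weights and refinements via the trianguline variety, whereas the nonemptiness of the benign locus on each component of $(\Spf R^{\square,\underline{k}})^{\rig}$ is a more direct deformation argument within a fixed weight using Kisin's smoothness. (3) The forbidden-locus conditions $\lambda_i/\lambda_j = p^{\pm f}$ are finitely many, not countably many. (4) The claim ``each forbidden locus is Zariski closed of positive codimension in $C$'' is precisely what the Chenevier--Nakamura lemma establishes and cannot be assumed; as you write it, the sketch is circular there, but since you ultimately propose to cite that lemma the circularity is harmless.
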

\begin{proof}
This is a result
of Chenevier \cite[Lem.\,4.4]{Chenevier} in the case when $K = \Q_p$
and Nakamura \cite[Lem.\,4.2]{Nakamura} in the general case.
\end{proof}

\begin{theorem}
\label{thm:infinite fern}
Let $z$ be a benign crystalline point of $\Spec R_{\infty}$.
\begin{enumerate}
\item
If $(z,\delta)$ is a point of $X^{\tri}_{\infty}$ which 
lies over $z$, then $(z,\delta)$ is a smooth point
of $X^{\tri}_{\infty}$.
\item The tangent space
$T_{(\Spf R_{\infty})^{\rig}, z}$ is spanned by the
images of the tangent spaces $T_{X^{\tri}_{\infty}, (z,\delta)}$,
as $(z,\delta)$ runs over
all points of $X^{\tri}_{\infty}$ lying over $z$. 
\end{enumerate}
\end{theorem}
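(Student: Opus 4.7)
I follow the strategy of Chenevier \cite{Chenevier} and Nakamura \cite{Nakamura}, adapted to the patched setting of Breuil--Hellmann--Schraen \cite{BHS}. Since $z$ is benign, the linearised crystalline Frobenius $\varphi^{f}$ on $D_{\mathrm{cris}}(r_z)$ has $n$ pairwise distinct eigenvalues, and so admits exactly $n!$ complete $\varphi^f$-stable flags, indexed by $\sigma \in S_n$. Each such refinement produces a triangulation of the $(\varphi,\Gamma)$-module of $r_z$ with a regular parameter $\delta_\sigma \in \That^{\reg}$, and the benign hypothesis (non-criticality of every refinement together with the ratio condition $p^{\pm f}$) ensures that each $(z,\delta_\sigma)$ lies in the locus of $X^{\tri}_\infty$ whose local geometry has been analysed in the references above. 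Moreover these are exactly the points of $X^{\tri}_\infty$ lying over $z$.

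For part (1), smoothness of $X^{\tri}_\infty$ at each $(z,\delta_\sigma)$ will follow from the structure theorem for the trianguline variety at regular non-critical points (Bella\"iche--Chenevier, Kedlaya--Pottharst--Xiao, Hellmann), as imported into the patched setting in \cite{BHS}. Concretely, the natural map from $X^{\tri}_\infty$ to weight space is smooth of the expected relative dimension at $(z,\delta_\sigma)$, with fibre the framed trianguline deformation space, itself smooth at such a point by a direct $(\varphi,\Gamma)$-module cohomology computation that uses non-criticality together with the regularity built into benignity.

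For part (2), let $V_\sigma \subset T_{(\Spf R_\infty)^{\rig}, z}$ be the image of the tangent space $T_{X^{\tri}_\infty, (z,\delta_\sigma)}$. I wish to show that $\sum_{\sigma \in S_n} V_\sigma = T_{(\Spf R_\infty)^{\rig}, z}$. My plan is to reduce this to the analogous statement for $R^\square$ in place of $R_\infty$. Indeed, $R_\infty$ is formally smooth over $R^{\loc} \cong R^\square \wtimes_{\OO} A$, where $A$ is a completed tensor product of the framed deformation rings at places away from $\pp$; the trianguline condition involves only the factor at $\pp$, so the tangent directions in $T_{(\Spf R_\infty)^{\rig}, z}$ coming from $A$ and from the formally smooth patching variables lie in every $V_\sigma$. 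What remains is to span the image of $V_\sigma$ inside $T_{(\Spf R^\square)^{\rig}, z}$ as $\sigma$ varies, which is the content of the local infinite fern: this is proved for $F = \Qp$ in \cite{Chenevier} and extended to general $F$ in \cite{Nakamura}, via a Bloch--Kato style Galois cohomology computation that compares the various trianguline subspaces of $H^1(G_F, \mathrm{ad}(r_z))$ and shows that, over the $n!$ refinements, they jointly fill out the ambient cohomology.

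The main obstacle I anticipate is the reduction step in part (2), namely verifying carefully that $X^{\tri}_\infty$ is, in a neighbourhood of $(z,\delta_\sigma)$, the product of the corresponding trianguline variety attached to $R^\square$ with the formally smooth factors coming from $A$ and from the patching variables, and in particular that the tangent directions arising from those smooth factors are genuinely captured by every $V_\sigma$. This compatibility is implicit in the construction of $X^{\tri}_\infty$ from $M_\infty$ carried out in \cite{BHS}; once it is spelled out, parts (1) and (2) both follow from the cited results.
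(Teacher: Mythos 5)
Your proposal is essentially the paper's argument. For part (1), the paper cites precisely \cite[Cor.\,2.12]{BHS} and \cite[Thm.\,2.6\,(iii)]{BHS}, which encapsulate the structure results at benign points that you describe (smoothness over weight space, non-criticality, and the fact that the fibre of $X^{\tri}_{\infty}$ over $z$ consists exactly of the $n!$ refinements). For part (2), the paper also appeals to Chenevier for $F=\Qp$ and to Nakamura for general $F$, after noting that the image of $T_{X^{\tri}_{\infty}, (z,\delta)}$ in $T_{(\Spf R_{\infty})^{\rig}, z}$ coincides with the subspace classifying trianguline deformations (citing the discussion in the proof of \cite[Thm.\,2.6]{BHS}); your explicit product-decomposition argument via $R_{\infty} \cong R^{\square} \wtimes_{\OO} A \wtimes$(patching variables) is the content of this identification, merely spelled out rather than packaged as a statement about trianguline deformation subspaces. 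The one small discrepancy: the paper's reference for the span statement for general $F$ is \cite[Thm.\,2.62]{Nakamura2} (the trianguline $B$-pair deformations paper), not \cite{Nakamura} (the Zariski density paper); otherwise the two arguments coincide.
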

\begin{proof}
The first claim follows by combining
Corollary 2.12 and Theorem~2.6~(iii)
of~\cite{BHS}. 
The second claim follows from \cite{Chenevier} in the case that $K 
= \Q_p$ and \cite[Thm.\,2.62]{Nakamura2} in general once we note that the image
of 
$T_{X^{\tri}_{\infty}, (z,\delta)}$
in $T_{(\Spf R_{\infty})^{\rig}, z}$ coincides with the closed subspace of 
$T_{(\Spf R_{\infty})^{\rig}, z}$
classifing trianguline deformations of $(z,\delta)$.  See e.g.\
the discussion of this in the proof of \cite[Thm.\,2.6]{BHS}.
\end{proof}

With the preceding results recalled,
we are now in a position to study the support of $M_{\infty}$.

If $\xi$ is an irreducible algebraic representation of $G$,
and if $\underline{k}$ is the system of labelled
Hodge--Tate weights corresponding to $\xi$,
then (as already recalled in the proof of Theorem~\ref{thm:support}),
we may form the $R_{\infty}[1/p]$-module $M_{\infty}(\xi)$
as in \cite{six-authors}, whose support is a union of some
number of irreducible components of $\Spec R_{\infty}^{\underline{k}}
[1/p]$, or, equivalently,
a union of some number of irreducible components of
$(\Spf R_{\infty}^{\underline{k}})^{\rig}$.
In particular, if $M_{\infty}(\xi)$ is non-zero,
then Proposition~\ref{prop:benign} implies that its support contains a
Zariski dense open subset of points $x$ for which $r_x$ is benign.

\begin{lem}
\label{lem:refinements}
If $\xi$ is an algebraic representation of $G$,
if $x$ is a point lying in the support of $M_{\infty}(\xi)$ for which
$r_x$ is benign, and if $(x,\delta)$ is a point of $X_{\infty}^{\tri}$
lying over~$x$, then $(x,\delta) \in X_{\infty}$.
\end{lem}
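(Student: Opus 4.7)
My plan is to reduce the statement to a computation with the locally analytic Jacquet module applied to the locally algebraic vectors of $\Pi_\infty$ at $x$, and then match refinements against characters in the Jacquet module using the benign hypothesis.

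First I would unwind what the hypothesis $x \in \supp M_\infty(\xi)$ gives us. By the construction of $M_\infty(\xi)$ in \cite{six-authors} and local-global compatibility, this means that $\Hom_G(\xi \otimes \pi_x, \Pi_\infty[\mm_x]) \neq 0$, where $\pi_x$ is the smooth $\GL_n(F)$-representation attached to $\WD(r_x)$ by classical local Langlands. Now the benign hypothesis pays off: condition (iii) of Definition \ref{defi:benign} forces $\pi_x$ to be an irreducible unramified principal series $\Ind_B^G \chi$ for some unramified character $\chi$ of $T$. Hence there is a nonzero $G$-equivariant embedding $\xi \otimes \Ind_B^G \chi \hookrightarrow \Pi_\infty^{R_\infty-\ana}[\mm_x]$, since locally algebraic vectors are locally analytic.

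Next I would apply the left-exact Jacquet module functor $J_B$. For the principal series $\Ind_B^G \chi$, the smooth Jacquet module is, by Bernstein--Zelevinsky / classical Jacquet theory, a direct sum $\bigoplus_{w \in W} \chi^w \cdot \delta_B$ (up to the standard normalizations), one character per Weyl element. Tensoring with the algebraic character $J_B(\xi)$ (the highest-weight line, extracted via Emerton's theory \cite{Jacquet-one}), we obtain a direct sum $\bigoplus_w \delta_w$ of locally algebraic characters of $T$ that inject into $J_B(\Pi_\infty^{R_\infty-\ana})[\mm_x]$. By the construction of $\cM_\infty$ as the sheaf attached to $J_B(\Pi_\infty^{R_\infty-\ana})'$, each point $(x, \delta_w) \in (\Spf R_\infty)^{\rig} \times \That$ then lies in $X_\infty$.

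The final step, and the key payoff of the full benign hypothesis, is to check that the finite set $\{(x, \delta_w)\}_{w \in W}$ produced above coincides with the full set of points of $X_\infty^{\tri}$ over $x$. By the construction of $X_\infty^{\tri}$ recalled above Definition \ref{defi:benign}, points of $X_\infty^{\tri}$ over a benign crystalline $x$ correspond bijectively to orderings of the crystalline Frobenius eigenvalues (equivalently, to refinements): condition (i) supplies $n^f!$ distinct orderings, and condition (ii) (non-criticality of all refinements) ensures that each such refinement actually gives a point of $X_\infty^{\tri}$ that is detected in the locally algebraic Jacquet module via the classical recipe matching Frobenius eigenvalues with unramified characters in $J_B(\pi_x)$ twisted by $J_B(\xi)$. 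Thus every $\delta$ with $(x,\delta) \in X_\infty^{\tri}$ equals some $\delta_w$, and we conclude $(x, \delta) \in X_\infty$.

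The main obstacle will be step three, namely identifying explicitly that the characters $\delta_w$ produced by the Jacquet module of $\xi \otimes \Ind_B^G \chi$ are precisely the parameters of the triangulations of $r_x$. This identification is the ``classical/companion point'' dictionary; fortunately, all the bookkeeping (normalizations, twists by $\delta_B$, correspondence between refinements and orderings) has already been set up in \cite[\S 2-3]{BHS}, and the non-criticality assumption (ii) is exactly what is needed to guarantee that no refinement is missed in passing from the Galois side to the Jacquet module side.
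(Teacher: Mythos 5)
Your proof follows essentially the same approach as the paper: use local-global compatibility to exhibit the locally algebraic subrepresentation $\pi_p\otimes\xi$ inside $\Pi(x)^{\ana}$, apply the left-exact Jacquet functor to extract the $n!$ characters coming from the Weyl-group orbit, and match these against the parameters $\delta$ of triangulations via \cite[Cor.\,2.12]{BHS} together with the identification of fibres of $\cM_\infty$ with Jacquet-module eigenspaces from \cite[Prop.\,3.7]{BHS}. One small slip: the number of orderings furnished by condition~(i) is $n!$, not $n^f!$ (there are $n$ eigenvalues of the linearized $\varphi^f$, and $W=S_n$), but this does not affect the argument.
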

\begin{proof}
We see from \cite[Cor.\,2.12]{BHS} that the $\delta$ is necessarily
a triangulation of $r_x$.  
The fibre of $\cM_{\infty}$ over $(x,\delta)$ is dual to the eigenspace
$J_B\bigl(\Pi(x)^{\ana}\bigr)^{T = \delta}$ in the Jacquet module of $\Pi(x)$, where $\ana$ denotes the subspace
of locally analytic vectors in the Banach space $\Pi(x)$, see \cite[Prop.\,3.7]{BHS}.
Thus in order to show that $(x,\delta)$ lies in $X_{\infty}$,
it suffices to show that for every parameter $\delta$ describing
a triangulation of $r_x$, the corresponding eigenspace in the Jacquet
module of $\Pi(x)^{\ana}$ is non-zero.

To see this, note that local-global compatibility shows that $\Pi(x)$
contains a locally algebraic representation $\pi_p \otimes \xi,$
with $\pi_p$ the parabolic induction of an unramified character, see \cite[Thm.\,4.35]{six-authors}. Note that the condition (iii) in Definition 
\ref{defi:benign} ensures that $\pi_p$ is irreducible and hence generic, so that the conditions of \cite[Thm.\,4.35]{six-authors} are satisfied. 
The representation $\pi_p \otimes \xi$ equipped with the finest locally convex topology is a locally analytic subrepresentation
of $\Pi(x)^{\ana}$. Since the Jacquet-functor is left exact we have an inclusion $J_B(\pi_p \otimes \xi)\subset J_B\bigl(\Pi(x)^{\ana}\bigr)$.
Moreover,  \cite[Prop.\,4.3.6]{Jacquet-one} implies that $J_B(\pi_p \otimes \xi)\cong (\pi_p)_N \otimes \xi^N$, where $N$ is the unipotent radical 
of $B$. Hence, $J_B(\pi_p\otimes\xi)$ consists of $n!$ characters which fill out
all the necessary eigenspaces.   (These correspond to the $n!$ possible
orderings of the Frobenius eigenvalues, which in turn
match with all the possible triangulations.)
\end{proof}

\begin{theorem}
The Zariski closure, in $(\Spf R_{\infty})^{\rig}$, of the
union of the supports of the modules $M_{\infty}(\xi)$, 
as $\xi$ ranges over all the irreducible algebraic representations
of $G$, is equal to a non-empty union of components
of $(\Spf R_{\infty})^{\rig}$.
\end{theorem}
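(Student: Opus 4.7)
The plan is to show, for $Z:=\overline{\bigcup_\xi \supp M_\infty(\xi)}$, that every irreducible component $Y$ of $Z$ is in fact an irreducible component of $(\Spf R_\infty)^{\rig}$; non-emptiness is clear since at least one $M_\infty(\xi)$ is non-zero. The strategy is to produce, at a Zariski dense collection of points $x\in Y$, a tangent-space equality $T_{Z,x}=T_{(\Spf R_\infty)^{\rig},x}$ via the infinite-fern argument, and then to choose $x$ generically so that this forces $Y$ to equal the unique component of $(\Spf R_\infty)^{\rig}$ through $x$.

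First I would establish the density statement. Set $B:=\bigcup_\xi B_\xi$, where $B_\xi\subset \supp M_\infty(\xi)$ is the Zariski dense open benign locus supplied by Proposition \ref{prop:benign} (applicable since $\supp M_\infty(\xi)$ is a union of irreducible components of $(\Spf R_\infty^{\underline{k}})^{\rig}$ by \cite[Lem.\,4.17]{six-authors}). Since each $B_\xi$ is dense in $\supp M_\infty(\xi)$, one has $\overline{B}=Z$. For any irreducible component $Y$ of $Z$, the subset $Y\setminus\bigcup_{Y'\neq Y}Y'$ is a non-empty Zariski open subset of $Z$ contained in $Y$, and therefore meets the dense set $B$; in fact $B\cap Y$ is Zariski dense in $Y$.

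Next I would execute the tangent-space computation at an arbitrary benign point $x\in B\cap Y$. For every parameter $\delta$ describing a triangulation of $r_x$, Lemma \ref{lem:refinements} guarantees $(x,\delta)\in X_\infty$; Theorem \ref{thm:infinite fern}(1) says $(x,\delta)$ is a smooth point of $X_\infty^{\tri}$; hence by Theorem \ref{thm:BHS main} the unique irreducible component of $X_\infty^{\tri}$ through $(x,\delta)$ lies entirely in $X_\infty$, and its image in $(\Spf R_\infty)^{\rig}$ is contained in $\supp M_\infty$. By the capture-theoretic identity
$$\operatorname{Ann}_{R_\infty}(M_\infty)=\bigcap_{\xi}\operatorname{Ann}_{R_\infty}(M_\infty(\xi))$$
(an application of Lemma \ref{faithful} via Proposition \ref{capture_algebraic}), we have $\supp M_\infty = Z$, so this image lies in $Z$. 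Consequently the image of $T_{X_\infty^{\tri},(x,\delta)}$ in $T_{(\Spf R_\infty)^{\rig},x}$ lies in $T_{Z,x}$, and summing over all $\delta$ via Theorem \ref{thm:infinite fern}(2) yields $T_{Z,x}=T_{(\Spf R_\infty)^{\rig},x}$.

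To conclude, I would pick $x$ to be a smooth point of $Y$ in $B\cap Y$ that avoids every other component of $Z$; such points are Zariski dense in $Y$. At such $x$ one has $T_{Z,x}=T_{Y,x}$ and $\dim T_{Y,x}=\dim Y$, while trivially $\dim T_{(\Spf R_\infty)^{\rig},x}\ge \dim_x (\Spf R_\infty)^{\rig}\ge \dim Y$. The chain of inequalities collapses, so $(\Spf R_\infty)^{\rig}$ is smooth at $x$ with $\dim_x(\Spf R_\infty)^{\rig}=\dim Y$; hence $Y$ equals the unique irreducible component of $(\Spf R_\infty)^{\rig}$ through $x$. The main obstacle is the verification, at a benign point lying in $\supp M_\infty(\xi)$, that \emph{every} triangulation $\delta$ of $r_x$ furnishes a point $(x,\delta)\in X_\infty$ — the content of Lemma \ref{lem:refinements} — which requires producing the requisite Jacquet-module eigenvectors from the locally algebraic vectors in $\Pi(x)$ via local-global compatibility; once this is in place, the rest of the argument is local dimension bookkeeping.
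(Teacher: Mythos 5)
Your proof is correct, and it follows the same overall ``infinite-fern plus tangent-space'' strategy as the paper: density of benign points, Lemma~\ref{lem:refinements} to place all $(x,\delta)$ in $X_\infty$, smoothness via Theorem~\ref{thm:infinite fern}(1), Theorem~\ref{thm:BHS main} to realize the local piece of $X^{\tri}_\infty$ inside $X_\infty$, and Theorem~\ref{thm:infinite fern}(2) to span the ambient tangent space. The one genuine point of departure is how you establish that the relevant tangent directions land in $T_{Z,x}$. The paper proves the inclusion $\pi(X_\infty)\subseteq Z$ by invoking \cite[Thm.\,3.19]{BHS}: strongly classical points are Zariski dense in $X_\infty$, each is classical, and each classical point lies over $\supp M_\infty(\xi)$ for the appropriate $\xi$, so $\pi^{-1}(Z)$ is a closed subset of $X_\infty$ containing a dense set and hence is all of $X_\infty$. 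You instead observe that $X_\infty$ is supported over $\supp M_\infty$ (because the Jacquet module, being a subquotient built from $\Pi_\infty=\Hom^{\cont}_\OO(M_\infty,L)$, is killed by $\Ann_{R_\infty}(M_\infty)$), and that the capture-theoretic identity from Proposition~\ref{capture_algebraic} together with Lemma~\ref{caught_it} (applied to $M_\infty$ over $R_\infty/\Ann(M_\infty)$, so that the action is faithful; your reference to Lemma~\ref{faithful} should really be to this corollary) gives $\supp M_\infty=Z$. This is a cleaner route that sidesteps the classicality result of \cite{BHS}, though it quietly relies on the fact that the rigid-analytic Zariski closure of $\bigcup_\xi\supp M_\infty(\xi)$ coincides with $V(\bigcap_\xi\mathfrak a_\xi)^{\rig}$; that compatibility is also implicitly used by the paper (in the proof of Theorem~\ref{thm:support}), so it is a fair appeal. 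Your concluding dimension count at a smooth point of $Y$ avoiding other components is a correct, slightly more arithmetic rephrasing of the paper's observation that a closed immersion of spaces regular of the same dimension at a point is a local isomorphism there.
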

\begin{proof}
To ease notation, write $Y := (\Spf R_{\infty})^{\rig}$,
and let $Z \hookrightarrow Y$ the Zariski closure under consideration in the
statement of the theorem.
We note that $M_{\infty}(\xi)$ is non-zero for at least one choice of $\xi$,
so that $Z$ is non-empty.
Since $Z$ is furthermore a reduced rigid analytic space over a field
of characteristic zero, its smooth locus $Z^{\sm}$ is a non-empty and open subset.
Thus the correspondence $V \mapsto U := V \cap Z^{\sm}$
induces a bijection between the irreducible components $V$ of $Z$ 
and the irreducible (or, equivalently, connected) components $U$ of $Z^{\sm}$; one recovers $V$  from
$U$ by taking the Zariski closure of $U$ in $Z$ (or, equivalently, in~$Y$).

We next note that the points of $X_{\infty}$ of the form $(z,\delta)$
with $z$ lying in the support of one of $M_{\infty}(\xi)$
are actually Zariski dense
in $X_{\infty}$. Indeed,
this follows from \cite[Thm.\,3.19]{BHS}, 
since  any $(z,\delta)\in X_{\infty}$, which is  strongly classical in the sense of \cite[Def.\,3.15]{BHS},  is classical 
in the sense of  \cite[Def.\,3.17]{BHS} and hence will have $z$ lying in the support of $M_{\infty}(\xi)$, where the highest weight of $\xi$ corresponds to the algebraic part of $\delta$.
This implies that the image of $X_{\infty}$ in $Y$ under the composition $\pi: X_{\infty}\subset Y\times \widehat{\mathrm{T}} \rightarrow Y$ is contained in $Z$, as $\pi^{-1}(Z)$ is a closed subset of $X_{\infty}$ containing a dense set of points.

As noted above, 
if $\underline{k}$ is the system of labelled
Hodge--Tate weights corresponding to an algebraic representation $\xi$,
then the support of $M_{\infty}(\xi)$
is a union of some number of components of
$(\Spf R_{\infty}^{\underline{k}})^{\rig}$,
each of which contains a Zariski dense subset of points
$z$ for which $r_z$ is benign.  Thus $Z$ is equal to the Zariski
closure of the set $B$ of such points, i.e.\ 
the set $B$ of points $z$ lying in the support of some $M_{\infty}(\xi)$,
and for which $r_z$ is benign.
Thus if $V$ is an irreducible component of~$Z$,
and if $U := V \cap Z^{\sm}$ (which, as we noted above, is Zariski
dense in~$V$),
then we find that the $B \cap U$ is Zariski dense in~$U$; in particular,
it is non-empty.

Choose a point $z \in B \cap U$.
If $(z,\delta)$ is any point of $X^{\tri}_{\infty}$
lying over~$z$,
the point $(z,\delta)$ lies in the smooth
locus of $X^{\tri}_{\infty}$, by Theorem~\ref{thm:infinite fern}~(1).
Lemma~\ref{lem:refinements} shows 
that any such point $(z,\delta)$
in fact lies in $X_{\infty}$. 
We then conclude from
Theorem~\ref{thm:BHS main}
that $(z,\delta)$ is a smooth point of $X_{\infty}$, and that the closed
immersion
$X_{\infty} \hookrightarrow X^{\tri}_{\infty}$ is an isomorphism
at any such point $(z,\delta)$.

Combining this last conclusion with the inclusion
$\pi(X_{\infty}) \subseteq Z$ that was proved above,
we thus find that $T_{Z,z} = T_{U,z}$ contains 
the image of the tangent space $T_{X^{\tri}_{\infty},(z,\delta)}$,
for each
choice of $(z,\delta)$ lying over $z$.
Since Theorem~\ref{thm:infinite fern}~(2) shows that
the images of the various tangent spaces $T_{X^{\tri}_{\infty},(z,\delta)}$ 
span $T_{Y,z}$,
we find that $T_{U,z} = T_{Y,z}$, and hence that $U$ and $Y$ coincide
locally at $z$.   This implies that the Zariski closure
$V$ of~$U$, which is an irreducible component of $Z$, is also
an irreducible component of~$Y$, as required.
\end{proof}

\end{document}